\patchcmd\Gread@eps{\@inputcheck#1 }{\@inputcheck"#1"\relax}{}{}
\newtheorem{theorem}{Theorem}[section]
\newtheorem{assumption}{Assumption}[section]
\newtheorem{remark}{Remark}[section]
\newtheorem{definition}{Definition}[section]
\newtheorem{lemma}[theorem]{Lemma}
\newtheorem{proposition}[theorem]{Proposition}
\definecolor{light-gray}{gray}{0.95}
\def\centerarc[#1](#2)(#3:#4:#5){\draw[#1] ($(#2)+({#5*cos(#3)},{#5*sin(#3)})$) arc (#3:#4:#5);}
\numberwithin{equation}{section}
\numberwithin{figure}{section}
\newcommand{\mc}[1]{{\mathcal #1}}
\newcommand{\<}{\big\langle}
\renewcommand{\>}{\big\rangle}
\renewcommand{\epsilon}{\varepsilon}
\newcommand{\R}{\mathbb R}
\newcommand{\Z}{\mathbb Z}
\newcommand{\N}{\mathbb N}
\renewcommand{\P}{\mathbb P}
\newcommand{\E}{\mathbb E}
\title[Stationary fluctuations for exclusion processes]{Stationary fluctuations for the WASEP with long jumps and infinitely extended reservoirs}
\author{Wenxuan Chen}
\address{School of Statistics and Data Science, Ningbo University of Technology, Ningbo, China.}
\email{wxchen@nbut.edu.cn}
\author{Linjie Zhao}
\address{School of Mathematics and Statistics, Huazhong University of Science \& Technology, Wuhan, China.}
\email{linjie\_zhao@hust.edu.cn}
\thanks{Acknowledgement. Zhao thanks the financial support from  the National Natural Science Foundation of China with grant numbers 12401168, 12371142 and 11971038, and the Fundamental Research Funds for the Central Universities in China.}
\keywords{exclusion process; long jumps; stationary fluctuations; stochastic Burgers equation.}
\begin{document}

\begin{abstract}
We study a weakly asymmetric exclusion process with long jumps and with infinitely many extended reservoirs. We prove that the stationary fluctuations of the process are governed by the generalized Ornstein-Uhlenbeck process or the stochastic Burgers equation with Dirichlet boundary conditions depending on the strength of the asymmetry of the dynamics.
\end{abstract}

\maketitle

\section{Introduction}

The Kardar-Parisi-Zhang equation was introduced in \cite{kardar1986dynamic} to describe the universal fluctuations of growing interfaces. Due to the non-linearity of the equation, it is ill-posed and has attracted a lot of attention in the mathematical society.   Different methods have been  proposed to solve the equation:  the rough path theory developed by \cite{hairer2013solving}, derivation of  the Cole-hopf solution from the weakly asymmetric simple exclusion process (WASEP) by Bertini and Giacomin \cite{bertini1997stochastic}, and derivation of  the stationary energy solution from the WASEP by Gon{\c c}alves and Jara \cite{gonccalves2014nonlinear}.  Recently, Matetski, Quastel and Remenik \cite{Mateski2021KPZ} constructed a Markov process, the KPZ fixed point, via the limit of rescaled height function for TASEP. This process is believed to be the universal limit for models in KPZ universality class.
It was shown in \cite{Quastel2023longKPZ}  that finite range asymmetric exclusion processes converge to the KPZ fixed point. A natural extension is to consider the  Kardar-Parisi-Zhang equation  with boundary conditions. In \cite{corwin2018open}, Corwin and Shen derived the KPZ equation on the unit interval with Neumann boundary conditions by considering the height functions of the asymmetric exclusion process on an interval.  Gon{\c c}alves \emph{et al.} \cite{gonccalves2020derivation} derived the stochastic Burgers equation, which can be roughly viewed as the gradient of the Kardar-Parisi-Zhang equation, with Dirichlet boundary conditions  from the boundary driven WASEP. See \cite{corwin2012kpzun,corwin2022some,quastel2011kpz} and references therein for an excellent review on this topic.

The aim of this article is to derive the stochastic Burgers equation from other interacting particle systems. Precisely, we study the boundary driven WASEP with long jumps. Recently, particle systems with long jumps and infinitely extended reservoirs have been extensively investigated.  Hydrodynamic limits for such models are closely related to the fractional Laplacian with various boundary conditions, see \cite{bernardin2017fractional, bernardin2019slow,bernardin2021microscopic} and references therein.  For fluctuations, Bernardin \emph{et al.} \cite{bernardin2022equilibrium} studied a symmetric exclusion process with long jumps and infinitely extended reservoirs in the diffusive regime, where they derived generalized Ornstein-Uhlenbeck processes with various boundary conditions. 

The dynamics we considered is composed of two parts: the first one is the same as in \cite{bernardin2022equilibrium}, where particles jump on the segment $\Lambda_n = \{1,2,\ldots,n-1\}$ according to some symmetric transition rate  with long jumps subject to the exclusion rule, and the boundary of $\Lambda_n$ acts as infinitely extended particle reservoirs.  Precisely, the symmetric jump rate is defined as $s_\alpha (x) = |x|^{-1-\alpha}$ for some $\alpha > 0$. On top of the symmetric dynamics, we add an asymmetric dynamics with jump rate given by $p_{\gamma} (x) = |x|^{-1-\gamma} \mathbf{1}_{\{x > 0\}}$ for $\gamma > 0$.  We assume the particle densities of the reservoirs are $1/2$.  See Figure \ref{fig: dynamics} for dynamics of the model. We mainly focus on the case $\alpha > 2$. Let the symmetric dynamics accelerate by $n^2$, and the asymmetric dynamics by $n^\theta$ for some $\theta > 0$.  Under Assumption \ref{assump: parameters} below, we prove that if $\theta < 3/2$, then the limit of the density fluctuation fields is given by the Ornstein-Uhlenbeck process with Dirichlet boundary conditions, while if $\theta = 3/2$, then the limit is given by the stochastic Burgers equation with Dirichlet boundary conditions. 

The main novelties of the article are twofold. First, inspired by the work in \cite{bernardin2022equilibrium},  we proposed a different definition to the solution of the stochastic Burgers equation  with Dirichlet boundary conditions (see Definition \ref{def burgers 2}), which is equivalent to the one in \cite{gonccalves2020derivation} (see Definition \ref{def burgers}). Moreover, Definition \ref{def burgers 2} is quite natural from the model we considered. Second, the critical step of the proof is to prove the second-order Boltzmann-Gibbs principle, see Proposition \ref{pro: A4}, which was first introduced in  \cite{gonccalves2014nonlinear} and has been shown to hold for a large class of interacting particle systems, see \cite{sethuraman2016microscopic,gonccalves2018density,diehl2017kardar} for example.  Compared with previous works, 
 the boundary terms are more involved in this model due to the long jumps, and we need to take advantage of the test functions.

We finally remark that it is challenging for us to consider the case $\alpha < 2$, from which we expect to derive fractional stochastic Burgers equation with Dirichlet boundary conditions.  One of the problems for us is that we cannot find a proper space to define the solution to the limiting SPDE. 

The article is organized as follows.  We state the model and results in Section \ref{sec model}.  The proof is outlined in Section \ref{sec proof}. Section \ref{sec: pf second order boltzmann gibbs} is devoted to the proof of the second order Boltzmann-Gibbs principle, \emph{i.e.}, Proposition \ref{pro: A4}, which is the main step of the proof. Finally, we prove the tightness of the fluctuation field and the uniqueness to the solution of the limiting SPDE in Sections \ref{sec: tightness} and \ref{sec: pf definitions equivalent} respectively.

\section{Model and results}\label{sec model}
  
The state space of the exclusion process is $\mc{X}_n = \{0,1\}^{\Lambda_n}$, where $\Lambda_n = \{1,2,\ldots,n-1\}$. Fix parameters $\theta, \gamma, \alpha > 0$. The generator of the process is 
\[L_n = n^2 L_{s} + n^\theta L_a.\]
Above, for $f: \mc{X}_n \rightarrow \R$,
\begin{align*}
L_{s} f(\eta) &= \sum_{x,y \in \Lambda_n}  s_{\alpha} (x-y)[f(\eta^{x,y}) -f(\eta)] \\
&+ \frac{1}{2}\sum_{x \in \Lambda_n, y \notin \Lambda_n}  s_{\alpha} (x-y) [f(\eta^{x}) -f(\eta)],\\
L_{a} f(\eta) &= \sum_{x,y \in \Lambda_n}  p_{\gamma} (y-x)  \eta_x (1-\eta_y)[f(\eta^{x,y}) -f(\eta)] \\
&+ \frac{1}{2}\sum_{x \in \Lambda_n, y \leq 0}  p_{\gamma} (x-y) (1-\eta_x) [f(\eta^{x}) -f(\eta)]\\
&+ \frac{1}{2}\sum_{x \in \Lambda_n, y \geq n}  p_{\gamma} (y-x) \eta_x [f(\eta^{x}) -f(\eta)],
\end{align*}
where $\eta^{x,y}$ is the configuration obtained from $\eta$ after swapping the values of $\eta_x$ and $\eta_y$,  and $\eta^x$ is the one after flipping the value of $\eta_x$,
\[\eta^{x,y}_z = \begin{cases}
\eta_x, \quad &\text{if } z = y,\\
\eta_y, \quad &\text{if } z = x,\\
\eta_z, \quad &\text{otherwise},
\end{cases} \quad \eta^{x}_z = \begin{cases}
1- \eta_x, \quad &\text{if } z = x,\\
\eta_z, \quad &\text{otherwise}.
\end{cases} \]
The jump rates are given by
\[s_{\alpha} (x) = |x|^{-1-\alpha} \mathbf{1}_{\{x \neq 0\}}, \quad p_{\gamma} (x) = |x|^{-1-\gamma} \mathbf{1}_{\{x > 0\}}.\]
The factor $1/2$ in the definition of the generators corresponds to the particle density of the reservoirs.  See Figure \ref{fig: dynamics} for dynamics of the model.

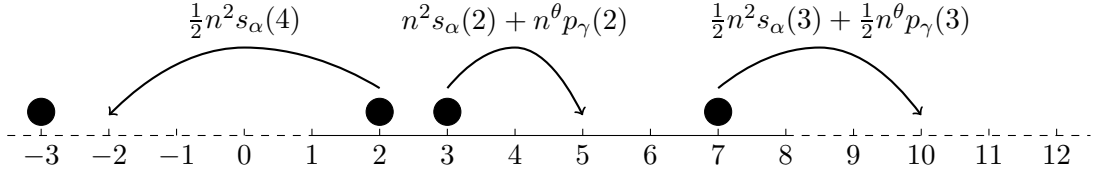
\begin{figure}[htbp]
	\begin{tikzpicture}[scale=0.9]
			\draw[-] (1,0) -- (8,0);
			\draw[dashed] (-3.5,0) -- (1,0);
			\draw[dashed] (8,0) -- (12.5,0);
			\foreach \x in {-3,-2,-1,0,1,2,3,4,5,6,7,8,9,10,11,12}
			\draw (\x,0) node[below]{$\x$}-- (\x,.1);
			
			\foreach \x in {-3,2,3,7}
			\draw[fill=black] (\x,.35)  circle (.2);
			
			\draw[->,thick] (2,.7) parabola bend (0,1.3) (-2,.3);
			\draw[->,thick] (3,.7) parabola bend (4,1.3) (5,.3);
			\draw[->,thick] (7,.7) parabola bend (8.5,1.3) (10,.3);
			
			\node at (0,1.7) {$\tfrac{1}{2}n^2 s_\alpha (4)$};
			\node at (4,1.7) {$n^2 s_\alpha (2) + n^\theta p_{\gamma} (2)$};
				\node at (8.8,1.7) {$\tfrac{1}{2}n^2 s_\alpha (3) + \tfrac{1}{2}n^\theta p_{\gamma} (3)$};
	\end{tikzpicture}
\caption{Dynamics of the model. Here, 	$n=9$. The sites on the dashed line are extended reservoirs.}
\label{fig: dynamics}
\end{figure}

\begin{remark}
The reservoir density $1/2$ is important since under the Bernoulli product measure with constant density $1/2$, the center of the mass travels at speed zero. Otherwise, we are not aware of how to choose the proper space of test functions in Subsection \ref{subsec: spde}. 
\end{remark}

We make the following assumptions on the above parameters throughout the article.

\begin{assumption}\label{assump: parameters} $\alpha > 2$, $\theta < \gamma \wedge 2$.
\end{assumption}

\begin{remark}
		For $0 < \alpha < 2$ or $\theta = \gamma$, the fractional Laplacian would appear in the limiting SPDE. In this case, we cannot find a proper space to define the solution to the fractional Ornstein-Uhlenbeck process or the fractional stochastic Burgers equation with boundary conditions. That is the reason why we  made the above assumptions on the parameters.
\end{remark}

For $\rho \in (0,1)$, let $\nu_\rho$ be the product measure on the state space $\mathcal{X}_n$ with constant particle density $\rho$,
\[\nu_\rho (\eta_x = 1) = \rho, \quad x \in \Lambda_n.\]
One could check directly that $\nu_{1/2}$ is reversible for the generator $L_s$, and is invariant for $L_a$.

Denote by $\eta(t) \equiv \eta^n (t)$ the Markov process with generator $L_n$ starting from the measure $\nu_{1/2}$. Let $\mathbb{P} \equiv \mathbb{P}^n$ be the probability measure on the path space $\mathcal{D} ([0,\infty); \mathcal{X}_n)$ induced by the process $\eta (t)$, and by $\E \equiv \E^n$ the corresponding expectation.

\subsection{The limiting SPDE}\label{subsec: spde} In this subsection, we rigorously define the solution to the Ornstein-Uhlenbeck process and the stochastic Burgers equation with Dirichlet boundary conditions. Before that, we first introduce the space of test functions.  Denote $\N := \{0,1,2,\ldots\}$. Define
\begin{align*}
\mathcal{S} &= \left\{ H\in \mathcal{C}^\infty([0,1]): H^{(i)}(0) =  H^{(i)}(1) = 0, \;\forall i\in\mathbb{N} \right\},\\
\mathcal{S}_{\rm Dir} &= \left\{ H\in \mathcal{C}^\infty([0,1]): H^{(2i)}(0) =  H^{(2i)}(1) = 0, \;\forall i\in\mathbb{N} \right\},\\
\mathcal{S}_{\rm Neu} &= \left\{ H\in \mathcal{C}^\infty([0,1]): H^{(2i+1)}(0) =  H^{(2i+1)}(1) = 0, \;\forall i\in\mathbb{N} \right\}.
\end{align*}
We equip the above spaces with the family of seminorms $\{\|H^{(i)}\|_{L^\infty( [0,1])}, \; i \in \N\}$ and denote respectively by  $\mathcal{S}^\prime$, $\mathcal{S}_{\rm Dir}^\prime$, $\mathcal{S}_{\rm Neu}^\prime$ their topological duals. For $H,G \in L^2 ([0,1])$, denote 
\[\<H,G\>:= \int_0^1 H(u) G(u) du.\]

\subsubsection{The Ornstein-Uhlenbeck process with Dirichlet boundary conditions}

We first define the solution of the following Ornstein-Uhlenbeck process with Dirichlet boundary conditions:
\begin{equation}\label{ou}
\partial_t \mathcal{Y}_t = A \Delta \mathcal{Y}_t  + \sqrt{D} \nabla \Dot{\mathcal{W}_t},
\end{equation}
where $A,D > 0$ are two parameters and $\Dot{\mathcal{W}_t}$ is a standard $\mathcal{S}_{\rm Neu}^\prime$-valued white noise.

\begin{definition}[The first definition of the solution to \eqref{ou}]\label{def ou}
We say $\big\{\mathcal{Y}_t, t \in [0,T]\big\} \in \mathcal{C} \big([0,T]; \mathcal{S}_{\rm Dir}^\prime\big)$ is a stationary solution of \eqref{ou} if it satisfies the following conditions:
\begin{enumerate}
    \item for any $t \in [0,T]$, for any $H,G \in \mathcal{S}_{\rm Dir}$,
    \begin{equation}\label{stationary def}
    \E [\mathcal{Y}_t (H)\mathcal{Y}_t (G)] = \frac{D}{2A} \<H,G\>.
    \end{equation}
    \item for any $H \in \mathcal{S}_{\rm Dir}$,
    \[\mathcal{M}_t (H) := \mathcal{Y}_t (H) - \mathcal{Y}_0 (H) - A \int_0^t \mathcal{Y}_s (\Delta H) ds\]
    is a continuous martingale with quadratic variation 
    \[\<\mathcal{M} (H)\>_t = t D \|\nabla H\|_{L^2 ([0,1])}^2.\]
\end{enumerate}
\end{definition}

\begin{remark}\label{ou unique}
We refer the readers to \cite[Theorem 2.13]{bernardin2022equilibrium} or \cite[Proposition 3.1]{gonccalves2020derivation} for the uniqueness of the solution to \eqref{ou}.
\end{remark}

In the following, we shall use an alternative definition of the solution to \eqref{ou}.  The equivalence of the two definitions are proved in \cite[Theorem 2.13]{bernardin2022equilibrium}. For $\varepsilon > 0$ and $u \in [0,1]$, define $\iota_{\varepsilon,u}: [0,1] \rightarrow \R$ as
\[\iota_{\varepsilon,u} (v) = \begin{cases}
\varepsilon^{-1} \mathbf{1} \{u \leq v < u+\varepsilon\} \quad &\text{if } \;0 \leq u < 1-\varepsilon,\\
\varepsilon^{-1} \mathbf{1} \{u -\varepsilon \leq v < u\} \quad &\text{if } \;1-\varepsilon \leq u \leq  1.
\end{cases}\]

\begin{definition}[The second definition of the solution to \eqref{ou}]\label{def ou 2}
We say $\big\{\mathcal{Y}_t, t \in [0,T]\big\} \in \mathcal{C} \big([0,T]; \mathcal{S}^\prime\big)$ is a stationary solution of \eqref{ou} if it satisfies the following conditions:
\begin{enumerate}
    \item the process $\mathcal{Y}_t$ is stationary in the sense of \eqref{stationary def}.
    \item for any $H \in \mathcal{S}$,
    \[\mathcal{M}_t (H) := \mathcal{Y}_t (H) - \mathcal{Y}_0 (H) - A \int_0^t \mathcal{Y}_s (\Delta H) ds\]
    is a continuous martingale with quadratic variation 
    \[\<\mathcal{M} (H)\>_t = t D \|\nabla H\|_{L^2 ([0,1])}^2.\]
    \item for any $u \in \{0,1\}$,
    \begin{equation}\label{ou bc}
    \lim_{\varepsilon \rightarrow 0} \E \Big[ \sup_{0 \leq t \leq T} \Big( \int_0^t \mathcal{Y}_s (\iota_{\varepsilon,u}) ds\Big)^2\Big] = 0.
    \end{equation}
\end{enumerate}
\end{definition}

\begin{remark}
Note that the space of test functions in Definitions \ref{def ou} and \ref{def ou 2} are different. In Definition \ref{def ou}, the information of the boundary conditions is contained in the space of test functions, while in Definition \ref{def ou 2}, it is contained in Eqn. \eqref{ou bc}. 
\end{remark}

\subsubsection{The stochastic Burgers equation with Dirichlet boundary conditions}

Next, we define the solution of the following stochastic Burgers equation with Dirichlet boundary conditions:
\begin{equation}\label{burgers}
\partial_t \mathcal{Y}_t = A \Delta \mathcal{Y}_t  + B \nabla \mathcal{Y}_t^2 + \sqrt{D} \nabla \Dot{\mathcal{W}_t},
\end{equation}
where $A,B,D > 0$ and $\Dot{\mathcal{W}_t}$ is a standard $\mathcal{S}_{\rm Neu}^\prime$-valued white noise. 

\begin{definition}[The first definition of the solution to \eqref{burgers}]\label{def burgers}
We say the process  $\big\{\mathcal{Y}_t, t \in [0,T]\big\}$ in $ \mathcal{C} \big([0,T]; \mathcal{S}_{\rm Dir}^\prime\big)$ is a stationary energy solution of \eqref{burgers} if it satisfies the following conditions:
\begin{enumerate}
\item the process $\mathcal{Y}_t$ is stationary in the sense of \eqref{stationary def}.
\item the process $\mathcal{Y}_t$ satisfies the following energy estimate:  there exists some finite constant $C$ such that for any $H \in \mathcal{S}_{\rm Dir}$, for any $\delta < \varepsilon \in (0,1)$ and any $0 \leq s < t \leq T$,
\[\E \Big[ \Big( \mathcal{A}_{s,t}^\varepsilon (H) - \mathcal{A}_{s,t}^\delta (H) \Big)^2\Big] \leq C (t-s) \varepsilon \|\nabla H\|_{L^2 ([0,1])}^2,\]
where
\begin{equation}
\mathcal{A}_{s,t}^\varepsilon (H) := - \int_s^t \int_0^1 \Big( \mathcal{Y}_r \big( \iota_{\varepsilon,u}\big)\Big)^2 \nabla H (u) du \,dr. 
\end{equation}
\item for any $H \in \mathcal{S}_{\rm Dir}$,
    \[\mathcal{M}_t (H) := \mathcal{Y}_t (H) - \mathcal{Y}_0 (H) - A \int_0^t \mathcal{Y}_s (\Delta H) ds - B \mathcal{A}_t (H)\]
    is a continuous martingale with quadratic variation 
    \[\<\mathcal{M} (H)\>_t = t D \|\nabla H\|_{L^2 ([0,1])}^2,\]
    where $\mathcal{A}_t (H)$ is the $L^2 (\P)$-limit of $\mathcal{A}_{0,t}^\varepsilon (H)$ as $\varepsilon \rightarrow 0$.
\item the reversed process $\{\mathcal{Y}_{T-t}, 0 \leq t \leq T\}$ satisfies condition (3) with $B$ replaced by $-B$.
\end{enumerate}
\end{definition}

\begin{remark}
We refer the readers to \cite[Theorem 3.3]{gonccalves2020derivation} for the uniqueness of the stationary energy solution to \eqref{burgers}.
\end{remark}

Similar to Definition \ref{def ou 2}, below we give an alternative definition of the stationary energy  solution to \eqref{burgers}.

\begin{definition}[The second definition of the solution to \eqref{burgers}]\label{def burgers 2}
We say the process $\big\{\mathcal{Y}_t, t \in [0,T]\big\}$ in $\mathcal{C} \big([0,T]; \mathcal{S}^\prime\big)$ is a stationary energy solution of \eqref{burgers} if it satisfies the following conditions:
\begin{enumerate}
\item the process $\mathcal{Y}_t$ is stationary in the sense of \eqref{stationary def}.
\item the process $\mathcal{Y}_t$ satisfies the following energy estimate:  there exists some finite constant $C$ such that for any $H \in \mathcal{S}$, for any $\delta < \varepsilon \in (0,1)$ and any $0 \leq s < t \leq T$,
\[\E \Big[ \Big( \mathcal{A}_{s,t}^\varepsilon (H) - \mathcal{A}_{s,t}^\delta (H) \Big)^2\Big] \leq C (t-s) \varepsilon \|\nabla H\|_{L^2 ([0,1])}^2,\]
where
\begin{equation}\label{def-A-delta}
\mathcal{A}_{s,t}^\varepsilon (H) := - \int_s^t \int_0^1 \Big( \mathcal{Y}_r \big( \iota_{\varepsilon,u}\big)\Big)^2 \nabla H (u) du \,dr. 
\end{equation}
\item for any $H \in \mathcal{S}$,
    \[\mathcal{M}_t (H) := \mathcal{Y}_t (H) - \mathcal{Y}_0 (H) - A \int_0^t \mathcal{Y}_s (\Delta H) ds - B \mathcal{A}_t (H)\]
    is a continuous martingale with quadratic variation 
    \[\<\mathcal{M} (H)\>_t = t D \|\nabla H\|_{L^2 ([0,1])}^2,\]
    where $\mathcal{A}_t (H)$ is the $L^2 (\P)$-limit of $\mathcal{A}_{0,t}^\varepsilon (H)$ as $\varepsilon \rightarrow 0$.
\item for any $u \in \{0,1\}$,
    \begin{equation}
    \lim_{\varepsilon \rightarrow 0} \E \Big[ \sup_{0 \leq t \leq T} \Big(  \int_0^t \mathcal{Y}_s (\iota_{\varepsilon,u}) ds\Big)^2\Big] = 0.
    \end{equation}
\item the reversed process $\{\mathcal{Y}_{T-t}, 0 \leq t \leq T\}$ satisfies condition (3) with $B$ replaced by $-B$.
\end{enumerate}
\end{definition}

The following result shows that Definitions \ref{def burgers} and \ref{def burgers 2} coincide, whose proof is presented in Section \ref{sec: pf definitions equivalent}.

\begin{proposition}\label{pro: definitions equivalent}
	Let $\big\{\mathcal{Y}_t, t \in [0,T]\big\}$ be given as in Definition \ref{def burgers 2}.  Then, it can be extended to $\mathcal{C} \big([0,T]; \mathcal{S}_{\rm Dir}^\prime\big)$  satisfying the conditions in Definition \ref{def burgers}. In particular, the solution defined in Definition \ref{def burgers 2} is unique.
\end{proposition}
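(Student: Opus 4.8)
The plan is to construct the extension $\widetilde{\mathcal{Y}}$ from the $L^2$-isometry provided by stationarity, and then to verify the four conditions of Definition~\ref{def burgers} by approximating a general $H\in\mathcal{S}_{\rm Dir}$ by the cut-offs $H_k:=\psi_k H\in\mathcal{S}$, where $\psi_k\in\mathcal{C}^\infty([0,1])$ vanishes on $[0,1/k]\cup[1-1/k,1]$, equals $1$ on $[2/k,1-2/k]$, and satisfies $\|\psi_k^{(j)}\|_{L^\infty([0,1])}\le C_j k^j$. By~\eqref{stationary def}, $H\mapsto\mathcal{Y}_t(H)$ is a linear isometry (up to the factor $\sqrt{D/(2A)}$) from $(\mathcal{S},\|\cdot\|_{L^2([0,1])})$ into $L^2(\P)$; since $\mathcal{S}$ is dense in $L^2([0,1])$ this extends to a bounded linear map $\overline{\mathcal{Y}}_t\colon L^2([0,1])\to L^2(\P)$ — which on the $\iota_{\varepsilon,u}$ agrees with the extension already implicit in Definition~\ref{def burgers 2} — and, because $\|\cdot\|_{L^2([0,1])}\le\|\cdot\|_{L^\infty([0,1])}$, its restriction $\widetilde{\mathcal{Y}}$ to $\mathcal{S}_{\rm Dir}\subset L^2([0,1])$ is continuous for the Fréchet topology of $\mathcal{S}_{\rm Dir}$. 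This $\widetilde{\mathcal{Y}}$ extends $\mathcal{Y}$ and satisfies condition~(1) of Definition~\ref{def burgers} on $\mathcal{S}_{\rm Dir}$ by bilinearity and $L^2$-density. Since $\Delta$ maps $\mathcal{S}_{\rm Dir}$ into itself and $\nabla$ maps $\mathcal{S}_{\rm Dir}$ into $\mathcal{S}_{\rm Neu}$, every term in Definition~\ref{def burgers} is meaningful for $\widetilde{\mathcal{Y}}$; the path regularity $\widetilde{\mathcal{Y}}\in\mathcal{C}([0,T];\mathcal{S}_{\rm Dir}')$ will be obtained at the end.

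A direct computation gives $H_k\to H$ and $\nabla H_k\to\nabla H$ in $L^2([0,1])$ — for the gradient, $\psi_k' H$ is supported in bands of width $1/k$ on which $|H|\lesssim 1/k$, so $\|\psi_k' H\|_{L^2}=O(k^{-1/2})$ — whereas $\Delta H_k-\Delta H=(\psi_k-1)\Delta H+2\psi_k'\nabla H+\psi_k'' H$ is \emph{not} small in $L^2$, since $\|\psi_k'' H\|_{L^2}=O(k^{1/2})$. The last two terms I would handle with the following lemma: if $\phi_k$ is smooth, supported in $[0,2/k]\cup[1-2/k,1]$, vanishes at $2/k$ and at $1-2/k$, and $\int_0^{2/k}\varepsilon|\phi_k'(\varepsilon)|\,d\varepsilon+\int_{1-2/k}^{1}(1-\varepsilon)|\phi_k'(\varepsilon)|\,d\varepsilon$ stays bounded as $k\to\infty$, then $\lim_{k\to\infty}\E\big[\sup_{0\le t\le T}\big(\int_0^t\mathcal{Y}_s(\phi_k)\,ds\big)^2\big]=0$. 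Indeed, on $[0,2/k]$ one has $\phi_k(v)=\int_0^{2/k}\big(-\phi_k'(\varepsilon)\big)\,\varepsilon\,\iota_{\varepsilon,0}(v)\,d\varepsilon$ (and symmetrically near $1$ with $\iota_{\varepsilon,1}$), so that $\int_0^t\mathcal{Y}_s(\phi_k)\,ds$ is a superposition, of total weight equal to the quantity above, of the integrals $\int_0^t\mathcal{Y}_s(\iota_{\varepsilon,u})\,ds$ with $\varepsilon\le 2/k$ and $u\in\{0,1\}$; taking $\sup_t$, then $\|\cdot\|_{L^2(\P)}$, and applying Minkowski's integral inequality bounds the square root of the left-hand side by a constant times $\sup_{0<\varepsilon\le 2/k}\big(\E[\sup_t(\int_0^t\mathcal{Y}_s(\iota_{\varepsilon,u})\,ds)^2]\big)^{1/2}$, which tends to $0$ by condition~(4) of Definition~\ref{def burgers 2} (this quantity is nonnegative and has limit $0$ as $\varepsilon\to0$, so its supremum over $\varepsilon\le 2/k$ also tends to $0$). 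Using $|H(v)|\lesssim\mathrm{dist}(v,\{0,1\})$ one checks that $2\psi_k'\nabla H$ and $\psi_k'' H$ both satisfy the hypotheses of the lemma, while $(\psi_k-1)\Delta H\to0$ in $L^2$ directly; hence $\int_0^t\mathcal{Y}_s(\Delta H_k)\,ds\to\int_0^t\widetilde{\mathcal{Y}}_s(\Delta H)\,ds$ in $L^2(\P)$, uniformly in $t\in[0,T]$.

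Next I would pass to the limit in the martingale decomposition. For $H\in\mathcal{S}$, $\mathcal{M}(H)$ is a continuous martingale with $\mathcal{M}_0(H)=0$ and $\langle\mathcal{M}(H)\rangle_t=tD\|\nabla H\|_{L^2}^2$; since $H_k-H_j\in\mathcal{S}$ and $\|\nabla(H_k-H_j)\|_{L^2}\to0$, Doob's inequality gives $\E[\sup_t|\mathcal{M}_t(H_k)-\mathcal{M}_t(H_j)|^2]\le 4TD\|\nabla(H_k-H_j)\|_{L^2}^2\to0$, so $\mathcal{M}(H_k)$ converges in $L^2(\P;\mathcal{C}([0,T]))$ to a continuous martingale $M$ with $\langle M\rangle_t=tD\|\nabla H\|_{L^2}^2$. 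As $\mathcal{Y}_t(H_k)\to\widetilde{\mathcal{Y}}_t(H)$, $\mathcal{Y}_0(H_k)\to\widetilde{\mathcal{Y}}_0(H)$ in $L^2(\P)$ and, by the previous paragraph, $\int_0^t\mathcal{Y}_s(\Delta H_k)\,ds\to\int_0^t\widetilde{\mathcal{Y}}_s(\Delta H)\,ds$ in $L^2(\P)$, the quantity $B\mathcal{A}_t(H_k)$ also converges in $L^2(\P)$, and we put $B\mathcal{A}_t(H):=\widetilde{\mathcal{Y}}_t(H)-\widetilde{\mathcal{Y}}_0(H)-A\int_0^t\widetilde{\mathcal{Y}}_s(\Delta H)\,ds-M_t$. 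To match Definition~\ref{def burgers}(3), one verifies that $\mathcal{A}_t(H)$ is the $L^2(\P)$-limit of $\mathcal{A}_{0,t}^\varepsilon(H)$ as $\varepsilon\to0$: for fixed $\varepsilon$ one has $\|\mathcal{A}_{0,t}^\varepsilon(H_k)-\mathcal{A}_{0,t}^\varepsilon(H)\|_{L^1(\P)}\le\tfrac{D}{2A}\varepsilon^{-1}t\,\|\nabla H_k-\nabla H\|_{L^1([0,1])}\to0$, while $\mathcal{A}_{0,t}^\varepsilon(H_k)\to\mathcal{A}_t(H_k)$ as $\varepsilon\to0$ uniformly in $k$ by the energy estimate of Definition~\ref{def burgers 2}(2) and $\sup_k\|\nabla H_k\|_{L^2}<\infty$; exchanging the limits gives $\mathcal{A}_{0,t}^\varepsilon(H)\to\mathcal{A}_t(H)$ in probability, and Fatou's lemma along the $H_k$ applied to the energy estimate both yields Definition~\ref{def burgers}(2) for $H\in\mathcal{S}_{\rm Dir}$ and makes $(\mathcal{A}_{0,t}^\varepsilon(H))_{\varepsilon}$ Cauchy in $L^2(\P)$, so its limit lies in $L^2(\P)$ and equals $\mathcal{A}_t(H)$. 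This establishes conditions~(1)--(3) of Definition~\ref{def burgers}; condition~(4) follows by running the same argument for $\{\mathcal{Y}_{T-t}\}$, which satisfies the hypotheses of Definition~\ref{def burgers 2} with $B$ replaced by $-B$ — stationarity and the boundary condition~(4) are invariant under time reversal (via $\sup_t|\int_{T-t}^{T}\mathcal{Y}_r(\iota_{\varepsilon,u})\,dr|\le2\sup_t|\int_0^t\mathcal{Y}_r(\iota_{\varepsilon,u})\,dr|$), the energy estimate is symmetric under $\mathcal{A}_{s,t}^\varepsilon\leftrightarrow\mathcal{A}_{T-t,T-s}^\varepsilon$, and condition~(5) of Definition~\ref{def burgers 2} gives its version of~(3) with $-B$. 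Finally, the decomposition $\widetilde{\mathcal{Y}}_t(H)=\widetilde{\mathcal{Y}}_0(H)+A\int_0^t\widetilde{\mathcal{Y}}_s(\Delta H)\,ds+B\mathcal{A}_t(H)+M_t$ (holding for each $t\in[0,T]$), in which every summand is a.s.\ continuous in $t$ (the term $\mathcal{A}_t(H)$ because $\mathcal{A}_{0,t}^\varepsilon(H)\to\mathcal{A}_t(H)$ in $L^2(\P)$ uniformly in $t$), together with an equicontinuity argument over a countable Fréchet-dense subset of $\mathcal{S}_{\rm Dir}$ (as for the linear part in \cite[Theorem~2.13]{bernardin2022equilibrium}), yields $\widetilde{\mathcal{Y}}\in\mathcal{C}([0,T];\mathcal{S}_{\rm Dir}')$.

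For the uniqueness assertion, the extension $\widetilde{\mathcal{Y}}$ is canonically determined by $\mathcal{Y}$ (it is fixed by the $L^2$-isometry), so two solutions in the sense of Definition~\ref{def burgers 2} give rise to two stationary energy solutions of~\eqref{burgers} in the sense of Definition~\ref{def burgers}, which coincide in law by \cite[Theorem~3.3]{gonccalves2020derivation}; restricting the test functions back to $\mathcal{S}$ gives equality in law of the originals. I expect the main obstacle to be the lemma of the second paragraph: because $\psi_k'$ and $\psi_k''$ are of size $k$ and $k^2$ on bands of width $1/k$, the test functions $2\psi_k'\nabla H+\psi_k'' H$ are \emph{not} small in $L^2([0,1])$, so the isometry $\E[\widetilde{\mathcal{Y}}_s(\phi)^2]=\tfrac{D}{2A}\|\phi\|_{L^2}^2$ is useless; these terms vanish only because they are concentrated at the boundary and — thanks precisely to $H$ vanishing there, i.e.\ $H\in\mathcal{S}_{\rm Dir}$ — can be written as superpositions of the $\iota_{\varepsilon,u}$ at scales $\varepsilon=O(1/k)$ with \emph{bounded} total mass, which is exactly the regime in which condition~(4) of Definition~\ref{def burgers 2} supplies decay. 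The remaining ingredients — the $L^2$-extension, the Doob estimate for the martingale part, and the Fatou/energy-estimate bookkeeping for $\mathcal{A}_t$ — are routine once this lemma is available.
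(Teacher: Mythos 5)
Your argument is correct, and it reaches the conclusion by a genuinely different route than the paper. The paper proceeds in two stages: it first extends the martingale problem from $\mathcal{S}$ to the Sobolev class $\tilde{\mathcal{S}}=\{H\in\mathcal{H}^2([0,1]):H(0)=H(1)=H'(0)=H'(1)=0\}$, where the approximants converge together with \emph{two} derivatives in $L^2$ so that no boundary input is needed; it then treats the two special functions $\psi_{\alpha,\beta}(u)=u\,\phi(\alpha(u-\beta))$ and $\tilde\psi_{\alpha,\beta}(u)=\psi_{\alpha,\beta}(1-u)$, whose regularized approximants $h_\varepsilon\phi_{\alpha,\beta}$ produce a second-derivative term that is essentially $\iota_{\varepsilon,0}$ and is killed by condition (4) of Definition~\ref{def burgers 2}; finally it writes a general $H\in\mathcal{S}_{\rm Dir}$ as $H'(0)\psi_{\alpha,\beta}-H'(1)\tilde\psi_{\alpha,\beta}+G_{\alpha,\beta}$ with $G_{\alpha,\beta}\in\tilde{\mathcal{S}}$ and lets $\alpha\to\infty$, using that $\|\nabla\psi_{\alpha,\alpha^{-1}}\|_{L^2([0,1])}\to0$. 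You instead approximate $H$ directly by the cut-offs $H_k=\psi_k H\in\mathcal{S}$ and absorb the non-$L^2$-convergent, boundary-concentrated part of $\Delta H_k$ (namely $2\psi_k'\nabla H+\psi_k''H$) through your layer-cake lemma, writing it as a superposition of the $\iota_{\varepsilon,u}$ at scales $\varepsilon\le 2/k$ with bounded total mass (this is exactly where $H(0)=H(1)=0$ enters) and invoking condition (4) of Definition~\ref{def burgers 2} via Minkowski's integral inequality; the representation $\phi_k(v)=\int_v^{2/k}(-\phi_k'(\varepsilon))\,d\varepsilon$ and the weight computations check out. Both proofs rest on the same two pillars — the $L^2(\P)$-isometry coming from \eqref{stationary def} and the boundary condition to eliminate test-function mass concentrated at $\{0,1\}$ — and both handle the nonlinear term by combining the energy estimate with approximation (your Fatou/uniform-in-$k$ exchange plays the role of the paper's triangle-inequality argument with $C(\delta)\|\nabla(H-H_\varepsilon)\|_{L^2([0,1])}^2$). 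Your route is more unified, avoiding the intermediate class $\tilde{\mathcal{S}}$ and the explicit boundary profiles altogether; the paper's modular construction makes more visible that only $H'(0)$ and $H'(1)$ obstruct a purely $L^2$-based extension, and lets it reuse the scheme of \cite[Theorem 2.13]{bernardin2022equilibrium} almost verbatim. Two points in your write-up are compressed but harmless at the paper's own level of detail: the superposition identity $\mathcal{Y}_s(\phi_k)=\int_0^{2/k}(-\phi_k'(\varepsilon))\,\varepsilon\,\mathcal{Y}_s(\iota_{\varepsilon,0})\,d\varepsilon$ uses that the $L^2$-extension commutes with Bochner integrals (a stochastic-Fubini step worth one sentence), and the identification of the quadratic variation of the limit martingale and the a.s.\ continuity of $\tilde{\mathcal{Y}}$ as an $\mathcal{S}_{\rm Dir}'$-valued path deserve the same brief regularization argument the paper also only sketches.
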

 
\subsection{Density fluctuation fields}  We are interested in the limit of the density fluctuation field of the process, which acts on functions $H\in\mathcal{S}$  as
\[\mc{Y}^n_t (H) = \frac{1}{\sqrt{n}} \sum_{x \in \Lambda_n} \bar{\eta}_x (t) H(x/n),\]
where $\bar{\eta}_x = \eta_x - 1/2$. The following is the main result of this article.

\begin{theorem}\label{thm: fluctuation}
Fix $T > 0$. Under Assumption \ref{assump: parameters}, the sequence of processes $\{\mc{Y}^n_t, t \in [0,T]\}_{n \geq 1}$ converges in distribution in $\mathcal{C} ([0,T]; \mathcal{S}^\prime)$ as $n \rightarrow \infty$ towards
\begin{enumerate}
\item if $\theta < 3/2$, the solution to the Ornstein-Uhlenbeck process with Dirichlet boundary conditions \eqref{ou} with $A=2\sum_{y=1}^\infty y^{1-\alpha}, D=\sum_{y=1}^\infty y^{1-\alpha} $;
\item if $\theta = 3/2$, the solution to the stochastic Burgers equation with Dirichlet boundary conditions \eqref{burgers} with $A=2\sum_{y=1}^\infty y^{1-\alpha}, B=\sum_{\gamma=1}^\infty y^{-\gamma}, D=\sum_{y=1}^\infty y^{1-\alpha} $.
\end{enumerate}
\end{theorem}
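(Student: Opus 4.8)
The plan is to follow the standard martingale/Dynkin strategy for deriving SPDE limits of interacting particle systems, and to feed the output into the \emph{second} definitions of the limiting objects (Definition \ref{def ou 2} and Definition \ref{def burgers 2}), which are tailored to the model. First I would write down Dynkin's formula for the fluctuation field: for $H \in \mathcal{S}$,
\[
\mathcal{M}^n_t(H) = \mathcal{Y}^n_t(H) - \mathcal{Y}^n_0(H) - \int_0^t L_n \mathcal{Y}^n_s(H)\, ds
\]
is a martingale, and one computes $L_n \mathcal{Y}^n_s(H)$ explicitly. The symmetric part $n^2 L_s$ acting on $\mathcal{Y}^n$ produces a discrete approximation of $A\Delta H$ applied to the field (the constant $A = 2\sum_{y\geq 1} y^{1-\alpha}$ arising from the second moment $\sum_y y^2 s_\alpha(y)$, which is finite precisely because $\alpha > 2$), plus boundary terms coming from sites near $0$ and $n$; because $H \in \mathcal{S}$ vanishes to all orders at the endpoints and the jump rates are summable, these boundary contributions are negligible after a Taylor expansion argument that exploits the fast decay of $H$ against the polynomial tail $|x-y|^{-1-\alpha}$. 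The asymmetric part $n^\theta L_a$ produces, to leading order, a term of the form $B n^{\theta - 3/2}$ times a quadratic functional of the occupation variables (times $\nabla H$), plus lower-order and boundary pieces. When $\theta < 3/2$ this whole term vanishes in the limit; when $\theta = 3/2$ it survives with coefficient $B = \sum_{y\geq 1} y^{-\gamma}$ (finite since the relevant contribution is controlled by $\theta < \gamma$, per Assumption \ref{assump: parameters}), and must be rewritten as $B\mathcal{A}_t(H)$.

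The key step, and the one I expect to be the main obstacle, is the \emph{second-order Boltzmann--Gibbs principle}, Proposition \ref{pro: A4}. This is what allows the microscopic quadratic term $\frac{1}{n}\sum_x \bar\eta_x \bar\eta_{x+y}$-type objects appearing in $n^{\theta} L_a \mathcal{Y}^n$ to be replaced by the squared averaged field $\big(\mathcal{Y}^n_s(\iota_{\varepsilon, x/n})\big)^2$, and simultaneously produces the energy estimate in condition (2) of Definition \ref{def burgers 2}. Here the long jumps make the boundary terms substantially more delicate than in the nearest-neighbor case of \cite{gonccalves2020derivation}: the asymmetric reservoir terms in $L_a$ involve sums over all $y \leq 0$ and $y \geq n$ with only polynomial decay, so one cannot simply discard them, and one must use the smoothness and boundary vanishing of $H$ (integration by parts / Taylor expansion against the kernel) together with the reversibility of $\nu_{1/2}$ under $L_s$ to close the estimates. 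I would invoke Proposition \ref{pro: A4} as given.

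With the Dynkin decomposition and the Boltzmann--Gibbs replacement in hand, the remaining steps are: (i) compute the quadratic variation $\langle \mathcal{M}^n(H)\rangle_t$ and show it converges to $t D \|\nabla H\|_{L^2}^2$ with $D = \sum_{y\geq 1} y^{1-\alpha}$, using the explicit form of the carré-du-champ for $L_n$ and stationarity under $\nu_{1/2}$; (ii) prove tightness of $\{\mathcal{Y}^n\}$ in $\mathcal{C}([0,T];\mathcal{S}')$ and of the associated martingales, which is the content of Section \ref{sec: tightness} (Aldous/Mitoma criteria, plus tightness of the quadratic term via the energy estimate); (iii) verify the boundary condition, namely $\lim_{\varepsilon\to 0}\E[\sup_t (\int_0^t \mathcal{Y}^n_s(\iota_{\varepsilon,u})\,ds)^2] \to 0$ at $u \in \{0,1\}$ — at the discrete level this follows because the symmetric generator, accelerated by $n^2$, kills particles near the boundary fast enough, and is again a consequence of the type of estimates behind Proposition \ref{pro: A4}; (iv) identify any limit point: stationarity (condition (1)) is immediate from $\nu_{1/2}$ being invariant and a central limit theorem for the initial field, and the martingale characterization follows by passing to the limit in the Dynkin decomposition. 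By the uniqueness statements (Remark \ref{ou unique} for the OU case and Proposition \ref{pro: definitions equivalent} for the Burgers case), the limit point is unique, which upgrades convergence along subsequences to convergence in distribution. The case split $\theta < 3/2$ versus $\theta = 3/2$ enters only through whether the factor $n^{\theta - 3/2}$ multiplying $\mathcal{A}^n_t(H)$ vanishes or equals one in the limit.
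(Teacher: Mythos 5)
Your proposal follows essentially the same route as the paper: Dynkin's decomposition of $L_n\mathcal{Y}^n_s(H)$ into symmetric, boundary and quadratic pieces, the replacement/vanishing lemmas for the former, the second-order Boltzmann--Gibbs principle (Proposition \ref{pro: A4}) for the quadratic term together with the energy estimate, convergence of the martingale and Mitoma tightness, a Kipnis--Varadhan-type verification of the boundary condition \eqref{ou bc}, and identification of limit points through Definitions \ref{def ou 2} and \ref{def burgers 2} combined with the uniqueness statements (Remark \ref{ou unique} and Proposition \ref{pro: definitions equivalent}). The only ingredient you leave implicit is condition (5) of Definition \ref{def burgers 2} on the time-reversed process, which the paper settles in one line by observing that $\{\eta_{T-t}\}$ is Markov with generator $n^2L_s+n^\theta L_a^*$, where $L_a^*$ has $p_\gamma(\cdot)$ replaced by $p_\gamma(-\cdot)$.
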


\section{Proof of Theorem \ref{thm: fluctuation}}\label{sec proof}

\subsection{Associated martingales}\label{subsec: calculation} For any $H \in \mathcal{S}$, by Dynkin's martingale,
\begin{align}
\mc{M}^n_t (H) &= \mc{Y}^n_t (H) - \mc{Y}^n_0 (H) - \int_0^t L_n \mc{Y}^n_s (H) ds\label{martingale}\\
\mc{M}^n_t (H)^2 &- \int_0^t  \big\{L_n \mc{Y}^n_s (H)^2 - 2 \mc{Y}^n_s (H) L_n \mc{Y}^n_s (H)\big\} ds\label{quadratic variation}
\end{align}
are both martingales. 

We first calculate the integral term in \eqref{martingale}. Below, we write $\eta_x (s)$ simply as $\eta_x$ when there are no confusions. For the symmetric part,
\begin{align*}
n^2 L_{s} \mc{Y}^n_s (H) 
&= \frac{n^2}{\sqrt{n}} \sum_{x,y \in \Lambda_n} s_{\alpha} (x-y) \big[ H(\tfrac{x}{n}) - H(\tfrac{y}{n}) \big] [\eta_y - \eta_x]  \\
&+ \frac{n^2}{2\sqrt{n}} \sum_{x \in \Lambda_n, y \notin \Lambda_n} s_{\alpha} (x-y) H(\tfrac{x}{n}) (1-2\eta_x).
\end{align*}
Denote
\begin{equation}\label{def-r-alpha}
r_\alpha (x) := \sum_{y \notin \Lambda_n} s_\alpha (x-y), \quad x \in \Lambda_n.
\end{equation}
Note that $1-2\eta_x = - 2 \bar{\eta}_x$. Then, $n^2 L_{s} \mc{Y}^n_s (H) = A^{n,1}_s (H)+ A^{n,2}_s (H)$, where
\begin{align}
A^{n,1}_s(H) &= \frac{n^2}{\sqrt{n}} \sum_{x,y \in \Lambda_n} s_{\alpha} (x-y) \big[ H(\tfrac{x}{n}) - H(\tfrac{y}{n}) \big] [\Bar{\eta}_y - \Bar{\eta}_x],\label{Def-A-1}\\
A^{n,2}_s(H) &= - \frac{n^2}{\sqrt{n}} \sum_{x \in \Lambda_n} r_\alpha (x) H(\tfrac{x}{n}) \Bar{\eta}_x. \label{Def-A-2}
\end{align}
For the asymmetric part,
\begin{align}
n^\theta L_{a} \mc{Y}^n_s (H) 
&= \frac{n^\theta}{\sqrt{n}} \sum_{x,y \in \Lambda_n} p_{\gamma} (y-x) \eta_x (1-\eta_y)  \big[ H(\tfrac{y}{n}) - H(\tfrac{x}{n}) \big]   \notag \\
&+ \frac{n^\theta}{2\sqrt{n}} \sum_{x \in \Lambda_n, y \leq 0} p_{\gamma} (x-y) (1-\eta_x) H(\tfrac{x}{n}) \notag\\
&- \frac{n^\theta}{2\sqrt{n}} \sum_{x \in \Lambda_n, y \geq n} p_{\gamma} (y-x) \eta_x H(\tfrac{x}{n}).\label{asym part}
\end{align}
Define $s_\gamma$ and $a_\gamma$ as the symmetric and anti-symmetric parts of $p_\gamma$ respectively,
\[s_\gamma (x) = \frac{p_\gamma (x) + p_\gamma (-x)}{2}, \quad a_\gamma (x) = \frac{p_\gamma (x) - p_\gamma (-x)}{2}.\]
Then, the first term on the righthand side of \eqref{asym part} equals
\begin{align*}
A^{n,3}_s(H) + \frac{n^\theta}{2\sqrt{n}} \sum_{x,y \in \Lambda_n} a_{\gamma} (y-x) [\eta_x - \eta_y]^2 \big[ H(\tfrac{y}{n}) - H(\tfrac{x}{n}) \big],
\end{align*}
where
\[A^{n,3}_s(H) = \frac{n^\theta}{2\sqrt{n}} \sum_{x,y \in \Lambda_n} s_{\gamma} (y-x) [\Bar{\eta}_x - \Bar{\eta}_y] \big[ H(\tfrac{y}{n}) - H(\tfrac{x}{n}) \big].\]
Since
\[[\eta_x - \eta_y]^2 = (\Bar{\eta}_x)^2 + (\Bar{\eta}_y)^2 - 2\Bar{\eta}_x\Bar{\eta}_y = \frac{1}{2} - 2\Bar{\eta}_x\Bar{\eta}_y, \]
the first term on the righthand side of \eqref{asym part} equals
\begin{align}
   &A^{n,3}_s(H) + A^{n,4}_s(H) + \frac{n^\theta}{4\sqrt{n}} \sum_{x,y \in \Lambda_n} a_{\gamma} (y-x) \big[ H(\tfrac{y}{n}) - H(\tfrac{x}{n}) \big]\notag\\
   &=  A^{n,3}_s(H) + A^{n,4}_s(H) + \frac{n^\theta}{4\sqrt{n}} \sum_{x,y \in \Lambda_n} [a_{\gamma} (x-y) - a_{\gamma} (y-x)]  H(\tfrac{x}{n}),\label{asym part 1}
\end{align}
where
\[A^{n,4}_s(H) = -\frac{n^\theta}{\sqrt{n}} \sum_{x,y \in \Lambda_n} a_{\gamma} ({y-x}) \big[ H(\tfrac{y}{n}) - H(\tfrac{x}{n}) \big] \Bar{\eta}_x\Bar{\eta}_y. \]
The sum of the second and the third terms on the righthand side of \eqref{asym part} equals
\begin{equation}\label{asym part 2}
    A^{n,5}_s(H) + \frac{n^\theta}{4 \sqrt{n}} \sum_{x \in \Lambda_n, y \leq 0} p_{\gamma} (x-y) H(\tfrac{x}{n}) - \frac{n^\theta}{4 \sqrt{n}} \sum_{x \in \Lambda_n, y \geq n} p_{\gamma} (y-x) H(\tfrac{x}{n}),
\end{equation}
where
\[A^{n,5}_s(H) = - \frac{n^\theta}{2 \sqrt{n}} \sum_{x \in \Lambda_n} r_\gamma (x) H(\tfrac{x}{n}) \bar{\eta}_x, \quad r_\gamma (x) = \sum_{y \leq 0} p_\gamma (x-y) + \sum_{y \geq n} p_\gamma (y-x).\]
Since
\begin{align*}
    \sum_{y \in \Lambda_n} [a_\gamma (x-y) - a_{\gamma} (y-x)] &= - \sum_{y \notin \Lambda_n} [a_\gamma (x-y) - a_{\gamma} (y-x)] \\
    &= - \sum_{y \notin \Lambda_n} [p_\gamma (x-y) - p_{\gamma} (y-x)]\\
    &= - \sum_{y \leq 0} p_\gamma (x-y) + \sum_{y \geq n} p_\gamma (y-x),
\end{align*}
 the constant terms in \eqref{asym part 1}\ and \eqref{asym part 2} cancel out each other. Thus,
\begin{equation}\label{sep-Ln}
L_n \mc{Y}^n_s (H) = \sum_{j=1}^5 A^{n,j}_s(H).
\end{equation}

For the integral term in \eqref{quadratic variation}, one can easily show that
\begin{align*}
    &L_n \mc{Y}^n_s (H)^2 - 2 \mc{Y}^n_s (H) L_n \mc{Y}^n_s (H) \\
    &=  \sum_{x,y \in \Lambda_n} [n s_\alpha (x-y) + n^{\theta-1} p_\gamma (y-x) \eta_x (1-\eta_y)] (\eta_x - \eta_y)^2 \big[ H(\tfrac{y}{n}) - H(\tfrac{x}{n}) \big]^2\\
    &+  \sum_{x \in \Lambda_n} \Big[  \frac{n}{2} r_\alpha (x) + \frac{n^{\theta-1}}{2} \sum_{y \leq 0} p_\gamma (x-y)(1-\eta_x) + \frac{n^{\theta-1}}{2} \sum_{y \geq n} p_\gamma (y-x)\eta_x \Big]  H(\tfrac{x}{n})^2.    
\end{align*}

Next, we deal with the above terms respectively.

The following result shows that we can replace the term $A^{n,1}_s$ by a constant multiple of  $\mathcal{Y}^n_s (\Delta H)$ as $n \rightarrow \infty$.

\begin{lemma}\label{lem: A^n_1} For any $H \in \mathcal{S}$,
	\[\lim_{ n \rightarrow \infty} \E \Big[  \sup_{0 \leq t \leq T} \Big(  \int_0^t \big\{ A^{n,1}_s (H) - C_\alpha \mathcal{Y}^n_s (\Delta H) \big\} ds \Big)^2\Big] = 0,\]
	where $C_\alpha :=2 \sum_{y =1}^\infty y^{1-\alpha}$.
\end{lemma}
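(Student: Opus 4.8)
The plan is to rewrite $A^{n,1}_s(H)$ as a linear functional of the fluctuation field, subtract off the target, and reduce the claim to a deterministic estimate on the resulting remainder, exploiting that the process starts from (and remains in) the invariant measure $\nu_{1/2}$.

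Using the symmetry of $s_\alpha$, a symmetrization of \eqref{Def-A-1} (relabelling $x\leftrightarrow y$ and collecting the coefficient of each $\bar\eta_x$) gives
\[
A^{n,1}_s(H) = 2n^{3/2}\sum_{x\in\Lambda_n}\bar\eta_x(s)\,\Theta_n H\big(\tfrac{x}{n}\big),\qquad
\Theta_n H\big(\tfrac{x}{n}\big):=\sum_{y\in\Lambda_n}s_\alpha(x-y)\Big[H\big(\tfrac{y}{n}\big)-H\big(\tfrac{x}{n}\big)\Big].
\]
Since $C_\alpha\mathcal{Y}^n_s(\Delta H)=\tfrac{C_\alpha}{\sqrt n}\sum_{x\in\Lambda_n}\bar\eta_x(s)H''(x/n)$ and $2n^{3/2}\cdot\tfrac{C_\alpha}{2n^2}=\tfrac{C_\alpha}{\sqrt n}$, setting
\[
R_n(x):=\Theta_n H\big(\tfrac{x}{n}\big)-\frac{C_\alpha}{2n^2}H''\big(\tfrac{x}{n}\big),
\]
we get $A^{n,1}_s(H)-C_\alpha\mathcal{Y}^n_s(\Delta H)=2n^{3/2}\sum_{x\in\Lambda_n}\bar\eta_x(s)R_n(x)$. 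For any process $(\Phi_s)$, Cauchy--Schwarz in time gives $\sup_{0\le t\le T}\big(\int_0^t\Phi_s\,ds\big)^2\le T\int_0^T\Phi_s^2\,ds$; taking expectations with $\Phi_s=A^{n,1}_s(H)-C_\alpha\mathcal{Y}^n_s(\Delta H)$, using stationarity and $\E_{\nu_{1/2}}[\bar\eta_x\bar\eta_y]=\tfrac14\mathbf{1}\{x=y\}$,
\[
\E\Big[\sup_{0\le t\le T}\Big(\int_0^t\big(A^{n,1}_s(H)-C_\alpha\mathcal{Y}^n_s(\Delta H)\big)\,ds\Big)^2\Big]\;\le\; T^2\, n^3\sum_{x\in\Lambda_n}R_n(x)^2,
\]
so it suffices to prove $n^3\sum_{x\in\Lambda_n}R_n(x)^2\to 0$.

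Next I would split $R_n$ into a bulk term and a boundary term. Since $H\in\mathcal{S}$, extending $H$ by $0$ outside $[0,1]$ yields a function in $\mathcal{C}^\infty(\R)$ with all derivatives bounded. Writing $\sum_{y\in\Lambda_n}=\sum_{y\in\Z}-\sum_{y\notin\Lambda_n}$, using $H(y/n)=0$ for $y\notin\Lambda_n$ and the definition \eqref{def-r-alpha} of $r_\alpha$, together with $\sum_{z}s_\alpha(z)z=0$ and $\sum_{z}s_\alpha(z)z^2=C_\alpha$ (both finite because $\alpha>2$), one obtains $R_n(x)=R_n^{(1)}(x)+R_n^{(2)}(x)$ with
\[
R_n^{(1)}(x):=\sum_{z\in\Z\setminus\{0\}}s_\alpha(z)\Big[H\big(\tfrac{x+z}{n}\big)-H\big(\tfrac{x}{n}\big)-\tfrac{z}{n}H'\big(\tfrac{x}{n}\big)-\tfrac{z^2}{2n^2}H''\big(\tfrac{x}{n}\big)\Big],\qquad R_n^{(2)}(x):=H\big(\tfrac{x}{n}\big)\,r_\alpha(x).
\]
For $R_n^{(1)}$ I bound the bracket by $\tfrac16\|H'''\|_{L^\infty}|z|^3/n^3$ when $|z|\le n$ (third-order Taylor), and by $2\|H\|_{L^\infty}+\|H'\|_{L^\infty}|z|/n+\tfrac12\|H''\|_{L^\infty}z^2/n^2$ when $|z|>n$; multiplying by $s_\alpha(z)=|z|^{-1-\alpha}$ and summing, the near part contributes $\lesssim n^{-3}\sum_{1\le|z|\le n}|z|^{2-\alpha}\lesssim n^{-\alpha}+n^{-3}\log n$ and the far part $\lesssim n^{-\alpha}$, both using $\alpha>2$. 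Hence $\sup_{x\in\Lambda_n}|R_n^{(1)}(x)|=o(n^{-2})$ and $n^3\sum_{x\in\Lambda_n}R_n^{(1)}(x)^2=n^3\cdot O(n)\cdot o(n^{-4})=o(1)$.

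The main obstacle is the boundary term $R_n^{(2)}$, where there is no uniform-in-$x$ bound on $r_\alpha$. From \eqref{def-r-alpha}, $r_\alpha(x)=\sum_{k\ge x}k^{-1-\alpha}+\sum_{k\ge n-x}k^{-1-\alpha}\le C\,(x\wedge(n-x))^{-\alpha}$. Here I use crucially that $H\in\mathcal{S}$ (not merely $H\in\mathcal{S}_{\rm Dir}$): since $H$ vanishes to all orders at both endpoints, for every $p\in\N$ there is $C_p$ with $|H(u)|\le C_p\,(u\wedge(1-u))^p$ on $[0,1]$, whence $|R_n^{(2)}(x)|\le C_p'\,(x\wedge(n-x))^{p-\alpha}/n^p$. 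Choosing an integer $p\ge\alpha$,
\[
n^3\sum_{x\in\Lambda_n}R_n^{(2)}(x)^2\;\le\; C\,n^{3-2p}\sum_{m=1}^{n}m^{2(p-\alpha)}\;\lesssim\; n^{3-2p}\cdot n^{2p-2\alpha+1}\;=\;n^{4-2\alpha}\;\longrightarrow\;0
\]
as $n\to\infty$, since $\alpha>2$. Together with the bound on $R_n^{(1)}$ this yields $n^3\sum_{x\in\Lambda_n}R_n(x)^2\to0$ and hence the lemma. Thus the delicate points are trading the polynomial decay of $r_\alpha$ against the super-polynomial vanishing of the test function near the boundary, and --- when $2<\alpha\le 3$ --- truncating the $z$-sum in $R_n^{(1)}$ at scale $n$ to compensate for the divergent third moment of $s_\alpha$.
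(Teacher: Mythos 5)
Your proof is correct and follows essentially the same route as the paper's: after symmetrizing in $x,y$ and extending $H$ by zero, you split the discrepancy into a bulk Taylor remainder (with the sum over $|z|\le n$ and $|z|>n$ treated separately, using $\alpha>2$) and the boundary term $H(\tfrac{x}{n})r_\alpha(x)$, which is killed by the super-polynomial vanishing of $H\in\mathcal{S}$ at the endpoints. The only cosmetic differences are your explicit deterministic reduction to $n^3\sum_x R_n(x)^2\to 0$ and a third-order Taylor bound where the paper uses a second-order expansion with remainder, so no further comment is needed.
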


\begin{proof}
Recall \eqref{Def-A-1}, since $s_\alpha$ is symmetric, 
\[A^{n,1}_s (H) = \frac{2n^2}{\sqrt{n}} \sum\limits_{x,y\in\Lambda_n} s_\alpha(x-y) \big[ H(\tfrac{y}{n})-H(\tfrac{x}{n})) \big]\Bar{\eta}_x. \]
For any $H \in \mathcal{S}$, let $\tilde{H} \in C_c^\infty (\R)$ be the extension of $H$ from $[0,1]$ to $\R$ such that $\tilde{H} (u) = 0$ for $u \notin [0,1]$. Then, 
\begin{equation*}
	\begin{aligned}
		A^{n,1}_s (H) & =\frac{2}{\sqrt{n}} \sum\limits_{x\in\Lambda_n} \Bar{\eta}_x \cdot n^2 \sum\limits_{ y\in\Z } |x-y|^{-1-\alpha} \big[ 
		\tilde{H}(\tfrac{y}{n}) - \tilde{H}(\tfrac{x}{n}) \big]\\
		& - \frac{2}{\sqrt{n}} \sum\limits_{x\in\Lambda_n} \Bar{\eta}_x \cdot n^2 \sum\limits_{ y \notin \Lambda_n } |x-y|^{-1-\alpha} \big[ 
		\tilde{H}(\tfrac{y}{n}) - \tilde{H}(\tfrac{x}{n}) \big].
	\end{aligned}
\end{equation*}
By Cauchy-Schwarz inequality, the lemma follows directly from the following estimates:
\begin{align}
	\lim\limits_{n\to\infty} E_{\nu_{1/2}} \Big[ \Big( \frac{2}{\sqrt{n}}\sum\limits_{x\in\Lambda_n} \bar{\eta}_x \cdot
n^2\sum\limits_{y\in\mathbb{Z}}|y-x|^{-1-\alpha}
[\tilde{H}(\tfrac{y}{n}) - \tilde{H}(\tfrac{x}{n})] 
- C_\alpha \mathcal{Y}^n_s (\Delta H) \Big)^2\Big] = 0, \label{A-1-est}\\
	\lim\limits_{n\to\infty} E_{\nu_{1/2}} \Big[ \Big( \frac{2}{\sqrt{n}} \sum\limits_{x\in\Lambda_n} \Bar{\eta}_x \cdot n^2 \sum\limits_{ y \notin \Lambda_n } |x-y|^{-1-\alpha} \big[ 
\tilde{H}(\tfrac{y}{n}) - \tilde{H}(\tfrac{x}{n}) \big] \Big)^2\Big] = 0.\label{A-1-est2}
\end{align}

We first prove \eqref{A-1-est}.  We first write
\begin{multline*}
n^2\sum\limits_{y\in\mathbb{Z}} |y-x|^{-1-\alpha}
\big[\tilde{H}(\tfrac{y}{n}) - \tilde{H}(\tfrac{x}{n})\big]\\
=  n^2\sum\limits_{|y-x| \leq n} |y-x|^{-1-\alpha}
\big[\tilde{H}(\tfrac{y}{n}) - \tilde{H}(\tfrac{x}{n})\big]
+ n^2\sum\limits_{|y-x| > n} |y-x|^{-1-\alpha}
\big[\tilde{H}(\tfrac{y}{n}) - \tilde{H}(\tfrac{x}{n})\big].
\end{multline*}
The second term on the right hand side has order $n^{2-\alpha}$, which converges to zero as $n \rightarrow \infty$ since $\alpha > 2$. For the first term on the right hand side, by Taylor's expansion, it equals
\begin{align*}
 \frac12\sum\limits_{|y| \leq n}|y|^{1-\alpha}\cdot 
\tilde{H}^{(2)}(\tfrac{x}{n}) + \mathcal{R}_n^\alpha,
\end{align*}
where 
\begin{align*}
|\mathcal{R}_n^\alpha| \leq C(H) n^{-2} \sum_{|y| \leq n} |y|^{3-\alpha}.
\end{align*}
Note that the  error term above has order $O(n^{-2})$ if $\alpha>4$, $O(n^{2-\alpha}\log n)$ if $\alpha=4$, $O(n^{2-\alpha})$ if $\alpha<4$, which converges to 0 as $n\to\infty$.  Thus,
\begin{equation}\label{eqn 7}
\lim_{ n \rightarrow \infty} \sup_{x \in \Lambda_n} \Big| n^2\sum\limits_{y\in\mathbb{Z}} |y-x|^{-1-\alpha}
\big[\tilde{H}(\tfrac{y}{n}) - \tilde{H}(\tfrac{x}{n})\big] - \frac{C_\alpha}{2} \tilde{H}^{(2)}(\tfrac{x}{n})  \Big| = 0.
\end{equation}
This proves \eqref{A-1-est} by Cauchy-Schwarz inequality.

It remains to prove \eqref{A-1-est2}.  Since $\tilde{H}(y)=0$ if $y\notin[0,1]$,  the term in the square can be rewritten as 
\begin{equation}\label{eqn 1}
	\frac{2n^2}{\sqrt{n}}\sum\limits_{x\in\Lambda_n}\bar{\eta}_x
	\tilde{H}(\tfrac{x}{n}) \sum\limits_{y\notin\Lambda_n}|x-y|^{-1-\alpha}.
\end{equation} 
Since
\[ \sum\limits_{y\notin\Lambda_n}|x-y|^{-1-\alpha} 
\leq C(\alpha)(x^{-\alpha} + (n-x)^{-\alpha}),\]
we have 
\begin{align*}
	&E_{\nu_{1/2}}\Big[\Big( \frac{2n^2}{\sqrt{n}}\sum\limits_{x\in\Lambda_n}\bar{\eta}_x
	\tilde{H}(\tfrac{x}{n}) \sum\limits_{y\notin\Lambda_n}|x-y|^{-1-\alpha} \Big)^2\Big]\\
	\leq & C(\alpha)n^3\sum\limits_{x\in\Lambda_n}\tilde{H}(\tfrac{x}{n})^2
	(x^{-2\alpha} + (n-x)^{-2\alpha})\\
	\leq & C(\alpha)n^{4-2\alpha}\int_{\frac{1}{n}}^{1-\frac{1}{n}}
	\tilde{H}(u)^2(u^{-2\alpha} + (1-u)^{-2\alpha})\,du.
\end{align*}
Since $H\in\mathcal{S}$, $\tilde{H} (u)^2 \leq C(H) [u^{2\alpha} \mathbf{1}_{\{u\in[0,\frac12]\}} + (1-u)^{2\alpha} \mathbf{1}_{\{u\in(\frac12,1]\}}]$. In particular,  the last integral is finite.  This concludes the proof since $\alpha>2$. 
\end{proof}

The next result shows that $A^{n,i}_s$ vanishes as $n \rightarrow \infty$ for $i=2,3,5$.

\begin{lemma}\label{lem: A^n_2} For $i=2,3,5$, 
\[\lim_{ n \rightarrow \infty} \E  \Big[  \sup_{0 \leq t \leq T} \Big(  \int_0^t A^{n,i}_s (H)  ds \Big)^2\Big] = 0.\]
\end{lemma}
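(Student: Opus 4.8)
The plan is to reduce the claim to a static second--moment estimate under the invariant measure. Since $\eta(\cdot)$ starts from $\nu_{1/2}$, Cauchy--Schwarz in the time variable gives, for any $V:\mc X_n\to\R$,
\[
\E\Big[\sup_{0\le t\le T}\Big(\int_0^t V(\eta(s))\,ds\Big)^2\Big]\le T\int_0^T\E\big[V(\eta(s))^2\big]\,ds=T^2\,E_{\nu_{1/2}}\!\big[V^2\big],
\]
so it suffices to show $E_{\nu_{1/2}}[(A^{n,i})^2]\to0$ for $i=2,3,5$, where $A^{n,i}$ denotes the time--frozen functional of $\eta$ appearing in $A^{n,2}_s$, $A^{n,3}_s$, $A^{n,5}_s$. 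The key point is that under $\nu_{1/2}$ the variables $\{\bar\eta_x\}_{x\in\Lambda_n}$ are i.i.d.\ centered with variance $1/4$, and $A^{n,2}$, $A^{n,5}$ are linear in $\bar\eta$; moreover, symmetrizing exactly as in the paragraph preceding the lemma, $A^{n,3}_s(H)=\frac{n^\theta}{\sqrt n}\sum_{x\in\Lambda_n}\bar\eta_x\,G_n(x)$ with $G_n(x)=\tfrac12\sum_{y\in\Lambda_n\setminus\{x\}}|y-x|^{-1-\gamma}\big(H(\tfrac yn)-H(\tfrac xn)\big)$ is linear too. Hence every second moment collapses to a single deterministic sum of squared coefficients, which is all that must be bounded.

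For $i=2$ the resulting sum is, up to a constant, the quantity $\frac{n^3}{4}\sum_{x}r_\alpha(x)^2H(x/n)^2$ already estimated in the proof of \eqref{A-1-est2}: with $r_\alpha(x)\le C(\alpha)(x^{-\alpha}+(n-x)^{-\alpha})$, comparing the sum with an integral, and using that $H\in\mathcal S$ vanishes to all orders at $\{0,1\}$, one finds $E_{\nu_{1/2}}[(A^{n,2})^2]=O(n^{4-2\alpha})\to0$ since $\alpha>2$. For $i=5$ I would first check $r_\gamma(x)=\sum_{y\le0}(x-y)^{-1-\gamma}+\sum_{y\ge n}(y-x)^{-1-\gamma}\le C(\gamma)(x^{-\gamma}+(n-x)^{-\gamma})$ (a tail of a convergent series, as $\gamma>0$), and then run the same computation, now with prefactor $n^{2\theta-1}$ in place of $n^3$, to get $E_{\nu_{1/2}}[(A^{n,5})^2]=O(n^{2\theta-2\gamma})$; this vanishes precisely because $\theta<\gamma$ by Assumption \ref{assump: parameters}.

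The case $i=3$ is the only one requiring care. We have $E_{\nu_{1/2}}[(A^{n,3})^2]=\frac{n^{2\theta-1}}{4}\sum_{x\in\Lambda_n}G_n(x)^2$, so it remains to show $\sum_xG_n(x)^2=o(n^{1-2\theta})$. Let $\tilde H\in C^\infty_c(\R)$ extend $H$ by $0$ outside $[0,1]$. Writing the $y$--sum over all of $\Z$, subtracting the terms with $y\notin\Lambda_n$ (for which $\tilde H(y/n)=0$), and noting that $\tilde H(\tfrac{x+z}{n})=0$ whenever $|z|\ge n$, I would split $G_n=G_n^{(1)}+G_n^{(2)}$, where $G_n^{(1)}(x)=\tfrac12\sum_{0<|z|<n}|z|^{-1-\gamma}\big(\tilde H(\tfrac{x+z}{n})-\tilde H(\tfrac xn)\big)$ is the bulk part and $G_n^{(2)}(x)$ collects all the boundary contributions, which satisfy $|G_n^{(2)}(x)|\le C(\gamma)\,|\tilde H(x/n)|\,(x^{-\gamma}+(n-x)^{-\gamma})$. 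For $G_n^{(2)}$ the bound $\sum_xG_n^{(2)}(x)^2=O(n^{1-2\gamma})$ follows exactly as in the case $i=5$ (using $H\in\mathcal S$), so its contribution to the second moment is $O(n^{2\theta-2\gamma})\to0$. For $G_n^{(1)}$ the point is that the index set $\{0<|z|<n\}$ is symmetric, so $\sum_{0<|z|<n}|z|^{-1-\gamma}z=0$ and the first--order term of a Taylor expansion of $\tilde H$ around $x/n$ drops out; the Lagrange form of the remainder (valid globally since $\tilde H\in C^\infty_c$) then yields $|G_n^{(1)}(x)|\le C(H)\,n^{-2}\sum_{0<|z|<n}|z|^{1-\gamma}\le C(H)\,n^{-(\gamma\wedge2)}\log n$ uniformly in $x$, the logarithm being needed only at $\gamma=2$. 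Consequently $\sum_xG_n^{(1)}(x)^2=O(n^{1-2(\gamma\wedge2)}(\log n)^2)$, and its contribution is $O(n^{2\theta-2(\gamma\wedge2)}(\log n)^2)$, which tends to $0$ by Assumption \ref{assump: parameters} (the inequality $\theta<\gamma\wedge2$ being strict absorbs the logarithm). Adding the two pieces gives the claim.

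I expect the bookkeeping for $i=3$ to be the main obstacle: one must symmetrize correctly to expose the linear structure, cleanly separate the bulk kernel sum from the boundary correction, kill the first--order Taylor term using the oddness of $z\mapsto|z|^{-1-\gamma}z$ over a symmetric truncation, and use the rapid decay of $H\in\mathcal S$ at the endpoints to make the boundary integrals converge; it is here that $\theta<\gamma\wedge2$ is used in full, whereas $\alpha>2$ and $\theta<\gamma$ alone already handle $i=2$ and $i=5$.
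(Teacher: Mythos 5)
Your proposal is correct, and its overall strategy is the one the paper uses: Cauchy--Schwarz in time plus stationarity reduces everything to second moments under $\nu_{1/2}$, linearity in $\bar\eta$ collapses these to deterministic sums of squared coefficients, $A^{n,2}$ is exactly the quantity already estimated in \eqref{A-1-est2}, $A^{n,5}$ is handled through the tail bound $r_\gamma(x)\le C(\gamma)(x^{-\gamma}+(n-x)^{-\gamma})$ giving $O(n^{2\theta-2\gamma})$, and $A^{n,3}$ is split, via the zero extension $\tilde H$, into a full-line bulk term and boundary corrections controlled by the rapid decay of $H$ at $\{0,1\}$. The one genuine (local) difference is your treatment of the bulk of $A^{n,3}$: the paper splits cases, using for $0<\gamma<2$ an $L^2$/integral bound on $\int(\int|u-v|^{-1-\gamma}[\tilde H(u)-\tilde H(v)]\,dv)^2du$ (separating $|u-v|>1$ from $|u-v|\le1$) and reserving Taylor expansion for $\gamma\ge2$, whereas you run a single Taylor argument for all $\gamma$, killing the first-order term exactly by the oddness of $z\mapsto z|z|^{-1-\gamma}$ over the symmetric truncation $\{0<|z|<n\}$ and bounding the remainder uniformly in $x$ by $C(H)n^{-(\gamma\wedge2)}\log n$. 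Both routes produce the same orders ($n^{2\theta-2\gamma}$ for $\gamma<2$, $n^{2\theta-4}(\log n)^2$ at $\gamma=2$, $n^{2\theta-4}$ for $\gamma>2$), all vanishing under $\theta<\gamma\wedge2$; your unified version is slightly cleaner, while the paper's fractional-integral estimate avoids second derivatives of $H$ in the rough-kernel regime and reuses machinery it needs elsewhere. Your bookkeeping for the boundary piece $G_n^{(2)}$ (including the $|z|\ge n$ tail, bounded by $Cn^{-\gamma}\le C(x^{-\gamma}+(n-x)^{-\gamma})$) is consistent, so I see no gap.
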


\begin{proof}
Observe that the term $A^{n,2}_s$ is the same as \eqref{eqn 1}.  The term $A^{n,3}_s$ can be dealt with similarly to the term $A^{n,1}_s$. Indeed,  we first write
\begin{align*}
	A^{n,3} (H) &= \frac{n^\theta}{\sqrt{n}} \sum_{x,y \in \Z} s_{\gamma} (y-x) \Bar{\eta}_x \big[ \tilde{H}(\tfrac{y}{n}) - \tilde{H}(\tfrac{x}{n}) \big]  \\
	&- \frac{n^\theta}{\sqrt{n}} \Big(\sum_{x \in \Lambda_n, y \notin \Lambda_n} + \sum_{y \in \Lambda_n, x \notin \Lambda_n} \Big) s_{\gamma} (y-x) \Bar{\eta}_x \big[ \tilde{H}(\tfrac{y}{n}) - \tilde{H}(\tfrac{x}{n}) \big].
\end{align*}
For $0 < \gamma < 2$, we have
\begin{align*}
&\frac{1}{n} \sum_{x \in \Z} \Big( n^\gamma \sum_{y \in \Z} s_{\gamma} (y-x)\big[ \tilde{H}(\tfrac{y}{n}) - \tilde{H}(\tfrac{x}{n}) \big] \Big)^2 \\
&\leq C \int_{\R} du \Big(\int_{\R} \frac{\tilde{H}(u) - \tilde{H}(v)}{|u-v|^{1+\gamma}} dv\Big)^2 
\leq C \int_{\R} du \int_{\R} dv \frac{[\tilde{H}(u) - \tilde{H}(v)]^2}{|u-v|^{1+\gamma}}\\
&\leq C \iint_{|u-v| > 1} \frac{2 [\tilde{H}(u)^2 + \tilde{H}(v)^2]}{|u-v|^{1+\gamma}} du dv + C \iint_{|u-v| \leq  1}  \|\tilde{H}^\prime\|_{1,\infty} (v)^2 |u-v|^{1-\gamma} du dv\\
& \leq C(H),
\end{align*}
where $\|H\|_{1,\infty} (v) = \sup_{|u-v| \leq 1} |H(u)|$. For $\gamma \geq 2$, similar to the proof of \eqref{eqn 7},  we have
\[\frac{1}{n} \sum_{x \in \Z} \Big( n^2 \sum_{y \in \Z} s_{\gamma} (y-x)\big[ \tilde{H}(\tfrac{y}{n}) - \tilde{H}(\tfrac{x}{n}) \big] \Big)^2  \leq \begin{cases}
	C(H) \quad &\text{if}\quad \gamma > 2,\\
	C(H) (\log n)^2 \quad &\text{if}\quad \gamma = 2.
\end{cases}\]
Thus, we bound
\[E_{\nu_{1/2}} \Big[  \Big( \frac{n^\theta}{\sqrt{n}} \sum_{x,y \in \Z} s_{\gamma} (y-x) \Bar{\eta}_x \big[ \tilde{H}(\tfrac{y}{n}) - \tilde{H}(\tfrac{x}{n}) \big] \Big)^2\Big] \leq C(H) \begin{cases}
n^{2\theta-2\gamma} \quad &\text{if}\quad 0 < \gamma < 2,\\
n^{2\theta- 4} (\log n)^2 \quad &\text{if}\quad  \gamma = 2,\\
n^{2\theta- 4}  \quad &\text{if}\quad  \gamma > 2,\\
\end{cases} \]
which converges to zero as $n \rightarrow \infty$ by Assumption \ref{assump: parameters}.  
For the sum $\sum_{x \in \Lambda_n, y \notin \Lambda_n}$, we first rewrite it as
\[\frac{n^\theta}{\sqrt{n}} \sum_{x \in \Lambda_n} \Bar{\eta}_x H(\tfrac{x}{n}) \sum_{y \notin \Lambda_n} s_{\gamma} (y-x),\]
and then bound
\begin{align*}
	&E_{\nu_{1/2}} \Big[ \Big( \frac{n^\theta}{\sqrt{n}} \sum_{x \in \Lambda_n} \Bar{\eta}_x H(\tfrac{x}{n}) \sum_{y \notin \Lambda_n} s_{\gamma} (y-x) \Big)^2 \Big]\\
	&\leq C(\gamma)n^{2\theta-1} \sum_{x \in \Lambda_n} H(\tfrac{x}{n})^2 \big(x^{-2\gamma} + (n-x)^{-2\gamma}\big)\\
	&\leq C(\gamma)n^{2\theta-2\gamma} \int_{\tfrac{1}{n}}^{1-\tfrac{1}{n}} H(u)^2  \big(u^{-2\gamma} + (1-u)^{-2\gamma}\big) du\\
	&\leq C(\gamma,H)n^{2\theta-2\gamma} .
\end{align*}
The sum  $\sum_{x \notin \Lambda_n, y \in \Lambda_n}$ can the treated in the same way. This concludes the estimates for the term $A_s^{n,3}$. Similarly, for $A_s^{n,5}$,
\begin{align*}
	T^2 E_{\nu_{1/2}} \Big[ \Big( A^{n,5} (H)\Big)^2\Big] &\leq C n^{2\theta-1} \sum_{x \in \Lambda_n} r_{\gamma} (x)^2 H(\tfrac x n)^2\\
	&\leq C n^{2\theta-2\gamma} \int_{\tfrac 1 n}^{1-\tfrac 1 n} H(u)^2 [u^{-2\gamma} + (1-u)^{-2\gamma}] du\\
	&\leq C(H) n^{2\theta-2\gamma},
\end{align*}
thus concluding the proof.
\end{proof}

The term $A^{n,4}_s$ is the most difficult one.  The following result is proved in Section \ref{sec: pf second order boltzmann gibbs}. Define
\[\eta_{x}^{\varepsilon n} = \begin{cases}
	\frac{1}{\varepsilon n} \sum_{y=0}^{\varepsilon n -1} \eta_{x+y}, \quad &x=1,2,\ldots,n-\varepsilon n -1.\\
	\frac{1}{\varepsilon n} \sum_{y=0}^{\varepsilon n -1} \eta_{x-y}, \quad &x=n-\varepsilon n,\ldots,n -1.
\end{cases}\]

\begin{proposition}[Second order Boltzmann-Gibbs principle]\label{pro: A4}
Under assumption \ref{assump: parameters},  we have the following results:
\begin{enumerate}
	\item if $\theta < 3/2$, then
	\[\lim_{n \rightarrow \infty} \E \Big[ \sup_{0 \leq t \leq T} \Big(  \int_0^t A^{n,4}_s (H) ds \Big)^2 \Big] = 0;\]
	\item if $\theta = 3/2$, then for any $\varepsilon > 0$,
 \begin{align*}
 \limsup\limits_{n \to \infty} \mathbb{E} \Big[ \sup_{0 \leq t \leq T} \Big(  \int_0^t \big( A_s^{n,4} - m \sum\limits_{x \in \Lambda_n}   \big\{  \big( \bar{\eta}^{\varepsilon n}_x (s) \big)^2 &- \frac{1}{4\varepsilon n}  \big\} H'(\tfrac{x}{n}) \big) ds \Big)^2\Big] \\
 &\leq C T \varepsilon \|\nabla H\|_{L^2([0,1])}^2,
 \end{align*}
	where $m = \sum yp_\gamma(y)$.
\end{enumerate} 
\end{proposition}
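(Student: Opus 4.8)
The plan is to follow the second-order Boltzmann--Gibbs scheme of \cite{gonccalves2014nonlinear}. I would prove the critical case $\theta=3/2$ in detail; part (1) then comes for free, since for general $\theta<\gamma\wedge 2$ the same argument yields, for every fixed $\varepsilon\in(0,1)$,
\[
\E\Big[\sup_{0\le t\le T}\Big(\int_0^t A^{n,4}_s(H)\,ds\Big)^2\Big]\le C\,n^{2\theta-3}\big(T^2\varepsilon^{-1}+T\varepsilon\big)\|\nabla H\|_{L^2([0,1])}^2+o_n(1),
\]
where the $T^2\varepsilon^{-1}$ term is the $L^2(\P)$-size of the replacement field (estimated directly from stationarity of $\nu_{1/2}$) and the $T\varepsilon$ term is the Boltzmann--Gibbs error; when $\theta<3/2$ the prefactor $n^{2\theta-3}$ kills everything as $n\to\infty$.

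\textbf{Step 1: reduction to a quadratic local function.} Writing $y=x+z$ in the definition of $A^{n,4}_s(H)$ and Taylor expanding $H(\tfrac{x+z}{n})-H(\tfrac xn)=\tfrac zn H'(\tfrac xn)+O(\tfrac{z^2}{n^2})$, I would show that $A^{n,4}_s(H)$ equals
\[
-\,n^{\theta-3/2}\sum_{x\in\Lambda_n}H'(\tfrac xn)\sum_{z:\,x+z\in\Lambda_n}z\,a_\gamma(z)\,\bar\eta_x(s)\,\bar\eta_{x+z}(s)
\]
plus a remainder whose time integral vanishes in $L^2(\P)$. Controlling the remainder is a direct $L^2$ estimate using stationarity, the orthogonality $E_{\nu_{1/2}}[\bar\eta_a\bar\eta_b\bar\eta_c\bar\eta_d]=0$ unless $\{a,b\}=\{c,d\}$, and the bounds $\sum_{|z|\le n}|z|^{k}|a_\gamma(z)|\lesssim n^{(k-\gamma)\vee 0}$ (up to logarithms); here the hypotheses $\theta<\gamma$ and $\theta<2$ are exactly what force every error contribution to $0$. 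I would also replace the constrained inner sum by $\sum_{z\in\Z}$, the difference being a boundary term that is negligible since $a_\gamma$ decays and $H$ vanishes to all orders at $0$ and $1$. As $\theta=3/2$ and $\sum_{z\in\Z}z\,a_\gamma(z)=\sum_{z\ge1}z^{-\gamma}=m$ (finite since $\gamma>\theta=3/2>1$), part (2) thereby reduces to the second-order Boltzmann--Gibbs replacement of the local field $\sum_z z\,a_\gamma(z)\,\bar\eta_x\bar\eta_{x+z}$ by $m\big((\bar\eta^{\varepsilon n}_x)^2-\tfrac1{4\varepsilon n}\big)$, at a cost $\le CT\varepsilon\|\nabla H\|_{L^2([0,1])}^2$ once multiplied by $H'(\tfrac xn)$, summed over $x\in\Lambda_n$, and integrated over time.

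\textbf{Step 2: Kipnis--Varadhan, truncation, multiscale.} The integrand in Step 1 is mean-zero under $\nu_{1/2}$, which is stationary for the process; by the Kipnis--Varadhan inequality the relevant expectation is $\le CT$ times the $H_{-1}$-norm taken with respect to the symmetric part of $L_n$, and this norm only decreases upon replacing that symmetric part by $n^2$ times the nearest-neighbour SSEP Dirichlet form on $\Lambda_n$ with reservoirs (since $s_\alpha(1)=1$). So it suffices to prove the corresponding \emph{static} estimate. Fix a large $K\in\N$ and split $\sum_z=\sum_{0<|z|\le K}+\sum_{|z|>K}$: the tail contributes, in $H_{-1}$-norm, at most $C\big(\sum_{|z|>K}|z||a_\gamma(z)|\big)\varepsilon\|\nabla H\|^2=CK^{-(\gamma-1)}\varepsilon\|\nabla H\|^2$ (using $\gamma>1$), which disappears on letting $K\to\infty$ at the end; on the finite-range part one runs the dyadic replacement. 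For each $z$ with $0<|z|\le K$, move $\bar\eta_x\bar\eta_{x+z}$ to $(\bar\eta^{\varepsilon n}_x)^2-\tfrac1{4\varepsilon n}$ through scales $\ell=2^{j}$, from $\ell\asymp|z|$ up to $\ell\asymp\varepsilon n$, using backward box-averages near the right endpoint of $\Lambda_n$ so that every box stays inside $\Lambda_n$ and the definition of $\bar\eta^{\varepsilon n}_x$ is respected. Each step compares $\tau_x g_\ell$ with $\tau_x g_{2\ell}$, a function supported in a box of length $O(\ell)$ and sup-norm $O(1/\ell)$, whose $H_{-1}$-norm I would bound by $\lesssim(\ell/n)\|\nabla H\|_{L^2}^2$ via the spectral gap $\asymp\ell^{-2}$ of nearest-neighbour SSEP on a segment of length $\ell$; summing the geometric series yields $C\varepsilon\|\nabla H\|^2$, the sum over $|z|\le K$ converges because $\sum_{|z|\le K}|z||a_\gamma(z)|<\infty$, and the constant $m=\sum_z z\,a_\gamma(z)$ appears precisely because all the $z$-boxes collapse onto the same $\varepsilon n$-average.

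\textbf{The main obstacle} will be carrying out the multiscale argument uniformly up to the boundary. For $x$ within $O(\varepsilon n)$ of $1$ or $n-1$ the dyadic boxes reach the boundary, and the only available Dirichlet-form input is then the reservoir part $\tfrac14\sum_{x\in\Lambda_n,\,y\notin\Lambda_n}n^2 s_\alpha(x-y)\,E_{\nu_{1/2}}\big[(f(\eta^x)-f(\eta))^2\big]$. Because $r_\alpha(x)=\sum_{y\notin\Lambda_n}s_\alpha(x-y)\asymp x^{-\alpha}+(n-x)^{-\alpha}$, the naive boundary estimates blow up; the remedy is to use the test function, namely that $H$ and all its derivatives vanish at $0$ and $1$, so $|H'(x/n)|\lesssim_{k}(x/n)^{k}\wedge(1-x/n)^{k}$ for every $k\in\N$, which more than compensates the boundary singularities coming from the long-range rate. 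This interplay between long jumps and the boundary is the only point where the argument genuinely departs from the nearest-neighbour analysis of \cite{gonccalves2020derivation}, and it is why one must ``take advantage of the test functions.''
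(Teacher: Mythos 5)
Your architecture is the same as the paper's: Taylor-expand $H$ to pull out $H'(\tfrac xn)$, truncate the jump length, use Kipnis--Varadhan together with the block estimate $\|f_1+\cdots+f_m\|_{-1,n}^2\lesssim\sum_i \tfrac{l_i^2}{n^2}\int f_i^2\,d\nu_{1/2}$ (your spectral-gap/SSEP comparison is the same tool in different clothing, and the per-step cost $\ell/n$ you quote matches the paper's), run a dyadic replacement up to boxes of size $\varepsilon n$ with backward boxes near the right endpoint, and use the vanishing of $H$ and its derivatives at $0,1$ to tame the boundary. Deducing part (1) from the critical-case machinery at fixed $\varepsilon$ via the prefactor $n^{2\theta-3}$ is a legitimate reorganization (the paper instead treats $\theta<1$ by a direct $L^2$ bound and $1\le\theta<3/2$ by optimizing a scale $K=n^{1-\delta}$), and your identification of the boundary as the main obstacle, with $|H'(u)|\lesssim u^k\wedge(1-u)^k$ as the remedy, is exactly the paper's point.

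Two concrete steps, however, do not work as you state them. First, the truncation: you fix $K$ independent of $n$ and claim the tail $|z|>K$ contributes at most $CK^{-(\gamma-1)}\varepsilon\|\nabla H\|^2$ uniformly in $n$. No justification is given, and the natural direct estimate (Cauchy--Schwarz in time plus the fourth-moment orthogonality) gives $CT^2\,n^{2\theta-2}K^{1-2\gamma}$, which at $\theta=3/2$ is $CT^2\,n\,K^{1-2\gamma}$ and is \emph{not} small for fixed $K$; a per-$z$ Kipnis--Varadhan bound also fails to be summable for $\gamma\le 5/2$. The paper resolves this by letting $K=K(n)=n^{1-\delta}$ grow (so $K^{2\gamma-1}\gg n$), and you should do the same; with a fixed $K$ your scheme does not close. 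Second, the right boundary: extending the inner sum to all $z\in\Z$ is not meaningful ($\eta_{x+z}$ is undefined outside $\Lambda_n$), and for $x$ within $O(\varepsilon n)$ of $n$ the admissible jumps are constrained to $z\le n-1-x$, so the constant $m=\sum_z z\,a_\gamma(z)$ simply does not emerge there; moreover, running the dyadic replacement in one pass up to scale $\varepsilon n$ in that strip and invoking only the decay of $H'$ produces a bound of the type $T\varepsilon^{\lambda+1}n^{2\theta-1}$, which blows up. The paper needs a genuinely two-stage argument: first replace everywhere up to scale $K=n^{1-\delta}$ (with one-sided boxes $K_x=n-1-x$ in the strip, Lemmas \ref{lem-III-1}--\ref{lem-III-2}, where the decay of $H'$ enters), and only then pass from $K$ to $\varepsilon n$ for $x\le n-\varepsilon n$, while the last $\varepsilon n$-window is handled separately by a direct $L^2$ bound of order $\varepsilon^{-2}n^{-\delta}$ and a left/right-box comparison costing $CT\varepsilon^2$ (see \eqref{eqn 5}--\eqref{eqn 6}). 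Your sketch gestures at the right remedy but does not supply this intermediate scale, and without it the boundary estimate does not close.
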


Finally, we have  the following convergence for the martingale term, whose proof is presented in Section \ref{sec: tightness}.

\begin{proposition}\label{prop: martingale convergence}
	For any $H \in \mathcal{S}$, the sequence of martingales $\{\mathcal{M}^n_t (H), t \in [0,T]\}$ converges in distribution in $\mathcal{C} ([0,T]; \R)$, as $n \rightarrow \infty$, to the Brownian motion $\{\mathcal{M}_t (H), t \in [0,T]\}$ with variance $(t C_\alpha/2) \| \nabla H \|_{L^2([0,1])}^2$.
\end{proposition}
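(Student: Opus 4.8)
The plan is to deduce the statement from the classical functional martingale central limit theorem. Since $\mathcal{M}^n_0(H)=0$, it will be enough to check two conditions: \emph{(i)} that the maximal jump $\sup_{0\le t\le T}|\mathcal{M}^n_t(H)-\mathcal{M}^n_{t^-}(H)|$ vanishes in probability, and \emph{(ii)} that the predictable quadratic variation $\langle\mathcal{M}^n(H)\rangle_t$ --- the compensator in \eqref{quadratic variation} --- converges in probability, uniformly for $t\in[0,T]$, to the deterministic function $t\,C_\alpha\|\nabla H\|_{L^2([0,1])}^2/2$. Condition \emph{(i)} is essentially free: $\mathcal{M}^n(H)$ differs from $\mathcal{Y}^n(H)$ by an absolutely continuous process, and each elementary move of the dynamics changes $\mathcal{Y}^n_t(H)=n^{-1/2}\sum_x\bar\eta_x H(\tfrac{x}{n})$ by at most $2\|H\|_{L^\infty}/\sqrt n$, so the maximal jump is deterministically $O(n^{-1/2})$; this also forces the limit to live in $\mathcal{C}([0,T];\R)$.

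For \emph{(ii)} I would start from the explicit form of the integrand in \eqref{quadratic variation} worked out in Subsection \ref{subsec: calculation}, writing $\langle\mathcal{M}^n(H)\rangle_t=\int_0^t\Phi_n(\eta_s)\,ds$ with $\Phi_n\ge 0$ the sum of the symmetric bulk term $n\sum_{x,y\in\Lambda_n}s_\alpha(x-y)(\eta_x-\eta_y)^2[H(\tfrac{y}{n})-H(\tfrac{x}{n})]^2$, an asymmetric bulk term carrying the extra factor $n^{\theta-1}p_\gamma(y-x)\eta_x(1-\eta_y)$, and a boundary term weighted by $r_\alpha(x)$ and $r_\gamma(x)$. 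Using $r_\alpha(x)\le C(x^{-\alpha}+(n-x)^{-\alpha})$, $r_\gamma(x)\le C(x^{-\gamma}+(n-x)^{-\gamma})$ and the fact that every $H\in\mathcal{S}$ vanishes to all orders at $0$ and $1$, I would bound the boundary term by $C(H)(n^{2-\alpha}+n^{\theta-\gamma})$, which vanishes by Assumption \ref{assump: parameters}. The asymmetric bulk term is nonnegative and dominated by the deterministic quantity $n^{\theta-1}\sum_{x,y}p_\gamma(y-x)[H(\tfrac{y}{n})-H(\tfrac{x}{n})]^2$; splitting jump lengths at $n$ and Taylor-expanding the short jumps gives $O(n^{\theta-2}+n^{\theta-\gamma})$, up to a $\log n$ factor when $\gamma=2$, hence negligible since $\theta<\gamma\wedge 2$. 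So only the symmetric bulk term can contribute.

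To treat that term I would use $(\eta_x-\eta_y)^2=\tfrac12-2\bar\eta_x\bar\eta_y$. The mean-zero fluctuation $f_n(\eta):=-2n\sum_{x\ne y}s_\alpha(x-y)[H(\tfrac{y}{n})-H(\tfrac{x}{n})]^2\bar\eta_x\bar\eta_y$ I would control by Cauchy--Schwarz and stationarity, $\E[(\int_0^t f_n(\eta_s)\,ds)^2]\le t^2\,E_{\nu_{1/2}}[f_n^2]$; since $\nu_{1/2}$ is a product measure this reduces to $E_{\nu_{1/2}}[f_n^2]=\tfrac18\sum_{x\ne y}\big(2n\,s_\alpha(x-y)[H(\tfrac{y}{n})-H(\tfrac{x}{n})]^2\big)^2$, which is $O(n^{-1})$ by the short-jump bound $[H(\tfrac{y}{n})-H(\tfrac{x}{n})]^4\le C(H)(y-x)^4/n^4$ (long jumps being discarded as before) together with $\sum_{z\ne 0}|z|^{2-2\alpha}<\infty$. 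For the surviving deterministic remainder $\tfrac n2\sum_{x,y\in\Lambda_n}s_\alpha(x-y)[H(\tfrac{y}{n})-H(\tfrac{x}{n})]^2$ I would repeat the argument in the proof of Lemma \ref{lem: A^n_1}: extend $H$ to $\tilde H\in C_c^\infty(\R)$, discard jumps longer than $n$ at cost $O(n^{2-\alpha})$, and Taylor-expand to obtain $\tfrac{1}{2n}\sum_{x\in\Lambda_n}\tilde H'(\tfrac{x}{n})^2\sum_{z\ne 0}|z|^{1-\alpha}+o(1)=\tfrac{C_\alpha}{2}\cdot\tfrac1n\sum_{x\in\Lambda_n}H'(\tfrac{x}{n})^2+o(1)\longrightarrow\tfrac{C_\alpha}{2}\|\nabla H\|_{L^2([0,1])}^2$. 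Together these give $\langle\mathcal{M}^n(H)\rangle_t\to t\,C_\alpha\|\nabla H\|_{L^2([0,1])}^2/2$ in $L^2(\P)$ for each fixed $t$, and since $t\mapsto\langle\mathcal{M}^n(H)\rangle_t$ is nondecreasing while the limit is continuous, the convergence is automatically uniform on $[0,T]$, which yields \emph{(ii)}.

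I do not expect a serious obstacle. The one computation that requires care is the long-jump Riemann-sum and Taylor-remainder analysis of the symmetric bulk term, but this is exactly the mechanism already set up in the proof of Lemma \ref{lem: A^n_1} --- in particular the truncation of jumps of length exceeding $n$ and the $\alpha>4$ / $\alpha=4$ / $\alpha<4$ trichotomy for the remainder --- and one only has to make sure the boundary contributions produced by jumps leaving $\Lambda_n$ are tracked precisely as there.
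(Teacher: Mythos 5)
Your proposal is correct and follows essentially the same route as the paper: both verify the martingale FCLT conditions (vanishing maximal jumps of order $n^{-1/2}$, plus convergence of the predictable quadratic variation to $tC_\alpha\|\nabla H\|_{L^2([0,1])}^2/2$), with the limit identified through the same Taylor/Riemann-sum analysis of the symmetric bulk term and the same $n^{2-\alpha}$, $n^{\theta-\gamma}$, $n^{\theta-2}(\log n)$ bounds for the boundary and asymmetric contributions. The only (harmless) difference is organizational: the paper splits condition (iii) into convergence of $\E[\langle\mathcal{M}^n(H)\rangle_t]$ plus a variance estimate with the centered functions $M_{x,y}$, $N_{x,y}$, $Z_x$, whereas you dispose of the asymmetric and boundary pieces by deterministic domination and only estimate the variance of the symmetric bulk fluctuation via $(\eta_x-\eta_y)^2=\tfrac12-2\bar\eta_x\bar\eta_y$.
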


\subsection{Concluding the proof of Theorem \ref{thm: fluctuation}} In Section \ref{sec: tightness}, we shall prove that the sequence of processes $\{\mathcal{Y}^n_t, t \in [0,T]\}$ is tight in the space $\mathcal{C} ([0,T]; \mathcal{S}^\prime)$.  Let $\{\mathcal{Y}_t, t \in [0,T]\}$ be some limit point of the process  $\{\mathcal{Y}^n_t, t \in [0,T]\}$ along some subsequence, and we denote this converging subsequence still by $\{n\}$ for convenience. In the rest of this section, we shall show that the limit point $\{\mathcal{Y}_t, t \in [0,T]\}$ satisfies the conditions in Definition \ref{def ou 2} if $\theta < 3/2$, and in Definition  \ref{def burgers 2} if $\theta = 3/2$.  Finally, Theorem \ref{thm: fluctuation} follows  from the uniqueness of solutions to \eqref{ou} and \eqref{burgers}. 

\subsubsection{The case $\theta < 3/2$} In the last subsection, we have shown that, for any $H \in \mathcal{S}$,
\[\mathcal{M}^n_t (H) = \mathcal{Y}^n_t (H) - \mathcal{Y}^n_0 (H) - C_\alpha \int_0^t \mathcal{Y}^n_s (\Delta H) ds - \int_0^t \sum_{i = 2}^5 A^{n,i}_s (H) ds - \mathcal{E}_t^{n,1}(H),\]
	where $\mathcal{E}_t^{n,1}(H) = \int_0^t A_s^{n,1}(H) ds - C_\alpha \int_0^t \mathcal{Y}_s^n(\Delta H) ds$ and
	\[\lim_{ n \rightarrow \infty} \E \Big[ \sup_{0 \leq t \leq T} \Big(\int_0^t \sum_{i = 2}^5 A^{n,i}_s (H) ds + \mathcal{E}_t^{n,1}(H) \Big)^2\Big] = 0.\]
Letting $n \rightarrow \infty$, the limiting point satisfies that
\[\mathcal{M}_t (H) = \mathcal{Y}_t (H) - \mathcal{Y}_0 (H) - C_\alpha \int_0^t \mathcal{Y}_s (\Delta H) ds\]
is a martingale. Moreover, by Proposition \ref{prop: martingale convergence}, 
\[\mathcal{M}_t (H)^2 -  \frac{t C_\alpha}{2} \| \nabla H \|_{L^2([0,1])}^2\]
is also a martingale.  This verifies condition $(2)$ in Definition \ref{def ou 2}.  The first condition in Definition \ref{def ou 2} follows directly from the fact that the process $\eta(t)$ is stationary. At last, for condition $(3)$, it suffices to prove the following result.

\begin{lemma}\label{lem: boundary condition} For $u \in \{0,1\}$,
	\[\lim_{\varepsilon \rightarrow 0}\limsup_{n \rightarrow \infty} \E \Big[ \sup_{0 \leq t \leq T} \Big( \int_0^t \mathcal{Y}^n_s (\iota_{\varepsilon,u}) ds \Big)^2\Big] = 0.\]
\end{lemma}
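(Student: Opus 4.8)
The plan is to realize $\mathcal{Y}^n_s(\iota_{\varepsilon,u})$ as a mean-zero additive functional of the stationary process and to control its time-integral by a negative Sobolev norm through the Kipnis--Varadhan inequality; the smallness of the resulting bound will come entirely from the Dirichlet boundary condition built into the reservoir part of the generator. Fix $u\in\{0,1\}$ and note that $\mathcal{Y}^n_s(\iota_{\varepsilon,u}) = V^n_{\varepsilon,u}(\eta(s))$, where (suppressing the harmless rounding of $\varepsilon n$ to an integer)
\[
V^n_{\varepsilon,0}(\eta) = \frac{1}{\varepsilon\sqrt{n}}\sum_{1\le x<\varepsilon n}\bar{\eta}_x, \qquad
V^n_{\varepsilon,1}(\eta) = \frac{1}{\varepsilon\sqrt{n}}\sum_{n-\varepsilon n\le x\le n-1}\bar{\eta}_x.
\]
This is a mean-zero function in $L^2(\nu_{1/2})$, and since $\eta(\cdot)$ is stationary with invariant measure $\nu_{1/2}$, the Kipnis--Varadhan inequality, in its standard form including a supremum over $[0,T]$, reduces the statement to the bound $\|V^n_{\varepsilon,u}\|_{-1,n}^2\le C\varepsilon$, uniformly in $n$, where $\|\cdot\|_{-1,n}$ is the $H_{-1}$ norm associated with the symmetric part $S_n:=\tfrac12(L_n+L_n^\ast)$ of $L_n$ in $L^2(\nu_{1/2})$. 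Since $\nu_{1/2}$ is reversible for $L_s$ and invariant for $L_a$, one has $S_n = n^2 L_s + n^\theta S_a$ with $S_a := \tfrac12(L_a+L_a^\ast)$ nonpositive, hence $-S_n\ge n^2(-L_s)$ as quadratic forms; the substitution $h\mapsto h/n^2$ in the variational formula for $\|\cdot\|_{-1,n}$ then gives $\|V^n_{\varepsilon,u}\|_{-1,n}^2 \le n^{-2}\,\|V^n_{\varepsilon,u}\|_{-1,L_s}^2$, so it remains to show $\|V^n_{\varepsilon,u}\|_{-1,L_s}^2\le C\varepsilon n^2$.

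For the latter I would use that $V^n_{\varepsilon,u}$ is linear in $\bar\eta$ and that $-L_s$ preserves the polynomial degree of a local function, so the supremum in the variational formula for $\|V^n_{\varepsilon,u}\|_{-1,L_s}^2$ may be restricted to test functions $\phi=\sum_{z\in\Lambda_n}c_z\bar{\eta}_z$. A direct computation using $E_{\nu_{1/2}}[\bar{\eta}_x\bar{\eta}_y]=\tfrac14\mathbf{1}_{\{x=y\}}$ shows that $\langle\phi,(-L_s)\phi\rangle_{\nu_{1/2}} = \tfrac14\big(\sum_{z,w\in\Lambda_n}s_\alpha(z-w)(c_z-c_w)^2 + \sum_{z\in\Lambda_n}c_z^2\,r_\alpha(z)\big)$, with $r_\alpha$ as in \eqref{def-r-alpha} and the second sum encoding the Dirichlet boundary condition. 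I would bound this Dirichlet form below by $\tfrac14\sum_{z=0}^{n-1}(c_{z+1}-c_z)^2$ (convention $c_0=c_n=0$), keeping only the nearest-neighbour bonds, for which $s_\alpha(1)=1$, together with the reservoir contributions at $z=1$ and $z=n-1$, for which $r_\alpha\ge1$. The remaining optimization is explicit: its value equals $\langle\beta,A^{-1}\beta\rangle$, with $A$ the discrete Dirichlet Laplacian on $\Lambda_n$ and $\beta$ the source vector supported on the $\varepsilon n$ sites adjacent to the boundary point $u$. Inserting the explicit Green's function $A^{-1}(x,y)=n^{-1}(x\wedge y)(n-x\vee y)$ and using that $x\wedge y\le\varepsilon n$ (resp. $n-x\vee y\le\varepsilon n$) on the support of $\beta$ when $u=0$ (resp. $u=1$) yields $\langle\beta,A^{-1}\beta\rangle\le C\varepsilon n^2$. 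Assembling the estimates gives $\E\big[\sup_{0\le t\le T}(\int_0^t\mathcal{Y}^n_s(\iota_{\varepsilon,u})\,ds)^2\big]\le CT\varepsilon$ for every $n$, and the claim follows by letting $n\to\infty$ and then $\varepsilon\to0$.

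The step I expect to require the most care is the lower bound on the Dirichlet form: it is precisely the reservoir term $\sum_z c_z^2 r_\alpha(z)$ that pins $c$ to zero at the boundary and thereby turns the optimization into one for the Dirichlet Laplacian, whose Green's function vanishes near the endpoints — this is exactly what makes the $H_{-1}$ norm of the indicator of a boundary strip of width $\varepsilon$ of order $\varepsilon n^2$ (hence $O(\varepsilon)$ after dividing by $n^2$) rather than divergent, as it would be in the periodic setting. Beyond the sign $S_a\le 0$, the asymmetric part of the dynamics plays no role here, so the argument is essentially the one used for the purely symmetric model in \cite{bernardin2022equilibrium}.
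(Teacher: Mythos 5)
Your proposal is correct, and it reaches the same bound $CT\varepsilon$ (uniformly in $n$) by a genuinely different mechanism than the paper. The paper first splits $\mathcal{Y}^n_s(\iota_{\varepsilon,0})$ into the telescoped bulk part $\tfrac{1}{\varepsilon\sqrt n}\sum_{x\le\varepsilon n}(\bar\eta_x-\bar\eta_1)$ plus the boundary term $\sqrt n\,\bar\eta_1$, applies the Kipnis--Varadhan bound to each, and then handles the variational expression by explicit changes of variables ($\eta\mapsto\eta^{y,y+1}$ along nearest-neighbour bonds, $\eta\mapsto\eta^1$ at the boundary) together with Young's inequality, absorbing everything into $D_n(f)$ and leaving $C\varepsilon+C/n$. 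You instead apply Kipnis--Varadhan once to the full functional, discard the (nonnegative) asymmetric Dirichlet form, restrict the variational problem for the $L_s$-based $H_{-1}$ norm to the first chaos, and evaluate the resulting finite-dimensional optimization exactly via the Green's function $(x\wedge y)(n-x\vee y)/n$ of the discrete Dirichlet Laplacian; its vanishing near the endpoints is what produces the factor $\varepsilon$. Your chaos-restriction step is valid but is the one place you should argue explicitly: the exchange part of $L_s$ permutes occupation variables and the reservoir flips send $\bar\eta_x\mapsto-\bar\eta_x$ precisely because the reservoir density is $1/2$, so $L_s$ preserves the degree of monomials in $\bar\eta$ and the supremum may indeed be taken over linear test functions — this is exactly where $\rho=1/2$ enters your argument, whereas the paper's route does not need degree preservation at all. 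In exchange, your computation is more structural: it gives an essentially sharp evaluation of the negative Sobolev norm and makes transparent that the smallness comes from the Dirichlet Green's function vanishing at the boundary, while the paper's telescoping-plus-Young argument is more robust (it would survive at other reservoir densities and for functionals not confined to a single chaos). Minor points to tidy up: state the KV inequality with the symmetric part of $L_n$ (as the paper does), track the factor $2$ versus the ordered-pair convention in $\langle\phi,-L_s\phi\rangle$, and note the harmless integer rounding of $\varepsilon n$; none of these affects the conclusion.
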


\begin{proof}
We only consider the case $u=0$, and the other case can be proved in the same way. Note that
	\[\mathcal{Y}^n_s (\iota_{\varepsilon,0}) = \frac{1}{\varepsilon \sqrt{n}} \sum_{x=1}^{\varepsilon n} \bar{\eta}_x.\]
	By Cauchy-Schwarz inequality, it suffices to show that 
	\begin{align}
		\lim_{\varepsilon \rightarrow 0} \limsup_{n \rightarrow \infty} \E  \Big[ \sup_{0 \leq t \leq T} \Big( \int_0^t \frac{1}{\varepsilon \sqrt{n}} \sum_{x=1}^{\varepsilon n} (\bar{\eta}_x - \bar{\eta}_1) ds \Big)^2\Big] = 0,\label{bc 1}\\
		\lim_{n \rightarrow \infty} \E  \Big[ \sup_{0 \leq t \leq T} \Big( \int_0^t \sqrt{n} \bar{\eta}_1 ds \Big)^2\Big] = 0.\label{bc 2}
	\end{align}
	
	We first prove \eqref{bc 1}. By Kipnis-Varadhan inequality \cite[Proposition 6.1, Appendix 1]{klscaling}, the expectation in \eqref{bc 1} is bounded by
	\[20 T \sup_{f} \Big\{ \int \frac{1}{\varepsilon \sqrt{n}} \sum_{x=1}^{\varepsilon n} (\bar{\eta}_x - \bar{\eta}_1) f(\eta) d \nu_{1/2} - D_n (f) \Big\},\]
    where the supremum is taken over all densities with respect to the measure $\nu_{1/2}$, and $D_n (f)$ is the Dirichlet form of $f$ for the generator $L_n$ under $\nu_{1/2}$,
    \begin{align*}
    	D_n (f) := \langle f, -L_n f \rangle_{\nu_{1/2}} &= \frac{n^2}{2} \sum_{x,y \in \Lambda_n}  s_{\alpha} (x-y)E_{\nu_{1/2}}[(f(\eta^{x,y}) -f(\eta))^2] \\
    	&+ \frac{n^2}{4}\sum_{x \in \Lambda_n, y \notin \Lambda_n}  s_{\alpha} (x-y) E_{\nu_{1/2}}[(f(\eta^{x}) -f(\eta))^2]\\
    	&+ \frac{n^\theta}{2} \sum_{x,y \in \Lambda_n}  p_{\gamma} (y-x)  E_{\nu_{1/2}} [ \eta_x (1-\eta_y)(f(\eta^{x,y}) -f(\eta))^2] \\
    	&+ \frac{n^\theta}{4}\sum_{x \in \Lambda_n, y \leq 0}  p_{\gamma} (x-y) E_{\nu_{1/2}} [ (1-\eta_x) (f(\eta^{x}) -f(\eta))^2]\\
    	&+ \frac{n^\theta}{4}\sum_{x \in \Lambda_n, y \geq n}  p_{\gamma} (y-x) E_{\nu_{1/2}} [ \eta_x( f(\eta^{x}) -f(\eta))^2].
    \end{align*}
     We rewrite
	\begin{align*}
		\int \frac{1}{\varepsilon \sqrt{n}} \sum_{x=1}^{\varepsilon n} (\bar{\eta}_x - \bar{\eta}_1) f(\eta) d \nu_{1/2} &= \int \frac{1}{\varepsilon \sqrt{n}} \sum_{x=1}^{\varepsilon n} \sum_{y=1}^{x-1} (\bar{\eta}_{y+1} - \bar{\eta}_y) f(\eta) d \nu_{1/2}\\
		&= \int \frac{1}{2\varepsilon \sqrt{n}} \sum_{x=1}^{\varepsilon n} \sum_{y=1}^{x-1} (\bar{\eta}_{y+1} - \bar{\eta}_y) \big( f(\eta) - f(\eta^{y,y+1}) \big) d \nu_{1/2},
	\end{align*}
	where we made the transformation $\eta \mapsto \eta^{y,y+1}$ in the last identity. By Young's inequality, the last expression is bounded by
	\begin{align*}
		\int \frac{n}{2 \varepsilon} \sum_{x=1}^{\varepsilon n} \sum_{y=1}^{x-1}   \big( f(\eta) - f(\eta^{y,y+1}) \big)^2 d \nu_{1/2} + \int \frac{1}{8 \varepsilon n^2 } \sum_{x=1}^{\varepsilon n} \sum_{y=1}^{x-1} (\bar{\eta}_{y+1} - \bar{\eta}_y)^2 d \nu_{1/2}.
	\end{align*}
	Since the first term in the last expression is bounded by $D_n (f)$, and the second one is bounded by $C \varepsilon$, the proof of \eqref{bc 1} is concluded. 
	
	For \eqref{bc 2}, by Kipnis-Varadhan inequality, the expectation is bounded by
	\[20 T \sup_{f} \Big\{ \int \sqrt{n} \bar{\eta}_1 f(\eta) d \nu_{1/2} - D_n (f) \Big\}.\]
	Making the change of variable $\eta \mapsto \eta^1$, and by Young's inequality, the integral inside the above supremum equals
	\begin{align*}
		\frac{\sqrt{n}}{2} \int \bar{\eta}_1 \big( f(\eta) - f(\eta^1)\big) d \nu_{1/2} &\leq \frac{n^2}{4}   \int  \big( f(\eta) - f(\eta^1)\big)^2 d \nu_{1/2} + \frac{1}{4n}\int (\bar{\eta}_1)^2 d \nu_{1/2} \\
		&\leq D_n (f) + C/n,
	\end{align*}
	thus concluding the proof.
\end{proof}

\subsubsection{The case $\theta =3/2$} Compared with the case $\theta < 3/2$, the term $A_s^{n,4}$  does not vanish any more. Note that, for any $H \in \mathcal{S}$,
\[\lim_{n \rightarrow \infty} \frac{1}{n} \sum_{x \in \Lambda_n}  H^\prime (x/n) = 0,\]
and that 
\[\bar{\eta}_x^{\varepsilon n} (s)= n^{-1/2} \mathcal{Y}^n_s (\iota_{\varepsilon,x/n}).\]
Thus, by Proposition \ref{pro: A4},  
\begin{equation}\label{A_n^4-lim-KPZ}
	\lim_{\varepsilon \rightarrow 0} \limsup_{n \rightarrow \infty} \E \Big[ \sup_{0 \leq t \leq T} \Big(\int_0^t A^{n,4}_s - \frac{m}{n} \sum_{x \in \Lambda_n} \mathcal{Y}^n_s (\iota_{\varepsilon,x/n})^2 H^\prime (\tfrac{x}{n})ds\Big)^2\Big] = 0.
\end{equation}
Note that we can further extract a subsequence, still denoted by $\{n\}$, such that
\[\mathcal{A}_{s,t}^\varepsilon := \lim_{ n \rightarrow \infty} - \int_s^t \frac{1}{n} \sum_{x \in \Lambda_n} \mathcal{Y}^n_r (\iota_{\varepsilon,x/n})^2 H^\prime (\tfrac{x}{n})dr\]
exists in distribution.  Using Fatou's Lemma and Proposition \ref{pro: A4},
\begin{equation}
\limsup_{n \rightarrow \infty} \E \Big[ \sup_{0 \leq t \leq T} \Big(\int_s^t A^{n,4}_r dr + m \mathcal{A}_{s,t}^\varepsilon \Big)^2\Big] \leq C(H) (t-s) \varepsilon .
\end{equation}
Thus, for any $\delta < \varepsilon$,
\begin{align*}
	\E \Big[\Big(\mathcal{A}_{s,t}^\varepsilon - \mathcal{A}_{s,t}^\delta\Big)^2 \Big] &\leq 	2 \limsup_{n \rightarrow \infty}  \E \Big[\Big(\mathcal{A}_{s,t}^\varepsilon + m^{-1}\int_s^t A^{n,4}_r dr \Big)^2 \Big] \\
	&\qquad + 2 \limsup_{n \rightarrow \infty}  \E \Big[\Big(\mathcal{A}_{s,t}^\delta + m^{-1}\int_s^t A^{n,4}_r dr \Big)^2 \Big]\\
	& \leq C(H)(t-s) \varepsilon.
\end{align*}
This verifies condition $(2)$ in Definition \ref{def burgers 2}. For condition $(3)$, by \eqref{sep-Ln}, Lemmas \ref{lem: A^n_1} and \ref{lem: A^n_2}, \eqref{A_n^4-lim-KPZ} and the definition of $\mathcal{A}_{s,t}^\varepsilon$ therein, letting $n \rightarrow \infty$ in \eqref{martingale}, we have
\[\mathcal{M}_t (H) = \mathcal{Y}_t (H) - \mathcal{Y}_0 (H) - C_\alpha \int_{0}^{t} \mathcal{Y}_s (\Delta H) ds + m \mathcal{A}_{0,t}^\varepsilon (H)\]
is a martingale with quadratic variation $(tC_\alpha/2) \|\nabla H\|_{L^2 ([0,1])}^2$ for any $\varepsilon > 0$.  We conclude condition $(3)$ by letting $\varepsilon \rightarrow 0$. Conditions $(1)$ and $(4)$ can be proved in the same way as in the case $\theta < 3/2$. For condition $(5)$, observe that $\{\eta_{T-t}, t \in [0,T]\}$ is a Markov process with generator $L_n^* := n^2 L_s + n^\theta L_a^*$, where $L_a^*$ is similar to $L_a$ with $p_\gamma (\cdot)$ replaced by $p_\gamma(-\cdot)$, which is sufficient to conclude the proof.

\section{Proof of Proposition \ref{pro: A4}}\label{sec: pf second order boltzmann gibbs}

\subsection{The case $\theta < 1$}  Recall
\[A^{n,4}_s(H) = -\frac{n^\theta}{\sqrt{n}} \sum_{x,y \in \Lambda_n} a_{\gamma} (y-x) \big[ H(\tfrac{y}{n}) - H(\tfrac{x}{n}) \big] \Bar{\eta}_x\Bar{\eta}_y.\]
By first using Cauchy-Schwarz inequality, and then  developing the square, we bound
\begin{align*}
&\E \Big[ \sup_{0 \leq t \leq T} \Big(  \int_0^t A^{n,4}_s (H) ds \Big)^2 \Big] \leq T^2 E_{\nu_{1/2}} \Big[ \Big( \frac{n^\theta}{\sqrt{n}} \sum_{x,y \in \Lambda_n} a_{\gamma} (x-y) \big[ H(\tfrac{y}{n}) - H(\tfrac{x}{n}) \big] \Bar{\eta}_x\Bar{\eta}_y \Big)^2\Big]\\
&\leq \frac{T^2n^{2\theta}}{n} \sum_{x_i, y_i \in \Lambda_n,\atop i = 1,2} a_\gamma (x_1-y_1) a_\gamma (x_2-y_2) \\
&\qquad \times [H(\tfrac {y_1} n) - H (\tfrac{x_1}{n})] [H(\tfrac {y_2} n) - H (\tfrac {x_2} n)] E_{\nu_{1/2}} [\bar{\eta}_{x_1} \bar{\eta}_{x_2} \bar{\eta}_{y_1} \bar{\eta}_{y_2}].
\end{align*}
Notice that the above expectation is zero unless $x_1 = x_2, y_1 = y_2$ or $x_1 = y_2,x_2 = y_1$. Thus, the last line is bounded by
\begin{align*}
	&\frac{CT^2n^{2\theta}}{n} \sum_{x ,y \in \Lambda_n} a_\gamma (x -y)^2 [H(\tfrac {y} n) - H (\tfrac{x}{n})]^2\\
	&\leq CT^2n^{2\theta-2\gamma-1} \iint_{|u-v| > n^{-1}, |u| \leq 1, |v| \leq 1} |u-v|^{-2\gamma-2} [H(u) - H(v)]^2 du\,dv.
\end{align*}
Since $[H(u) - H(v)]^2 \leq C(H) |u-v|^2$, the last line is bounded by 
\[C(H) T^2 \times \begin{cases}
 n^{2\theta-2\gamma-1} \quad &\text{if }\; \gamma < 1/2,\\
 n^{2\theta-2} \log n \quad &\text{if }\; \gamma = 1/2,\\
 n^{2\theta -2} \quad &\text{if }\; \gamma > 1/2.
\end{cases}\]
Since $\theta < \gamma \wedge 1$, the proof is concluded.

\subsection{The case $1 \leq \theta < 3/2$}   We shall use the multi-scale analysis introduced in \cite{gonccalves2018density}, where the authors studied the density fluctuations for the exclusion process on $\Z$ with long jumps. Here, we need to deal with the boundary terms carefully since the model we considered is defined on the line segment.  Below, let $K = K(n)$ be determined later. The following lemma allows us to restrict the sum $\sum_{x,y \in \Lambda_n}$ in $A_s^{n,4}(H)$ to $\sum_{|x-y| \leq K}$.

\begin{lemma}\label{lem-out-region-K} Under assumption \ref{assump: parameters}, if $K \gg n^{\frac{2\theta - 2}{2\gamma - 1}}$, then
	\[\lim_{n \rightarrow \infty} \E \Big[\sup_{0 \leq t \leq T}   \Big( \int_0^t \frac{n^\theta}{\sqrt{n}} \sum\limits_{\substack{|x - y| > K \\ x,y\in\Lambda_n}} a_\gamma(x-y) [ H(\tfrac{y}{n}) - H(\tfrac{x}{n}) ]
	\bar{\eta}_x \bar{\eta}_y ds \Big)^2 \Big] = 0.\]
\end{lemma}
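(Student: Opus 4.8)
The plan is to reduce the estimate to a static second-moment computation, in the same spirit as the case $\theta<1$ treated in the previous subsection. First I would apply the Cauchy--Schwarz inequality in the time variable and use that $\nu_{1/2}$ is the (stationary) initial law, so that the quantity in the statement is bounded by
\[
T^2\, E_{\nu_{1/2}}\Big[\Big(\frac{n^\theta}{\sqrt n}\sum_{\substack{|x-y|>K\\ x,y\in\Lambda_n}}a_\gamma(x-y)\big[H(\tfrac yn)-H(\tfrac xn)\big]\bar\eta_x\bar\eta_y\Big)^2\Big].
\]
Expanding the square produces a fourfold sum weighted by $E_{\nu_{1/2}}[\bar\eta_{x_1}\bar\eta_{x_2}\bar\eta_{y_1}\bar\eta_{y_2}]$; since the $\bar\eta_\cdot$ are i.i.d.\ and centered under $\nu_{1/2}$ and $x_i\ne y_i$ on the support of $a_\gamma$, this expectation vanishes unless the indices pair up, i.e.\ $x_1=x_2,\,y_1=y_2$ or $x_1=y_2,\,x_2=y_1$. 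Both pairings yield terms of the form $a_\gamma(x-y)^2[H(\tfrac yn)-H(\tfrac xn)]^2$ (in fact with the same sign, by the antisymmetry of $a_\gamma$ and of the increment of $H$), so the display above is bounded by a constant multiple of
\[
\frac{n^{2\theta}}{n}\sum_{\substack{|x-y|>K\\ x,y\in\Lambda_n}}a_\gamma(x-y)^2\big[H(\tfrac yn)-H(\tfrac xn)\big]^2.
\]

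Next I would insert the elementary bounds $|a_\gamma(z)|=\tfrac12|z|^{-1-\gamma}$ for $z\ne0$ and $|H(\tfrac yn)-H(\tfrac xn)|\le \|H'\|_{L^\infty([0,1])}\,|x-y|/n$, valid since $H\in\mathcal S\subset\mathcal C^\infty([0,1])$, and then group the pairs $(x,y)\in\Lambda_n^2$ according to $k=|x-y|$, of which there are at most $n$ for each $k\ge1$. This gives
\[
C\|H'\|_{L^\infty([0,1])}^2\,\frac{n^{2\theta}}{n}\cdot\frac{1}{n^2}\cdot n\sum_{k>K}k^{-2\gamma}
= C\|H'\|_{L^\infty([0,1])}^2\,n^{2\theta-2}\sum_{k>K}k^{-2\gamma}.
\]
By Assumption \ref{assump: parameters} together with $\theta\ge1$ one has $\gamma>1>1/2$, hence $\sum_{k>K}k^{-2\gamma}\le C_\gamma K^{1-2\gamma}$, and the whole quantity is at most $C(H,\gamma)\,n^{2\theta-2}K^{1-2\gamma}$. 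This tends to $0$ precisely when $K^{2\gamma-1}\gg n^{2\theta-2}$, that is, when $K\gg n^{(2\theta-2)/(2\gamma-1)}$, which is the hypothesis of the lemma.

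The computation is routine; the only steps requiring care are verifying that the quadruple correlation collapses to the two diagonal pairings (and that the crossed pairing $x_1=y_2,\,x_2=y_1$ contributes a term of the same order rather than cancelling the diagonal one), and tracking the exponents so that the stated threshold for $K$ emerges cleanly. It is also worth recording, for the multi-scale analysis that follows, that under Assumption \ref{assump: parameters} we have $(2\theta-2)/(2\gamma-1)<1$ (equivalently $\theta<\gamma+\tfrac12$, which holds since $\theta<\gamma$), so that $K$ can be chosen to grow with $n$ while still satisfying $K=o(n)$; in particular the natural constraint $K\le n$ is automatically compatible with the lemma.
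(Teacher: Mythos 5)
Your proposal is correct and follows essentially the same route as the paper: Cauchy--Schwarz in time together with stationarity, collapse of the four-point correlation to the two pairings, and then the bound $|a_\gamma(z)|\le C|z|^{-1-\gamma}$, $|H(\tfrac yn)-H(\tfrac xn)|\le C|x-y|/n$, yielding $C(H)T^2 n^{2\theta-2}K^{1-2\gamma}$, which vanishes exactly under the stated condition on $K$. The only cosmetic difference is that you sum directly over the distance $k=|x-y|$ where the paper compares the sum to the integral $\iint_{|u-v|\ge K/n}|u-v|^{-2\gamma-2}[H(u)-H(v)]^2\,du\,dv$; both give the same bound.
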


\begin{proof}
The proof is similar to the case $\theta < 1$. Thus, we only sketch it. By Cauchy-Schwarz inequality and developing the square, the expectation in the lemma is bounded by
	\begin{align*}
		& CT^2 n ^{2\theta - 1} \sum\limits_{\substack{|x - y| > K \\ x,y\in\Lambda_n}} a_\gamma(y-x)^2 [ H(\tfrac{y}{n}) - H(\tfrac{x}{n}) ]^2\\
		\leq & CT^2 n ^{2\theta - 2\gamma - 1} \iint_{|u - v|\geq \tfrac{K}{n}, |u| \leq 1, |v| \leq 1} |u - v|^{-2\gamma - 2} [ H(u) - H(v) ]^2 dudv.
	\end{align*}
	Under assumption \ref{assump: parameters}, $\gamma > \theta \geq 1$.  Since $(H(u) - H(v))^2 \leq C(H) (u - v)^2$,  the last line is bounded by
	\[ C(H)T^2  \frac{n^{2\theta - 2}}{K^{2\gamma - 1}},\]
	thus concluding the proof.
\end{proof}

It remains to deal with the term 
\[-\int_0^t \frac{n^\theta}{\sqrt{n}} \sum\limits_{\substack{1 \leq |x - y| < K \\ x,y\in\Lambda_n}} a_\gamma(y-x) [ H(\tfrac{y}{n}) - H(\tfrac{x}{n}) ]
\bar{\eta}_x \bar{\eta}_y ds. \]
By Taylor's expansion,
\[ H(\tfrac{y}{n}) - H(\tfrac{x}{n}) =  H'(\tfrac{x}{n}) \frac{y - x}{n}
+ \frac{(y-x)^2}{2n^2} H^{\prime\prime} (u^n_{x,y})\]
for some $u^n_{x,y}$ between $x/n$ and $y/n$. The following lemma shows that the second term on the right-hand side of the last identity is negligible.

\begin{lemma}\label{lem-Taylor-high-order} Under assumption \ref{assump: parameters}, we have
	\[\lim_{n \rightarrow \infty} \E \Big[  \sup_{0 \leq t \leq T} \Big( \int_0^t \frac{n^\theta}{\sqrt{n}} \sum\limits_{\substack{ 1 \leq  |x - y|\leq K \\ x,y\in\Lambda_n}} a_\gamma(x-y) 
	\bar{\eta}_x \bar{\eta}_y
	\frac{(y - x)^2}{n^2}  H^{\prime\prime} (u^n_{x,y}) ds \Big)^2 \Big] = 0.\]
\end{lemma}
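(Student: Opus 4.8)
The plan is to repeat the second-moment estimate already used for the case $\theta<1$ and in the proof of Lemma~\ref{lem-out-region-K}, the point being that the Taylor remainder supplies an extra factor $(y-x)^2/n^2$, i.e.\ an extra $|z|^2 n^{-2}$ per bond of length $z$, which more than compensates for the enlarged range $|z|\le K$. Write $\Phi(\eta)=\frac{n^\theta}{\sqrt n}\sum_{1\le|x-y|\le K,\,x,y\in\Lambda_n} a_\gamma(x-y)\,\bar{\eta}_x\bar{\eta}_y\,\frac{(y-x)^2}{n^2}H''(u^n_{x,y})$, so that the integrand in the statement is $\Phi(\eta(s))$. By the Cauchy--Schwarz inequality in time and the stationarity of $\eta(\cdot)$ under $\nu_{1/2}$,
\[
\E\Big[\sup_{0\le t\le T}\Big(\int_0^t \Phi(\eta(s))\,ds\Big)^2\Big]\le T^2\,E_{\nu_{1/2}}\big[\Phi^2\big].
\]

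Next I would expand $E_{\nu_{1/2}}[\Phi^2]$ into a sum over $x_1,y_1,x_2,y_2\in\Lambda_n$ with $1\le|x_i-y_i|\le K$. Since under $\nu_{1/2}$ the variables $\bar{\eta}_\cdot$ are independent, centred and take values in $\{-\tfrac12,\tfrac12\}$, the four-point function $E_{\nu_{1/2}}[\bar{\eta}_{x_1}\bar{\eta}_{y_1}\bar{\eta}_{x_2}\bar{\eta}_{y_2}]$ vanishes unless the four indices pair up; because $|x_i-y_i|\ge1$ the only surviving configurations are $\{x_1,y_1\}=\{x_2,y_2\}$, each contributing $1/16$. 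Bounding the two such pairings in absolute value, using $|a_\gamma(z)|=\tfrac12|z|^{-1-\gamma}$ for $z\ne0$ together with $\|H''\|_{L^\infty([0,1])}<\infty$ (note $u^n_{x,y}\in(0,1)$ since $x,y\in\Lambda_n$), and counting at most $n$ bonds $(x,x+z)$ in $\Lambda_n$ for each fixed $z$, one arrives at
\[
E_{\nu_{1/2}}\big[\Phi^2\big]\ \le\ C(H)\,n^{2\theta-4}\sum_{1\le|z|\le K}|z|^{2-2\gamma}.
\]

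It then remains to run the case split on $\gamma$. Since moreover $|z|<n$ automatically, the sum $\sum_{1\le|z|\le K}|z|^{2-2\gamma}$ is $O(1)$ when $\gamma>3/2$, $O(\log n)$ when $\gamma=3/2$, and $O(n^{3-2\gamma})$ when $\gamma<3/2$; multiplying by $n^{2\theta-4}$ and using $\theta\le 3/2$ and $\theta<\gamma$ from Assumption~\ref{assump: parameters}, the right-hand side is $O(n^{2\theta-4})$, $O(n^{2\theta-4}\log n)$ or $O(n^{2(\theta-\gamma)-1})$ respectively, all of which vanish as $n\to\infty$. This is a straightforward second-moment computation with no essential obstacle; the only points requiring care are the combinatorics of the surviving pairings and checking that every resulting exponent of $n$ is strictly negative — in particular one should observe that, unlike in Lemma~\ref{lem-out-region-K}, no lower bound on $K$ is needed here.
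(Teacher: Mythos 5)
Your proof is correct and takes essentially the same route as the paper: Cauchy--Schwarz in time plus stationarity, expansion of the square where only the diagonal pairings $\{x_1,y_1\}=\{x_2,y_2\}$ survive, and the resulting bound $C(H)\,n^{2\theta-4}\sum_{1\le|z|\le K}|z|^{2-2\gamma}$ with the same case split in $\gamma$ (the paper merely rewrites that sum as a Riemann integral over $\tfrac1n\le|u-v|\le\tfrac Kn$). One small correction: the conclusion in the cases $\gamma\ge 3/2$ rests on $\theta<2$, which is what Assumption \ref{assump: parameters} actually provides, rather than ``$\theta\le 3/2$ from Assumption \ref{assump: parameters}'' as you wrote, so the argument stands.
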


\begin{proof}
By Cauchy-Schwarz inequality and developing the square, the expectation in the lemma is bounded by
	\begin{align*}
		& CT^2n^{2\theta -1} \sum\limits_{\substack{1 \leq |x - y|\leq K \\ x,y\in\Lambda_n}} a_\gamma(y-x) ^2 \frac{(y - x)^4}{n^4}
	H^{\prime\prime} (u^n_{x,y})^2\\
		\leq & \, C(H)T^2n^{2\theta - 2\gamma - 1}
		\iint_{\tfrac{1}{n} \leq |u - v| \leq \frac{K}{n}, |u| \leq 1, |v| \leq 1} |u - v|^{2 - 2\gamma} dudv\\
		\leq & \, C (H) T^2 \times \begin{cases}
		n^{2\theta - 2 \gamma -1} \quad &\text{if }\; \gamma \in (1,\frac32),\\
		n^{2\theta - 4} \log n \quad &\text{if }\; \gamma =\frac{3}{2},\\
		n^{2\theta - 4}\quad &\text{if }\; \gamma \in (\frac32, \infty).
		\end{cases}
	\end{align*}
We conclude the proof by noting that $\theta < 2 \wedge \gamma$.
\end{proof}

Next, we deal with the term 
\begin{equation}\label{eqn 2}
\int_0^t n^{\theta-3/2} \sum\limits_{\substack{ 1 \leq  |x - y| \leq K \\ x,y\in\Lambda_n}} (x-y)a_\gamma(x-y) H^\prime (\tfrac{x}{n})
\bar{\eta}_x \bar{\eta}_y ds. 
\end{equation}

We first recall the following two propositions, see \cite{gonccalves2014nonlinear} for example. For $f \in L^2(\nu_{1/2})$, the $\|\cdot\|_{-1, n}$ norm is defined by
\[ \|f\|_{-1, n}^2 = \sup\limits_{g \;\textrm{local}}
\left\{ 2\langle f, g \rangle_{\nu_{1/2}} - 
\langle g, -L_n g \rangle_{\nu_{1/2}} \right\}. \]

\begin{proposition}[Kipnis-Varadhan inequality]
For any $f: [0,T] \rightarrow L^2(\nu_{1/2})$,
	\[ \mathbb{E} \Big[ \sup\limits_{0\leq t\leq T} \Big(  \int_0^t f(s, \eta(s)) ds \Big)^2 \Big]
	\leq 14 \int_0^T \| f(t, \cdot) \|_{-1, n}^2 dt.\]
	\end{proposition}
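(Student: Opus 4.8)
The plan is to run the classical Kipnis--Varadhan martingale approximation, adapted to a stationary \emph{non-reversible} Markov process and to a time-dependent integrand. The only point at which the norm $\|\cdot\|_{-1,n}$ enters is an elementary resolvent bound: for every $\lambda>0$, writing $h^\lambda_s := (\lambda-L_n)^{-1}f(s,\cdot)$ one has $L_nh^\lambda_s = \lambda h^\lambda_s - f(s,\cdot)$, and testing the variational definition of $\|\cdot\|_{-1,n}$ against $g=h^\lambda_s$ gives
\[
D_n(h^\lambda_s)\;\le\;\langle f(s,\cdot),h^\lambda_s\rangle_{\nu_{1/2}}\;=\;\langle f(s,\cdot),(\lambda-L_n)^{-1}f(s,\cdot)\rangle_{\nu_{1/2}}\;\le\;\|f(s,\cdot)\|_{-1,n}^2.
\]
Here $D_n(g)=\langle g,-L_ng\rangle_{\nu_{1/2}}$ only feels the symmetric part of $L_n$, which is precisely why non-reversibility is harmless at this step.

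Next I would apply It\^o's formula to $t\mapsto h^\lambda_t(\eta_t)$ and substitute $L_nh^\lambda_s=\lambda h^\lambda_s-f(s,\cdot)$ to obtain the decomposition
\[
\int_0^t f(s,\eta_s)\,ds\;=\;N^\lambda_t\,+\,h^\lambda_0(\eta_0)-h^\lambda_t(\eta_t)\,+\,\lambda\!\int_0^t\!h^\lambda_s(\eta_s)\,ds\,+\,\int_0^t(\partial_sh^\lambda_s)(\eta_s)\,ds,
\]
where $N^\lambda$ is a martingale with $\langle N^\lambda\rangle_t=\int_0^t\Gamma(h^\lambda_s)(\eta_s)\,ds$, $\Gamma$ the carr\'e du champ. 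Stationarity of $\eta_s$ under $\nu_{1/2}$ gives $\E[\langle N^\lambda\rangle_T]=2\int_0^T D_n(h^\lambda_s)\,ds\le 2\int_0^T\|f(s,\cdot)\|_{-1,n}^2\,ds$, so Doob's maximal inequality controls $\E[\sup_{t\le T}(N^\lambda_t)^2]$ by a constant times $\int_0^T\|f(s,\cdot)\|_{-1,n}^2\,ds$.

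What remains is to absorb the three remainder terms and to send $\lambda\to0$. I would first reduce, by approximation in $L^2([0,T];H_{-1,n})$, to integrands $f$ that are piecewise constant in time (a convenient dense class): this kills the $\partial_s$-term at the cost of finitely many corrections $(h^\lambda_{(i+1)}-h^\lambda_{(i)})(\eta_{t_i})$ at the breakpoints, which together with the boundary terms $h^\lambda_0(\eta_0)-h^\lambda_t(\eta_t)$ and the term $\lambda\int_0^t h^\lambda_s(\eta_s)\,ds$ are the only error contributions. The crucial input is the standard Kipnis--Varadhan/Komorowski--Landim--Olla fact that, because each $f(s,\cdot)$ lies in $H_{-1}$, the functions $h^\lambda_s$ converge as $\lambda\to0$ in the seminorm $D_n(\cdot)^{1/2}$ while $\lambda\|h^\lambda_s\|_{L^2(\nu_{1/2})}^2\to0$; this makes $N^\lambda$ converge in $L^2$ to a martingale and forces the remainder to vanish in $L^2$, uniformly in $t\in[0,T]$. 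Bookkeeping the constants from Doob's inequality together with the $(a+b)^2\le 2a^2+2b^2$ splittings produces the stated factor $14$, and one then removes the piecewise-constant restriction by the usual limiting argument, using the already-established inequality on the dense class to get a Cauchy sequence in $L^2(\P;\mathcal C([0,T]))$.

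The main obstacle is exactly this last step: keeping the error terms under control \emph{uniformly in $t$}, so that the running supremum survives the passage $\lambda\to0$, in a setting where the absence of reversibility rules out the spectral theorem. The resolvent identity, the carr\'e-du-champ computation, Doob's inequality and the density reductions are all routine. In practice, since $L_n$, its symmetric-plus-drift structure and the stationarity of $\nu_{1/2}$ place us squarely within its hypotheses, one simply invokes the time-dependent Kipnis--Varadhan estimate of \cite[Appendix 1]{klscaling} (or \cite{gonccalves2014nonlinear}) as a black box.
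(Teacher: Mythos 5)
The paper does not prove this proposition at all: it is recalled as a standard estimate with a pointer to \cite{gonccalves2014nonlinear} and \cite[Appendix 1]{klscaling}, and your proposal, after a correct sketch of the classical resolvent/martingale-decomposition route (in particular the variational bound $D_n(h^\lambda_s)\le\langle f(s,\cdot),h^\lambda_s\rangle_{\nu_{1/2}}\le\|f(s,\cdot)\|_{-1,n}^2$ is right, and you correctly note that only the symmetric part of $L_n$ enters), ultimately invokes exactly that time-dependent Kipnis--Varadhan estimate as a black box, which is the same approach the paper takes. The only caveat is that the bookkeeping yielding the explicit constant $14$ and the uniform-in-$t$ control of the remainder as $\lambda\to0$ are asserted rather than carried out, but since you defer to the cited reference for precisely these points, your proposal is consistent with the paper.
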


\begin{proposition}
	
	Let $m\in\mathbb{N}$ and $k_0 < \cdots < k_m$ be an integer sequence. Suppose $\{ f_1, \cdots, f_m\}$ is a sequence of local functions such that ${\rm supp}(f_i)\subset \{k_{i-1} + 1, \cdots, k_i\}$ for any $i \in \{1, \cdots, m\}$. Set $l_i = k_i - k_{i-1}$ and suppose $\int f_i \,d\nu_{\rho} = 0$ for any $i=1, \cdots, m$ and any $\rho \in [0,1]$. Then, there  exists some $\kappa>0$ such that 
	\[ \| f_1 + \cdots + f_m \|_{-1, n}^2 
	\leq \kappa \sum\limits_{i=1}^m \frac{l_i^2}{n^2}\int f_i^2 d\nu_{1/2}. \]
\end{proposition}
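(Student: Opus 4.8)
The plan is to argue directly from the variational formula $\|f\|_{-1,n}^2=\sup_{g}\{2\langle f,g\rangle_{\nu_{1/2}}-\langle g,-L_ng\rangle_{\nu_{1/2}}\}$, exploiting the spectral gap of symmetric exclusion on a single block. First I would decode the hypothesis: for a local function $f_i$ supported in $B_i:=\{k_{i-1}+1,\dots,k_i\}$, one has
\[\int f_i\,d\nu_\rho=\sum_{k=0}^{l_i}\rho^{k}(1-\rho)^{l_i-k}\sum_{\#\xi=k}f_i(\xi),\]
a linear combination of the Bernstein basis polynomials, so its identical vanishing is equivalent to $\sum_{\#\xi=k}f_i(\xi)=0$ for every $k$, i.e.\ to $E_{\nu_{1/2}}[f_i\mid N_{B_i}=k]=0$ for all $k$, where $N_{B_i}$ denotes the number of particles in $B_i$. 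Equivalently, $f_i$ is orthogonal in $L^2(\nu_{1/2})$ to every function of $(N_{B_i},\eta|_{B_i^c})$.

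Next I fix a local function $g$ and estimate $2\langle f,g\rangle_{\nu_{1/2}}-\langle g,-L_ng\rangle_{\nu_{1/2}}=\sum_i 2\langle f_i,g\rangle_{\nu_{1/2}}-\langle g,-L_ng\rangle_{\nu_{1/2}}$. Writing $\bar g_i:=E_{\nu_{1/2}}[g\mid N_{B_i},\eta|_{B_i^c}]$, the orthogonality just noted gives $\langle f_i,g\rangle_{\nu_{1/2}}=\langle f_i,g-\bar g_i\rangle_{\nu_{1/2}}$, so Young's inequality yields, for any $A_i>0$,
\[2\langle f_i,g\rangle_{\nu_{1/2}}\le A_i\,E_{\nu_{1/2}}[f_i^2]+A_i^{-1}\,E_{\nu_{1/2}}\!\big[\mathrm{Var}\big(g\mid N_{B_i},\eta|_{B_i^c}\big)\big].\]
Conditionally on $(N_{B_i},\eta|_{B_i^c})$ the law of $\eta|_{B_i}$ is the uniform measure on configurations of $B_i$ with the prescribed particle number, which is reversible for the nearest-neighbour symmetric exclusion on the segment $B_i$ of $l_i$ sites; that dynamics has spectral gap bounded below by $c\,l_i^{-2}$, \emph{uniformly in the particle number}. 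Hence the conditional variance is at most $C l_i^2\sum_{x,x+1\in B_i}E_{\nu_{1/2}}[(g(\eta^{x,x+1})-g(\eta))^2\mid N_{B_i},\eta|_{B_i^c}]$, and after taking expectations,
\[E_{\nu_{1/2}}\!\big[\mathrm{Var}\big(g\mid N_{B_i},\eta|_{B_i^c}\big)\big]\le C l_i^2\sum_{x,x+1\in B_i}E_{\nu_{1/2}}\big[(g(\eta^{x,x+1})-g(\eta))^2\big].\]

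It then remains to sum over $i$ and use that the blocks $B_i$ are pairwise disjoint. Choosing $A_i=2Cl_i^2/n^2$, the error terms add up to $\tfrac{n^2}{2}\sum_i\sum_{x,x+1\in B_i}E_{\nu_{1/2}}[(g(\eta^{x,x+1})-g(\eta))^2]$, which by disjointness is at most $\tfrac{n^2}{2}\sum_{x=1}^{n-2}E_{\nu_{1/2}}[(g(\eta^{x,x+1})-g(\eta))^2]$, and this in turn is $\le\tfrac12\langle g,-L_ng\rangle_{\nu_{1/2}}$: indeed $s_\alpha(\pm1)=1$ and the Dirichlet form $D_n(g)=\langle g,-L_ng\rangle_{\nu_{1/2}}$ is a sum of nonnegative contributions whose symmetric nearest-neighbour part alone equals $n^2\sum_{x=1}^{n-2}E_{\nu_{1/2}}[(g(\eta^{x,x+1})-g(\eta))^2]$. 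Consequently $2\langle f,g\rangle_{\nu_{1/2}}-\langle g,-L_ng\rangle_{\nu_{1/2}}\le\sum_iA_i\,E_{\nu_{1/2}}[f_i^2]=2C\sum_i\tfrac{l_i^2}{n^2}\,E_{\nu_{1/2}}[f_i^2]$, and taking the supremum over $g$ gives the assertion with $\kappa=2C$. The one genuinely nontrivial ingredient here is the $O(l^{-2})$, uniform-in-density lower bound for the spectral gap of nearest-neighbour symmetric exclusion on $\{1,\dots,l\}$ --- a classical fact, reducible by a path/comparison argument to the Diaconis--Shahshahani estimate; everything else is bookkeeping, the only point requiring care being that each $B_i\subset\Lambda_n$, so that the bonds invoked in the spectral gap genuinely occur among the jumps of $L_n$.
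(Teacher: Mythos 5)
Your argument is correct, and it is essentially the standard proof of this estimate: the paper does not prove the proposition itself but recalls it from the cited reference (Gon\c{c}alves--Jara), where the argument is exactly the one you give --- decode the hypothesis $\int f_i\,d\nu_\rho=0$ for all $\rho$ as vanishing of the conditional expectation given the particle number in the block, apply Young's inequality with the projection onto $\sigma(N_{B_i},\eta|_{B_i^c})$, control the conditional variance by the uniform-in-density $O(l_i^{-2})$ spectral gap of nearest-neighbour symmetric exclusion on the block, and absorb the resulting bond terms into the nearest-neighbour part of $D_n(g)$ using disjointness of the blocks. Your bookkeeping (choice of $A_i$, the factor $n^2$ per nearest-neighbour bond coming from $n^2L_s$ with $s_\alpha(\pm1)=1$, and nonnegativity of the remaining contributions to $\langle g,-L_ng\rangle_{\nu_{1/2}}$) is accurate, so no gap remains beyond the classical spectral gap input you explicitly flag.
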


Combining the last two propositions, we have:

\begin{proposition}\label{pro--1-morm-est}
	
	Let $\{ f_1, ... , f_m \}$ be as in the last Proposition. Then for $f = f_1 + \cdots + f_m$,
	\[ \mathbb{E} \Big[ \sup\limits_{0\leq t\leq T} \Big(  \int_0^t f(s, \eta(s)) ds \Big)^2 \Big]
	\leq 14 \kappa \int_0^T \sum\limits_{i=1}^m \frac{l_i^2}{n^2}\int f_i^2(s, \eta) d\nu_{1/2}ds.\]
	
\end{proposition}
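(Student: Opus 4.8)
The statement is a direct consequence of the two preceding propositions, applied in succession. The plan is as follows. First I would apply the Kipnis--Varadhan inequality to the time-dependent function $f(s,\cdot) = \sum_{i=1}^m f_i(s,\cdot)$, which yields immediately
\[
\mathbb{E} \Big[ \sup_{0\leq t\leq T} \Big( \int_0^t f(s,\eta(s))\,ds \Big)^2 \Big] \leq 14 \int_0^T \| f(t,\cdot) \|_{-1,n}^2 \, dt .
\]
So it remains to control $\| f(t,\cdot) \|_{-1,n}^2$ for each fixed $t$.

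The key observation is that, for every fixed $t \in [0,T]$, the collection $\{ f_1(t,\cdot), \dots, f_m(t,\cdot) \}$ still satisfies all the structural hypotheses of the previous proposition: the supports are contained in the consecutive blocks $\{ k_{i-1}+1, \dots, k_i \}$, and each $f_i(t,\cdot)$ has zero mean with respect to $\nu_\rho$ for every $\rho \in [0,1]$, since these properties do not involve the time variable at all. Hence that proposition applies pointwise in $t$ and gives
\[
\| f(t,\cdot) \|_{-1,n}^2 \leq \kappa \sum_{i=1}^m \frac{l_i^2}{n^2} \int f_i^2(t,\eta)\, d\nu_{1/2} .
\]
Substituting this bound into the time integral above and collecting the constant $14\kappa$ yields the claimed estimate.

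The only point requiring any care is the (routine) measurability and integrability in $t$ needed to integrate the pointwise bound over $[0,T]$, which is guaranteed by the standing regularity assumptions on $f$; there is no substantive obstacle in this argument, as it is purely a concatenation of the two stated propositions.
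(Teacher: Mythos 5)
Your proposal is correct and follows exactly the paper's reasoning: the paper states this proposition with no further argument beyond ``combining the last two propositions,'' which is precisely your concatenation of the Kipnis--Varadhan inequality with the pointwise-in-$t$ application of the $\|\cdot\|_{-1,n}$ bound for functions with disjoint supports. Nothing is missing.
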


In order to deal with \eqref{eqn 2}, we divide the region $\{(x,y): 1 \leq  |x-y| \leq K, x,y \in \Lambda_n\}$ into the following four parts (see Figure \ref{fig: 1}):
\begin{equation}\label{def-region}
	\begin{aligned}
		& \textbf{I} := \{ (x,y) \in \mathbb{N}^2; x = 1, ..., n - K - 1, \;
		y = x + 1, ..., x + K\},\\
		& \textbf{II} := \{ (x,y) \in \mathbb{N}^2; y = 1, ..., n - K - 1, \;
		x = y + 1, ..., y + K\},\\
		& \textbf{III} := \{ (x,y) \in \mathbb{N}^2; x = n - K, ..., n - 2, \;
		y = x + 1, ..., n - 1\},\\
		& \textbf{IV} := \{ (x,y) \in \mathbb{N}^2; y = n - K, ..., n - 2, \;
		x = y + 1, ..., n - 1\}.
	\end{aligned}
\end{equation}
We remark that the main parts are $\textbf{I}$ and $\textbf{II}$, and the boundary parts are $\textbf{III}$ and $\textbf{IV}$. We only deal with  $\textbf{I}$ and $\textbf{III}$, and the cases for $\textbf{II}$ and $\textbf{IV}$ are similar.

\begin{figure}[htbp]
\begin{tikzpicture}[scale=0.9]
	\draw[->] (-1,0) -- (4,0)node[below]{$x$};
	\draw[->] (0,-1) -- (0,4)node[left]{$y$};
	\draw (0,3) -- (3,3) -- (3,0);
	\draw (0,1) -- (2,3);
	\draw (1,0) -- (3,2);
	\draw (0,0) -- (3,3);
	\draw (2,3) -- (2,2) -- (3,2);
	\node at (1,1.5) {\small $\textbf{I}$};
	\node at (2,1.5) {\small $\textbf{II}$};
	\node at (2.3,2.7) {\small $\textbf{III}$};
	\node at (2.7,2.3) {\small $\textbf{IV}$};
\end{tikzpicture}
\caption{Four parts of the region $\{(x,y): 1 \leq  |x-y| \leq K, x,y \in \Lambda_n\}$.}
\label{fig: 1}
\end{figure}
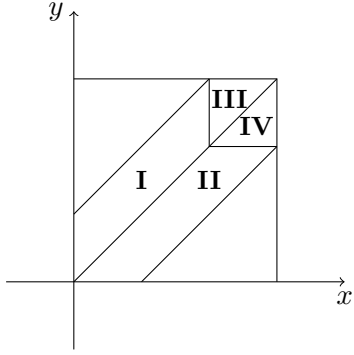

For any $2 \leq \ell \leq n-1$, $x \leq n-1-\ell$, $\eta \in \mc{X}_n$, define $\psi_x^\ell: \mc{X}_n \rightarrow \mathbb{R}$ as
\[ \psi_x^\ell (\eta) = E_{\nu_{1/2}} [\bar{\eta}(x)\bar{\eta}(x+1) | \eta^\ell (x)], \]
where 
\[ \eta^\ell ( x ) = \frac{1}{\ell} \sum\limits_{i=0}^{\ell-1} \eta( x + i). \]
Direct calculations show that
\[\psi_x^{\ell} = \frac{\ell}{\ell -1} \Big\{  \big( \bar{\eta}^{\ell}_x \big)^2 - \frac{1}{4\ell}  \Big\}.\]
Then, it is easy to see that there is some constant $C>0$ such that
\begin{equation}\label{psi-E2-est}
	\int \psi_x^\ell (\eta)^2d\nu_{1/2} \leq \frac{C}{\ell^2}.
\end{equation}

We first consider the situation when $(x, y) \in \textbf{I}$.  Then, we can rewrite
\begin{align*}
	\int_0^t n^{\theta-3/2} \sum\limits_{(x,y) \in \mathbf{I}} (x-y)a_\gamma(x-y) H^\prime (\tfrac{x}{n})
	\bar{\eta}_x \bar{\eta}_y ds = 	\int_0^t n^{\theta-3/2}  \sum_{x=1}^{n-1-K} \sum_{y=1}^K ya_\gamma(y) H^\prime (\tfrac{x}{n})
	\bar{\eta}_x \bar{\eta}_{x+y} ds .
\end{align*}
For the convenience of calculation, suppose $(n - 1)/K \in \mathbb{N}$ in the following.  For $j=1,2,\ldots,K$, define 
\begin{equation}\label{def-H-j}
	\mathcal{H}_j = \{ K z + j; z = 0, 1,..., \tfrac{n-1}{K} - 2\}. 
\end{equation}
Then, $\{1,2,\ldots,n-K-1\} = \cup_{j=1}^K \mathcal{H}_j$. By Cauchy-Schwarz inequality,
\begin{align}
	&\E \Big[ \sup_{0 \leq t \leq T} \Big(	\int_0^t n^{\theta-3/2}  \sum_{x=1}^{n-1-K} \sum_{y=1}^K ya_\gamma(y) H^\prime (\tfrac{x}{n})
	\bar{\eta}_x \bar{\eta}_{x+y} ds \Big)^2\Big] \notag \\
	&\leq 2\mathbb{E} \Big[\sup\limits_{0 \leq t \leq T}  \Big(  \int_0^t n^{\theta - \frac32} \sum\limits_{\substack{x\in\mathcal{H}_j \\ j = 1,...,K}}
	\sum\limits_{y = 1}^{K} y a_\gamma(y) \{ \bar{\eta}_x \bar{\eta}_{x+y} - \psi_x^{K}{ (\eta(s))}\} H'(\tfrac{x}{n})ds\Big)^2\Big]\label{eqn 3}\\
	&\quad+ 2\mathbb{E} \Big[ \sup\limits_{0 \leq t \leq T} \Big(  \int_0^t n^{\theta - \frac32} \sum\limits_{\substack{x\in\mathcal{H}_j \\ j = 1,...,K}}
	\sum\limits_{y = 1}^{K} y a_\gamma(y) \psi_x^{K}{ (\eta(s))} H'(\tfrac{x}{n})ds\Big)^2\Big].\label{eqn 4}
\end{align}

We first bound the term \eqref{eqn 4}.

\begin{lemma}\label{lem-one-order-psi}
There is some constant $C = C(H) > 0$ such that
	\[ \mathbb{E} \Big[\sup\limits_{0 \leq t \leq T}  \Big(  \int_0^t n^{\theta - \frac32} \sum\limits_{\substack{x\in\mathcal{H}_j \\ j = 1,...,K}}
	\sum\limits_{y = 1}^{K} y a_\gamma(y) \psi_x^{K}( \eta(s)) H'(\tfrac{x}{n})ds\Big)^2\Big] \leq CT^2\frac{n^{2\theta-2}}{K}. \]
\end{lemma}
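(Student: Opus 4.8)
The plan is to bypass the Kipnis-Varadhan machinery for this particular estimate and instead control the time integral by its crude $L^2$-size. Since the process $\eta(\cdot)$ is stationary with marginal $\nu_{1/2}$, the Cauchy-Schwarz inequality in the time variable gives
\[
\mathbb{E}\Big[\sup_{0\le t\le T}\Big(\int_0^t g(\eta(s))\,ds\Big)^2\Big]\le T^2\,E_{\nu_{1/2}}\big[g(\eta)^2\big],
\]
where $g(\eta):=n^{\theta-3/2}\sum_{j=1}^{K}\sum_{x\in\mathcal{H}_j}\sum_{y=1}^{K}y\,a_\gamma(y)\,\psi_x^{K}(\eta)\,H'(x/n)$, so it suffices to show $E_{\nu_{1/2}}[g(\eta)^2]\le C(H)\,n^{2\theta-2}/K$. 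The whole gain over a naive bound comes from \eqref{psi-E2-est}: having replaced the product $\bar{\eta}_x\bar{\eta}_{x+y}$ by the conditional expectation $\psi_x^{K}$, the squared $L^2(\nu_{1/2})$-size of each summand has dropped from $O(1)$ to $O(K^{-2})$.

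First I would dispose of the inner sum over $y$, which factors out of everything. Since $a_\gamma(y)=\tfrac12 y^{-1-\gamma}$ for $y>0$, one has $\sum_{y=1}^{K}y\,a_\gamma(y)=\tfrac12\sum_{y=1}^{K}y^{-\gamma}\le\tfrac12\sum_{y\ge1}y^{-\gamma}=m/2<\infty$, and the finiteness here is exactly where Assumption \ref{assump: parameters}, which forces $\gamma>\theta\ge1$ in the present regime, enters. Recalling that $\bigcup_{j=1}^{K}\mathcal{H}_j=\{1,\dots,n-K-1\}$, expanding the square gives
\[
E_{\nu_{1/2}}\big[g(\eta)^2\big]=n^{2\theta-3}\Big(\sum_{y=1}^{K}y\,a_\gamma(y)\Big)^{2}\sum_{x,x'=1}^{n-K-1} H'(x/n)\,H'(x'/n)\,E_{\nu_{1/2}}\big[\psi_x^{K}\psi_{x'}^{K}\big].
\]

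The key step would then be the two-point estimate for $E_{\nu_{1/2}}[\psi_x^{K}\psi_{x'}^{K}]$. Each $\psi_x^{K}$ is a local function of the coordinates $\{x,\dots,x+K-1\}$ with $E_{\nu_{1/2}}[\psi_x^{K}]=0$; hence when $|x-x'|\ge K$ the two factors depend on disjoint coordinates, are therefore independent under the product measure $\nu_{1/2}$, and their covariance vanishes, whereas when $|x-x'|<K$ the Cauchy-Schwarz inequality together with \eqref{psi-E2-est} yields $|E_{\nu_{1/2}}[\psi_x^{K}\psi_{x'}^{K}]|\le C/K^2$. Since for each $x$ there are at most $2K$ indices $x'$ with $|x-x'|<K$, the double sum is bounded in absolute value by $\|H'\|_{\infty}^2\cdot(C/K^2)\cdot n\cdot2K=C(H)\,n/K$; combining this with the prefactor $n^{2\theta-3}$ and the bounded factor $(m/2)^2$ produces $E_{\nu_{1/2}}[g(\eta)^2]\le C(H)\,n^{2\theta-2}/K$, as required.

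I do not expect a genuine obstacle here: once \eqref{psi-E2-est} and the summability $\sum_{y}y\,a_\gamma(y)<\infty$ are available, the lemma reduces to a short second-moment computation exploiting the near-independence of the smoothed square fields $\psi_x^{K}$. The only points that demand a little care are the bookkeeping of the powers of $n$ and $K$ — in particular checking that the $K^{-2}$ gained from \eqref{psi-E2-est} is not entirely consumed by the $O(nK)$ pairs with overlapping supports — and keeping in mind that the genuinely delicate part of the treatment of \eqref{eqn 2}, namely the replacement error \eqref{eqn 3}, is handled separately, where one really does need the Kipnis-Varadhan inequality, Proposition \ref{pro--1-morm-est}, and the stronger fact that $\psi_x^{K}$ has mean zero under every $\nu_\rho$ rather than merely under $\nu_{1/2}$.
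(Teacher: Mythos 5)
Your proposal is correct and is essentially the paper's own argument: a stationarity-plus-Cauchy--Schwarz reduction to a static second moment, factoring out $\sum_{y\le K} y\,a_\gamma(y)\le m/2$ (finite since $\gamma>1$), and then exploiting the mean-zero, finitely supported structure of $\psi_x^K$ together with \eqref{psi-E2-est}. The only (cosmetic) difference is in the bookkeeping of the $x$-sum: the paper applies Cauchy--Schwarz over the $K$ congruence classes $\mathcal{H}_j$ and uses exact independence of the $\psi_x^K$ within each class, whereas you expand the full double sum and bound the $O(nK)$ overlapping pairs by $C/K^2$ via Cauchy--Schwarz; both routes give the same $CT^2 n^{2\theta-2}/K$.
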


\begin{proof}
	
	By Cauchy-Schwarz inequality, since $\gamma > 1$, the expectation in the lemma is bounded by
	\begin{align*}
		&T^2 n^{2\theta - 3} \Big( \sum\limits_{y = 1}^{K} y a_\gamma(y) \Big)^2 E_{\nu_{1/2}} \Big[\Big( \sum\limits_{\substack{x\in\mathcal{H}_j \\ j = 1,...,K}} 
		\psi_x^{K}( \eta ) H'(\tfrac{x}{n}) \Big)^2\Big] \\
		\leq & C T^2 n^{2\theta-3} K\sum\limits_{j=1}^{K} E_{\nu_{1/2}} \Big[\Big( \sum\limits_{x\in\mathcal{H}_j} \psi_x^{K}( \eta ) H'(\tfrac{x}{n}) \Big)^2\Big]\\
		\leq  & C T^2 n^{2\theta-3} K\sum\limits_{j=1}^{K} \sum\limits_{x\in\mathcal{H}_j} H'(\tfrac{x}{n})^2
		E_{\nu_{1/2}} \Big[\psi_x^K( \eta )^2\Big].
	\end{align*}
In the last inequality, we used the fact that $\psi_x^K$ and $\psi_y^K$ are independent for $x \neq y \in \mathcal{H}_j$. By \eqref{psi-E2-est}, the last  expression is  bounded by $	CT^2n^{2\theta-2} / K$,
	which ends the proof.
\end{proof}

Next, we bound the term \eqref{eqn 3}. 

\begin{lemma}\label{lem-Ltheta-differ} There exists some constant $C = C(H) > 0$ such that
	\begin{equation}\label{Ltheta-differ}
		\begin{aligned}
			&\mathbb{E} \Big[ \sup\limits_{0 \leq t \leq T} \Big(  \int_0^t n^{\theta - \frac32} \sum\limits_{x = 1}^{n - K - 1} \sum\limits_{y = 1}^K y a_\gamma(y) \{ \bar{\eta}_x \bar{\eta}_{x+y} - \psi_x^{K}( \eta (s))\} H'(\tfrac{x}{n})ds\Big)^2\Big]\\
			& \leq \, C T \left( \frac{K^{4 - 2\gamma}}{n^{4 - 2\theta}}
			+ \frac{K}{n^{4 - 2\theta}} \right).
		\end{aligned}
	\end{equation}
\end{lemma}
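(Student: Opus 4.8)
I would work with the $\|\cdot\|_{-1,n}$-seminorm throughout. By the Kipnis--Varadhan inequality, the quantity to be bounded is at most $14\,T$ times the $\|\cdot\|_{-1,n}^2$-norm of the (time-independent) function $f=n^{\theta-3/2}\sum_{x}H'(\tfrac xn)\sum_{y=1}^{K}ya_\gamma(y)\bigl(\bar\eta_x\bar\eta_{x+y}-\psi_x^K\bigr)$, where $x$ runs over $\{1,\dots,n-K-1\}$. Following the multi-scale scheme of \cite{gonccalves2018density}, assume $K$ is a power of $2$ and $(n-1)/K\in\mathbb N$ (incomplete blocks near $x=n-K-1$ give lower-order corrections). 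For $1\le y\le K$ let $\ell(y)$ be the smallest power of $2$ with $\ell(y)\ge y+1$ (putting all $y\in\{K/2,\dots,K\}$ at the top scale $\ell=K$); then $\ell(y)\asymp y$ and, by exchangeability of $\nu_{1/2}$, $E_{\nu_{1/2}}[\bar\eta_x\bar\eta_{x+y}\mid\eta^{\ell}(x)]=\psi_x^{\ell}$ for every dyadic $\ell$ with $y+1\le\ell\le K$. I would then telescope
\[
\sum_{y=1}^{K}ya_\gamma(y)\bigl(\bar\eta_x\bar\eta_{x+y}-\psi_x^K\bigr)
=\sum_{\ell}F_x^{\ell}+\sum_{\ell<K}c_\ell\bigl(\psi_x^{\ell}-\psi_x^{2\ell}\bigr),
\]
the sums running over dyadic $\ell\in\{2,\dots,K\}$, with $F_x^{\ell}:=\sum_{y:\ell(y)=\ell}ya_\gamma(y)(\bar\eta_x\bar\eta_{x+y}-\psi_x^{\ell})$ and $c_\ell:=\sum_{y:\ell(y)\le\ell}ya_\gamma(y)=\tfrac12\sum_{z\le\ell-1}z^{-\gamma}$, which is bounded uniformly in $\ell$ \emph{precisely because $\gamma>1$} (Assumption~\ref{assump: parameters}). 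Each summand is a local function of support length $O(\ell)$ which, crucially, is centered under $\nu_\rho$ for \emph{every} $\rho$, since $E_{\nu_\rho}[\bar\eta_x\bar\eta_{x+y}]=E_{\nu_\rho}[\psi_x^{\ell}]=(\rho-\tfrac12)^2$ (the computation already used in the paper); this is what allows the $\|\cdot\|_{-1,n}$-estimate of Proposition~\ref{pro--1-morm-est} to be invoked.

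\textbf{The key $L^2$ estimates.} Since $\bar\eta_x^2=\tfrac14$ a.s.\ under $\nu_{1/2}$, for $y\neq y'$ one gets $\int\bar\eta_x\bar\eta_{x+y}\bar\eta_x\bar\eta_{x+y'}\,d\nu_{1/2}=\tfrac14\int\bar\eta_{x+y}\bar\eta_{x+y'}\,d\nu_{1/2}=0$, while $\int\bar\eta_x\bar\eta_{x+y}\psi_x^{\ell}\,d\nu_{1/2}=\int(\psi_x^{\ell})^2\,d\nu_{1/2}=O(\ell^{-2})$ by the tower property and \eqref{psi-E2-est}. Using $ya_\gamma(y)=\tfrac12 y^{-\gamma}\asymp\ell^{-\gamma}$ for the $\asymp\ell$ admissible values of $y$ at scale $\ell$, the diagonal terms dominate and
\[
\int\bigl(F_x^{\ell}\bigr)^2\,d\nu_{1/2}\;\asymp\;\ell\cdot\ell^{-2\gamma}\;=\;\ell^{\,1-2\gamma}.
\]
For the energy blocks I would only need the crude bound $\int(\psi_x^{\ell}-\psi_x^{2\ell})^2\,d\nu_{1/2}\le C\ell^{-2}$ from \eqref{psi-E2-est}, together with the boundedness of $c_\ell$.

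\textbf{Assembling the bound.} At each scale $\ell$, partition $\{1,\dots,n-K-1\}$ into $O(\ell)$ residue classes so that within a class the base points are $\ell$-separated and the blocks fit into consecutive intervals of length $O(\ell)$. Applying the $\|\cdot\|_{-1,n}$-estimate for sums of disjointly supported local functions to each class (with the $\ell^2/n^2$ weight, the factor $n^{2\theta-3}$ from the prefactor, $O(n/\ell)$ base points per class, and the $L^2$ bounds above) and then summing the $O(\ell)$ classes by the triangle inequality for $\|\cdot\|_{-1,n}$ gives $\|n^{\theta-3/2}\sum_xH'(\tfrac xn)F_x^{\ell}\|_{-1,n}\lesssim \ell^{2-\gamma}n^{\theta-2}$ and $\|n^{\theta-3/2}\sum_xH'(\tfrac xn)c_\ell(\psi_x^{\ell}-\psi_x^{2\ell})\|_{-1,n}\lesssim \ell^{1/2}n^{\theta-2}$, with implicit constants depending on $H$. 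Summing over the $O(\log K)$ dyadic scales $\ell\le K$ by the triangle inequality of the \emph{seminorm} $\|\cdot\|_{-1,n}$ (not by a Cauchy--Schwarz that would cost a logarithm): both families are geometric in $\ell$, so the energy sum and (for $\gamma<2$) the $F$-sum are dominated by the top scale $\ell=K$, the $F$-sum is dominated by the smallest scale when $\gamma>2$, and when $\gamma=2$ it contributes a harmless factor $\lesssim\log K$. Hence $\|f\|_{-1,n}^2\lesssim (K^{4-2\gamma}+K)\,n^{2\theta-4}$ (absorbing $(\log K)^2\le K$), and multiplying by $14T$ yields \eqref{Ltheta-differ}.

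\textbf{Main obstacle.} The only delicate point—and the place where the sharp exponent $K^{4-2\gamma}$ is gained rather than the $K^{5-2\gamma}$ that a scale-by-scale or $y$-by-$y$ replacement combined by the triangle inequality would give—is the bundling of the pair replacements into a single block $F_x^{\ell}$ per scale, exploiting the orthogonality $\int\bar\eta_x\bar\eta_{x+y}\bar\eta_x\bar\eta_{x+y'}\,d\nu_{1/2}=0$ for $y\neq y'$, which makes $\int(F_x^{\ell})^2\,d\nu_{1/2}$ of order $\ell^{1-2\gamma}$ instead of order $1$. The remaining work (residue-class partitions, incomplete end blocks, the geometric sums) is routine; note that no one-sided modification of $\psi_x^{\ell}$ is needed here because throughout region $\textbf{I}$ one has $x+y\le n-1$.
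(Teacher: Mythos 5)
Your proposal is correct and is essentially the paper's own argument: the same dyadic multiscale decomposition (per-scale replacement of $\bar\eta_x\bar\eta_{x+y}$ by $\psi_x^{\ell}$, using orthogonality in $y$ to get the $\ell^{1-2\gamma}$ block variance, plus a telescoping of $\psi_x^{\ell}$ up to $\psi_x^{K}$ with coefficients $c_\ell$ uniformly bounded thanks to $\gamma>1$), the same residue-class partitions, and the same Kipnis--Varadhan/disjoint-support estimate of Proposition \ref{pro--1-morm-est}. The only differences are organizational—you apply Kipnis--Varadhan once and sum scales via the triangle inequality for $\|\cdot\|_{-1,n}$, where the paper applies it scale by scale and combines with Minkowski in $L^2(\P)$—and both routes give the same bound $CT(K^{4-2\gamma}+K)n^{2\theta-4}$.
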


\begin{proof}
Without loss of generality, suppose that there are some $l \in \mathbb{N}$ and $k \in \mathbb{N}$ such that $K = 2^{l} k$. Set $K_i = 2^{i}k, i = 1,... ,l$, then $ 0 =: K_0 < K_1 < \cdots < K_l = K$. By Cauchy-Schwarz inequality,  the expectation in the lemma  is bounded by
	\begin{align}\label{est-differ}
		& 2\mathbb{E} \Big[ \sup\limits_{0 \leq t \leq T} \Big( \int_0^t n^{\theta - \frac32} \sum\limits_{x = 1}^{n - K - 1} \sum\limits_{i = 1}^l \sum\limits_{y = K_{i-1} + 1}^{K_i} y a_\gamma(y) \{ \bar{\eta}_x \bar{\eta}_{x+y} - \psi_x^{K_i}( \eta (s))\} H'(\tfrac{x}{n})ds\Big)^2\Big]\notag\\
		& \quad  + 2\mathbb{E} \Big[ \sup\limits_{0 \leq t \leq T} \Big(  \int_0^t n^{\theta - \frac32} \sum\limits_{x = 1}^{n - K - 1}  \sum\limits_{i = 1}^l \sum\limits_{y = K_{i-1} + 1}^{K_i} y a_\gamma(y) \{ \psi_x^{K_i}( \eta (s)) - \psi_x^{K}( \eta (s))\} H'(\tfrac{x}{n})ds\Big)^2\Big].
	\end{align}
	
Using the Minkowski's inequality, the first expectation is bounded by
	\begin{equation}\label{est-differ-1}
			\left( \sum\limits_{i = 1}^l \bigg( \mathbb{E} \Big[ \sup\limits_{0 \leq t \leq T} \Big(  \int_0^t n^{\theta - \frac32} \sum\limits_{x = 1}^{n - K - 1} \sum\limits_{y = K_{i-1} + 1}^{K_i} y a_\gamma(y) \{ \bar{\eta}_x \bar{\eta}_{x+y} - \psi_x^{K_i}( \eta (s))\} H'(\tfrac{x}{n})ds\Big)^2\Big] \bigg)^{1/2} \right)^2.
 \end{equation}
	Since $K = K_l = 2^{l - i} K_i$, we can rewrite the sum $\sum_{x = 1}^{n - K -1}$ as $\sum_{j = 1}^{K_i}\sum_{x \in \mathcal{H}_j^i}$, where
	\begin{equation}\label{def-H-ij}
		\mathcal{H}_j^i = \{ K_i z + j; z = 0, 1, ..., \tfrac{n-1}{K_i} - (2^{l - i} + 1)\}, \quad j = 1, ..., K_i. 
	\end{equation}
	By Cauchy-Schwarz inequality and Proposition \ref{pro--1-morm-est} , we can further bound the expectation in \eqref{est-differ-1} by
	\begin{align}\label{est-A4-differ}
		& K_i n^{2\theta - 3} \sum\limits_{j = 1}^{K_i} \mathbb{E} \Big[  \sup\limits_{0 \leq t \leq T} \Big( \int_0^t  \sum\limits_{x \in \mathcal{H}_j^i} \sum\limits_{y = K_{i-1} + 1}^{K_i} y a_\gamma(y) \{ \bar{\eta}_x \bar{\eta}_{x+y} - \psi_x^{K_i}( \eta (s))\} H'(\tfrac{x}{n})ds\Big)^2\Big]\notag\\
		\leq & C T K_i n^{2\theta - 3} \sum\limits_{j = 1}^{K_i}
	 \sum\limits_{x \in \mathcal{H}_j^i} \frac{K_i^2}{n^2} H'(\tfrac{x}{n})^2 E_{\nu_{1/2}}\Big[\Big( \sum\limits_{y = K_{i-1} + 1}^{K_i} y a_\gamma(y) \{\bar{\eta}_x \bar{\eta}_{x + y} - \psi_x^{K_i}(\eta)\}\Big)^2 \Big].
	\end{align}
	Using \eqref{psi-E2-est}, 
	\begin{align*}
		 E_{\nu_{1/2}}\Big[\Big( \sum\limits_{y = K_{i-1} + 1}^{K_i} y a_\gamma(y) \{\bar{\eta}_x \bar{\eta}_{x + y} - \psi_x^{K_i}(\eta)\}\Big)^2 \Big]
		\leq & 2 \sum\limits_{y = K_{i - 1} + 1}^{K_i} y^2 a_\gamma(y)^2 + \frac{2}{K_i^2} \Big( \sum\limits_{y = K_{i - 1} + 1}^{K_i} y a_\gamma(y) \Big)^2\\
		\leq & 2 K_{i - 1}^{1 - 2\gamma} + \frac{2K_{i - 1}^{2 - 2\gamma}}{K_i^2} \leq C K_{i - 1}^{1 - 2\gamma}.
	\end{align*}
Thus,
	\begin{align*}
		\eqref{est-A4-differ} \leq  C T K_i n^{2\theta - 3} \sum\limits_{x = 1}^{n - K -1} H'(\tfrac{x}{n})^2 \frac{K_i^2}{n^2} K_{i - 1}^{1 - 2\gamma}
		\leq  \frac{C T K_i^3 n^{2\theta - 4}}{K_{i - 1}^{2\gamma - 1}}.
	\end{align*}
Finally, by \eqref{est-differ-1}, the first expectation in \eqref{est-differ} is bounded by 
	\begin{align*}
\left( \sum\limits_{i=1}^l \sqrt{\frac{C T K_i^3 n^{2\theta - 4} }{K_{i - 1}^{2\gamma - 1}}} \right)^2
		 \leq C T n^{2\theta - 4} \left( \sum\limits_{i=1}^l 2^{(2 - \gamma)i} \right)^2
		\leq C T n^{2\theta - 4} 2^{(4 - 2\gamma)l}
		 \leq C T \frac{K^{4 - 2\gamma}}{n^{4 - 2\theta}}.
	\end{align*}

	Now we bound the second expectation in \eqref{est-differ}. We first rewrite the sum inside the expectation as
	\begin{align*}
	&\sum\limits_{x = 1}^{n - K - 1}  \sum\limits_{i = 1}^l \sum\limits_{y = K_{i-1} + 1}^{K_i} y a_\gamma(y) \sum\limits_{j = i}^{l -1}\{ \psi_x^{K_j} - \psi_x^{K_{j + 1}}\} H'(\tfrac{x}{n}) \\
	&= \sum\limits_{x = 1}^{n - K - 1} \sum\limits_{j = 1}^{l -1} \sum\limits_{i = 1}^j \sum\limits_{y = K_{i-1} + 1}^{K_i} y a_\gamma(y) \{ \psi_x^{K_j} - \psi_x^{K_{j + 1}}\} H'(\tfrac{x}{n})\\
	&= \sum\limits_{x = 1}^{n - K - 1} H'(\tfrac{x}{n}) \sum\limits_{j = 1}^{l -1} \{ \psi_x^{K_j} - \psi_x^{K_{j + 1}}\} \sum\limits_{y = 1}^{K_j} y a_\gamma(y).
	\end{align*}
	Using the Minkowski's inequality and noting that $\sum_{y = 1}^{K_j} y a_\gamma(y)$ is uniformly bounded for 
	$j = 1,...,l-1$, the second expectation in \eqref{est-differ}  is bounded by
	\begin{equation}\label{est-differ-2}
		C \left( \sum\limits_{j = 1}^{l - 1} \bigg( \E \Big[ \sup\limits_{0 \leq t \leq T} \Big(  \int_0^t n^{\theta - \frac32} \sum\limits_{x = 1}^{n - K - 1} H'(\tfrac{x}{n}) \{ \psi_x^{K_j} - \psi_x^{K_{j + 1}} \}   ds\Big)^2\Big] \bigg)^{\frac12} \right)^2. 
	\end{equation}
	Rewrite the sum $\sum_{x = 1}^{n - K -1}$ as $\sum_{i = 1}^{K_{j + 1}}\sum_{x \in \mathcal{H}_i^{j + 1}}$, where $\mathcal{H}_i^{j + 1}$ is defined in \eqref{def-H-ij}. Then, by Cauchy-Schwarz inequality,  Proposition \ref{pro--1-morm-est} and \eqref{psi-E2-est},
	\begin{align*}
		& \mathbb{E} \Big[ \sup\limits_{0 \leq t \leq T} \Big(  \int_0^t n^{\theta - \frac32} \sum\limits_{i = 1}^{K_{j + 1}} \sum\limits_{x \in \mathcal{H}_i^{j + 1}} H'(\tfrac{x}{n}) \{ \psi_x^{K_j} - \psi_x^{K_{j + 1}} \}   ds\Big)^2\Big]\\
		\leq & C T n^{2\theta - 3} K_{j + 1} \sum\limits_{i = 1}^{K_{j+1}}   \sum\limits_{x \in \mathcal{H}_i^{j+1}} \frac{K_{j+1}^2}{n^2}  H'(\tfrac{x}{n})^2 E_{\nu_{1/2}}\Big[ \{\psi_x^{K_j} - \psi_x^{K_{j + 1}}\}^2\Big] \\
		\leq & C T  n^{2\theta - 3}  \frac{K_{j+1}^3}{n^2}  \sum\limits_{x = 1}^{n - K - 1}   H'(\tfrac{x}{n})^2 \frac{1}{K_j^2} \\
		\leq & C T \frac{K_{j + 1}}{n^{4 - 2\theta}}.
	\end{align*}
Thus, we bound \eqref{est-differ-2}  by
	\[   C T \left( \sum\limits_{j = 1}^{l - 1} \sqrt{\frac{K_{j + 1}}{n^{4 - 2\theta}}} \right)^2 
\leq   \frac{C T K}{n^{4 - 2\theta}}. \]
This concludes the proof.
\end{proof}

\medspace

Next, we deal with the situation when $(x, y) \in \textbf{III}$.  We have the following bound

\begin{lemma}\label{lem: III} For any $\lambda \in \mathbb{N}_+$, there exists some constant $C=C(\lambda,H)$ such that
	\begin{equation}\label{est-l-III}
		\E \Big[  \sup_{0 \leq t \leq T} \Big( \int_0^t n^{\theta - \frac32} \sum\limits_{x = n - K}^{n - 2} \sum\limits_{y = 1}^{n - 1 - x} y a_\gamma(y) \bar{\eta}_x \bar{\eta}_{x+y}
		H'(\tfrac{x}{n}) ds \Big)^2 \Big] 	\leq C \Big( T \frac{K^{\lambda + 1}}{n^{2 - 2\theta + \lambda}} + T^2 \frac{K^{\lambda + 1}}{n^{4 - 2\theta + \lambda}} \Big). 
	\end{equation}
\end{lemma}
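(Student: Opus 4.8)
The plan is to use two special features of region $\textbf{III}$: it is contained in the boundary block $\{n-K,\dots,n-1\}$, where the test function is extremely small --- since $H\in\mathcal S$ has $H^{(i)}(1)=0$ for all $i$, a Taylor expansion at $u=1$ gives $|H'(\tfrac xn)|\le C(\lambda,H)\big(\tfrac{n-x}{n}\big)^{\lambda}$ for every $\lambda\in\N$ --- and, for each fixed $x\in\{n-K,\dots,n-2\}$, every site $x+y$ with $1\le y\le n-1-x$ lies in the single block $\{x,\dots,n-1\}$ of length $\ell_x:=n-x\ge 2$. I would first apply the second-order Boltzmann--Gibbs splitting
\[
\bar\eta_x\bar\eta_{x+y}=\big\{\bar\eta_x\bar\eta_{x+y}-\psi_x^{\ell_x}\big\}+\psi_x^{\ell_x},\qquad\psi_x^{\ell_x}=E_{\nu_{1/2}}\!\big[\bar\eta_x\bar\eta_{x+1}\mid\eta^{\ell_x}(x)\big],
\]
with the block taken up to the right endpoint. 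Because the law of a block conditioned on its particle number is uniform and does not depend on the density, $E_{\nu_\rho}[\bar\eta_x\bar\eta_{x+y}\mid\eta^{\ell_x}(x)]=\psi_x^{\ell_x}$ for all $\rho$ and all $1\le y\le\ell_x-1$, so $\bar\eta_x\bar\eta_{x+y}-\psi_x^{\ell_x}$ has zero $\nu_\rho$-mean for every $\rho$ and support inside $\{x,\dots,n-1\}$. This writes the integrand in \eqref{est-l-III} as $S_1+S_2$, where
\[
S_1=n^{\theta-\frac32}\!\!\sum_{x=n-K}^{n-2}\!H'(\tfrac xn)\!\!\sum_{y=1}^{n-1-x}\!ya_\gamma(y)\big\{\bar\eta_x\bar\eta_{x+y}-\psi_x^{\ell_x}\big\},\quad S_2=n^{\theta-\frac32}\!\!\sum_{x=n-K}^{n-2}\!H'(\tfrac xn)\,\psi_x^{\ell_x}\!\!\sum_{y=1}^{n-1-x}\!ya_\gamma(y).
\]

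For $S_2$ I would use the crude bound $\E\big[\sup_{0\le t\le T}(\int_0^t S_2\,ds)^2\big]\le T^2E_{\nu_{1/2}}[S_2^2]$. Since $\gamma>1$ the sum $\sum_{y\ge1}ya_\gamma(y)$ is finite, and combining Cauchy--Schwarz with \eqref{psi-E2-est}, which yields $E_{\nu_{1/2}}[\psi_{x_1}^{\ell_{x_1}}\psi_{x_2}^{\ell_{x_2}}]\le C(\ell_{x_1}\ell_{x_2})^{-1}$, with the decay of $H'$,
\[
E_{\nu_{1/2}}[S_2^2]\le C\,n^{2\theta-3}\Big(\sum_{x=n-K}^{n-2}\frac{|H'(\tfrac xn)|}{n-x}\Big)^2\le C(\lambda,H)\,n^{2\theta-3-2\lambda}K^{2\lambda}\le C(\lambda,H)\,n^{2\theta-4-\lambda}K^{\lambda+1},
\]
the last step using $K\le n$ (which holds in all applications of the lemma). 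This produces the term $T^2K^{\lambda+1}/n^{4-2\theta+\lambda}$.

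For $S_1$ I would invoke the Kipnis--Varadhan inequality, $\E\big[\sup_{0\le t\le T}(\int_0^t S_1\,ds)^2\big]\le 14\,T\,\|S_1\|_{-1,n}^2$, and write $S_1=\sum_{x=n-K}^{n-2}g_x$ with $g_x$ the $x$-summand. Each $g_x$ has zero $\nu_\rho$-mean for all $\rho$ and support in the single block $\{(x-1)+1,\dots,n-1\}$ of length $\ell_x=n-x$, so Proposition \ref{pro--1-morm-est} with $m=1$ gives $\|g_x\|_{-1,n}^2\le\kappa\,\ell_x^2 n^{-2}E_{\nu_{1/2}}[g_x^2]$; expanding the square (all cross terms in $y$ equal $-E_{\nu_{1/2}}[(\psi_x^{\ell_x})^2]\le0$, and $\sum_y y^2a_\gamma(y)^2<\infty$ since $\gamma>\tfrac12$) shows $E_{\nu_{1/2}}[g_x^2]\le C\,n^{2\theta-3}H'(\tfrac xn)^2$. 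Hence, by the triangle inequality for the seminorm $\|\cdot\|_{-1,n}$ (immediate from its variational definition) and the decay of $H'$,
\[
\|S_1\|_{-1,n}\le\sum_{x=n-K}^{n-2}\|g_x\|_{-1,n}\le C(\lambda,H)\,n^{\theta-\frac52-\lambda}\sum_{j=2}^{K}j^{\lambda+1}\le C(\lambda,H)\,n^{\theta-\frac52-\lambda}K^{\lambda+2},
\]
so that $\|S_1\|_{-1,n}^2\le C\,n^{2\theta-5-2\lambda}K^{2\lambda+4}\le C\,n^{2\theta-2-\lambda}K^{\lambda+1}$ (again using $K\le n$), which gives the term $TK^{\lambda+1}/n^{2-2\theta+\lambda}$. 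Adding the two estimates via $(a+b)^2\le2a^2+2b^2$ yields \eqref{est-l-III}.

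The \emph{main obstacle} is the overlap of the supports: every $g_x$ reaches the endpoint $n-1$, so the blocks $\{x,\dots,n-1\}$ are nested rather than disjoint, and Proposition \ref{pro--1-morm-est} cannot be applied to the whole sum $S_1$ in the multiscale way of Lemma \ref{lem-Ltheta-differ}. What makes the argument work anyway is that the variable block length $\ell_x=n-x$, together with the super-polynomial decay $|H'(\tfrac xn)|\lesssim\big(\tfrac{n-x}{n}\big)^\lambda$ of the test function near the endpoint, makes the otherwise very lossy triangle-inequality estimate $\|S_1\|_{-1,n}\le\sum_x\|g_x\|_{-1,n}$ sharp enough; each spurious power of $K$ it creates is then absorbed into a power of $n$ via $K\le n$. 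The analogous region $\textbf{IV}$ is handled in the same way.
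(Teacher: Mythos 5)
Your proposal is correct and follows essentially the same route as the paper: the same splitting $\bar\eta_x\bar\eta_{x+y}=\{\bar\eta_x\bar\eta_{x+y}-\psi_x^{\ell_x}\}+\psi_x^{\ell_x}$ with an $x$-dependent block reaching the right boundary, the Kipnis--Varadhan/$\|\cdot\|_{-1,n}$ estimate (Proposition \ref{pro--1-morm-est}) for the centered part, a static $L^2$ bound for the $\psi$-part, and the super-polynomial vanishing of $H'$ at $u=1$ to absorb the powers of $K$. The only difference is cosmetic: you combine the $x$-summands via the triangle inequality for the $\|\cdot\|_{-1,n}$ seminorm (and use $K\le n$ to trade powers of $K$ for powers of $n$), whereas the paper applies Cauchy--Schwarz in $x$, paying a factor $K$, and then the one-block estimate to each $F_x$; both yield the stated bound.
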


By Cauchy-Schwarz inequality, the expectation in the above lemma can be bounded by
\begin{align*}
	& 2 \E  \Big[ \sup_{0 \leq t \leq T} \Big(  \int_0^t n^{\theta - \frac32} \sum\limits_{x = n - K}^{n - 2} \sum\limits_{y = 1}^{n - 1 - x} y a_\gamma(y) \{\bar{\eta}_x \bar{\eta}_{x+y} - \psi_x^{K_x} (\eta(s)) \}
	H'(\tfrac{x}{n}) ds \Big)^2 \Big]\\
	&\quad  + 2 \E \Big[  \sup_{0 \leq t \leq T} \Big( \int_0^t n^{\theta - \frac32} \sum\limits_{x = n - K}^{n - 2} \sum\limits_{y = 1}^{n - 1 - x} y a_\gamma(y) \psi_x^{K_x} (\eta(s))
	H'(\tfrac{x}{n}) ds \Big)^2 \Big],
\end{align*}
where $K_x = n - 1 - x$. Thus, Lemma \ref{lem: III} follows directly from the following two lemmas.

\begin{lemma}\label{lem-III-1} For any $\lambda \in \mathbb{N}_+$, there exists some constant $C=C(\lambda,H)$ such that
	\[ \E \Big[ \sup_{0 \leq t \leq T} \Big(  \int_0^t n^{\theta - \frac32} \sum\limits_{x = n - K}^{n - 2} \sum\limits_{y = 1}^{n - 1 - x} y a_\gamma(y) \{\bar{\eta}_x \bar{\eta}_{x+y} - \psi_x^{K_x} (\eta(s)) \}
	H'(\tfrac{x}{n}) ds \Big)^2 \Big] \leq C T \frac{K^{\lambda + 1}}{n^{2 - 2\theta + \lambda}}. \]
\end{lemma}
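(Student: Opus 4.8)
The plan is to bound each of the $O(K)$ terms of the sum over $x$ separately, using the $\|\cdot\|_{-1,n}$-estimate of Proposition~\ref{pro--1-morm-est} with $m=1$, at the modest cost of a factor $K$ from Cauchy--Schwarz; the point is that $H\in\mathcal S$ vanishes together with all its derivatives at $1$, which is what makes the boundary region negligible. Write $h_x:=\sum_{y=1}^{n-1-x}y\,a_\gamma(y)\big(\bar\eta_x\bar\eta_{x+y}-\psi_x^{K_x}\big)$ with $K_x=n-1-x$, so that the integrand in the lemma equals $n^{\theta-3/2}\sum_{x=n-K}^{n-2}H'(\tfrac{x}{n})h_x$. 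Using $\sup_t(\sum_x a_x(t))^2\le K\sum_x\sup_t a_x(t)^2$,
\[
\E\Big[\sup_{0\le t\le T}\Big(\int_0^t n^{\theta-3/2}\sum_{x=n-K}^{n-2}H'(\tfrac{x}{n})h_x(\eta(s))\,ds\Big)^2\Big]\le K\sum_{x=n-K}^{n-2}\E\Big[\sup_{0\le t\le T}\Big(\int_0^t n^{\theta-3/2}H'(\tfrac{x}{n})h_x(\eta(s))\,ds\Big)^2\Big].
\]

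Fix $x$ with $K_x\ge2$. The function $n^{\theta-3/2}H'(\tfrac{x}{n})h_x$ is supported on the interval $\{x,\dots,n-1\}$ of length $K_x+1$, and from $\psi_x^\ell=\tfrac{\ell}{\ell-1}\big((\bar\eta_x^\ell)^2-\tfrac1{4\ell}\big)$ one computes $E_{\nu_\rho}[\psi_x^{K_x}]=(\rho-\tfrac12)^2=E_{\nu_\rho}[\bar\eta_x\bar\eta_{x+y}]$ for all $\rho\in[0,1]$ and all $y\ge1$, so $\bar\eta_x\bar\eta_{x+y}-\psi_x^{K_x}$, and hence $h_x$, has mean zero under every $\nu_\rho$ --- this covers the term $y=K_x$, for which $x+y=n-1$ lies outside the averaging block. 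Proposition~\ref{pro--1-morm-est} with $m=1$ then gives
\[
\E\Big[\sup_{0\le t\le T}\Big(\int_0^t n^{\theta-3/2}H'(\tfrac{x}{n})h_x(\eta(s))\,ds\Big)^2\Big]\le C\,T\,n^{2\theta-3}H'(\tfrac{x}{n})^2\,\frac{(K_x+1)^2}{n^2}\int h_x^2\,d\nu_{1/2}.
\]
Expanding the square, the cross terms between $\bar\eta_x\bar\eta_{x+y}$ and $\bar\eta_x\bar\eta_{x+y'}$ with $y\ne y'$ vanish by independence under $\nu_{1/2}$, the diagonal is $\tfrac1{16}\sum_y y^2a_\gamma(y)^2$, and the terms involving $\psi_x^{K_x}$ are controlled by \eqref{psi-E2-est}; since $a_\gamma(y)=\tfrac12 y^{-1-\gamma}$ and $\gamma>\theta\ge1$, both $\sum_y y^2a_\gamma(y)^2$ and $\sum_y y\,a_\gamma(y)$ converge, so $\int h_x^2\,d\nu_{1/2}\le C$ uniformly in $x$.

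It remains to exploit the boundary decay of the test function: since $H\in\mathcal S$, for the prescribed $\lambda$ there is $C=C(\lambda,H)$ with $|H'(u)|\le C(1-u)^\lambda$ on $[\tfrac12,1]$, hence $|H'(\tfrac{x}{n})|\le C\,(K_x/n)^\lambda$. Inserting this and the above bound into the first display and summing over $x$ with $K_x\ge 2$ (equivalently over $j=K_x\in\{2,\dots,K-1\}$),
\[
\E\Big[\sup_{0\le t\le T}\Big(\int_0^t n^{\theta-3/2}\sum_{x}H'(\tfrac{x}{n})h_x(\eta(s))\,ds\Big)^2\Big]\le C\,T\,K\,n^{2\theta-3}\sum_{j=2}^{K-1}\frac{j^{2\lambda+2}}{n^{2\lambda+2}}\le C\,T\,n^{2\theta-5-2\lambda}K^{2\lambda+4}\le C\,T\,\frac{K^{\lambda+1}}{n^{2-2\theta+\lambda}},
\]
the last inequality because $K\le n$ for $n$ large. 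The one index left out, $x=n-2$, is degenerate ($K_x=1$, $\psi_x^1\equiv0$, and $\bar\eta_{n-2}\bar\eta_{n-1}$ is not $\nu_\rho$-mean-zero); its single contribution I would bound directly, estimating the relevant $\|\cdot\|_{-1,n}$ norm through the reservoir flip at site $n-1$ (whose rate is at least $n^2/4$) and using $|H'(1-\tfrac2n)|\le C\,n^{-\lambda}$, so that it is negligible compared with the bound above.

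The genuinely essential point --- and the reason the boundary region is tractable at all --- is this last step: a crude bound treating $H'$ as $O(1)$ would only give $n^{2\theta-5}K^4$, which need not be negligible for the eventual choice of $K$ when $\theta$ is close to $3/2$, whereas the $\lambda$-fold vanishing of $H$ at $1$ supplies an arbitrarily large negative power of $n$. The only other slightly delicate checks are the $\nu_\rho$-mean-zero property for the $y=K_x$ term (where $x+y$ has left the averaging block) and the convergence of $\sum_y y^2a_\gamma(y)^2$ and $\sum_y y\,a_\gamma(y)$, which is exactly where Assumption~\ref{assump: parameters} enters. A full dyadic multi-scale analysis in the spirit of Lemma~\ref{lem-Ltheta-differ} would also work, but it is unnecessary here given the freedom in $\lambda$.
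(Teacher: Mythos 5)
Your proof is correct and follows essentially the same route as the paper: Cauchy--Schwarz over the $O(K)$ boundary sites to pull out a factor $K$, Proposition \ref{pro--1-morm-est} applied to each single block function $F_x$ (with $l_1\approx K_x$), a uniform bound on $\int F_x^2\,d\nu_{1/2}$ coming from $\gamma>1$, and the rapid vanishing of $H'$ at $u=1$ to gain the arbitrarily large power of $n^{-1}$. Your extra care at the degenerate site $x=n-2$ (where $K_x=1$ and the all-$\nu_\rho$ mean-zero hypothesis fails) is a detail the paper glosses over, and your direct treatment of it via the reservoir flip at $n-1$ is fine.
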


\begin{proof}
	
	Define 
	\[ F_x (\eta) = \sum\limits_{y = 1}^{n - 1 - x} y a_\gamma(y) \{ \bar{\eta}_x \bar{\eta}_{x + y} - \psi_x^{K_x}(\eta) \}. \]
	Then, the expectation can be bounded as
	\[ K n^{2\theta - 3} \sum\limits_{x = n - K}^{n - 2} H'(\tfrac{x}{n})^2 \E \Big[ \sup_{0 \leq t \leq T} \Big(  \int_0^t F_x( \eta(s) ) ds \Big)^2 \Big]. \]
	Using Proposition \ref{pro--1-morm-est} to $f = F_x$,  the last line is bounded by
	\begin{align*}
C T K n^{2\theta - 3} \sum\limits_{x = n - K}^{n - 2} H'(\tfrac{x}{n})^2 \frac{K_x^2}{n^2} \sum\limits_{y = 1}^{n - 1 - x} y^2 a_\gamma(y)^2.
	\end{align*}
	Since $\sum_{y = 1}^{n - 1 - x} y^2 a_\gamma(y)^2$ is uniformly bounded for $x = n - K, ..., n - 2$, the last line is bounded by
	\[ C T K n^{2\theta - 2} \int_{1 - \frac{K}{n}}^{1 - \frac{2}{n}} H'(u)^2 (1 - u)^2 du. \]
	Since $H \in \mathcal{S}$, for any $\frac{\lambda - 3}{2} \in \mathbb{N}_+$, $H'(u) \leq C(\lambda,H) (1 - u)^{\frac{\lambda - 3}{2}}$. So the last line can further be bounded by
	\[ C(\lambda,H) T K n^{2\theta - 2} \int_{1 - \frac{K}{n}}^{1 - \frac{2}{n}}
	(1 - u)^{\lambda - 1} du
	\leq  C(\lambda,H) T \frac{K^{\lambda + 1}}{n^{2 - 2\theta + \lambda}}, \]
thus concluding the proof.
\end{proof}

\begin{lemma}\label{lem-III-2}  For any $\lambda \in \mathbb{N}_+$, there exists some constant $C=C(\lambda,H)$ such that
	\[ \E \Big[ \sup_{0 \leq t \leq T} \Big(  \int_0^t n^{\theta - \frac32} \sum\limits_{x = n - K}^{n - 2} \sum\limits_{y = 1}^{n - 1 - x} y a_\gamma(y) \psi_x^{K_x} (\eta(s))
	H'(\tfrac{x}{n}) ds \Big)^2 \Big] \leq C T^2 \frac{K^{\lambda + 1}}{n^{4 - 2\theta + \lambda}}.
	\]
\end{lemma}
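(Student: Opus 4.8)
The plan is to follow the scheme used for Lemma \ref{lem-one-order-psi}, but the orthogonality of the $\psi$'s that was exploited there (via the block decomposition into the sets $\mathcal{H}_j$) is no longer available: here the cutoff scale $K_x=n-1-x$ depends on $x$, and $\psi_x^{K_x}$ is a function of $\eta_x,\eta_{x+1},\dots,\eta_{n-1}$, so all these functions have overlapping supports. I would therefore control their covariances directly by the Cauchy-Schwarz inequality, at the price of the extra boundary decay of $H'$, which is as strong as we like because $H\in\mathcal{S}$.

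First, since $\eta(s)$ is stationary under $\nu_{1/2}$, the Cauchy-Schwarz inequality in time bounds the quantity to be estimated in Lemma \ref{lem-III-2} by
\[
T^2 n^{2\theta-3}\, E_{\nu_{1/2}}\Big[\Big(\sum_{x=n-K}^{n-2}\Big(\sum_{y=1}^{n-1-x} y\,a_\gamma(y)\Big)\,\psi_x^{K_x}(\eta)\,H'(\tfrac{x}{n})\Big)^2\Big].
\]
Under Assumption \ref{assump: parameters} one has $\gamma>\theta\ge 1$ and $a_\gamma(y)=\tfrac12 y^{-1-\gamma}$ for $y>0$, so $\sum_{y=1}^{n-1-x} y\,a_\gamma(y)\le\tfrac12\sum_{y\ge1}y^{-\gamma}=:C_0<\infty$ uniformly in $x$ and $n$. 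Expanding the outer square, bounding each term by its absolute value, and using
\[
\big|E_{\nu_{1/2}}[\psi_x^{K_x}\psi_{x'}^{K_{x'}}]\big|\le \big(E_{\nu_{1/2}}[(\psi_x^{K_x})^2]\,E_{\nu_{1/2}}[(\psi_{x'}^{K_{x'}})^2]\big)^{1/2}
\]
together with \eqref{psi-E2-est} in the form $\int(\psi_x^{K_x})^2\,d\nu_{1/2}\le C/K_x^2$, I would bound the display above by
\[
C\,T^2 n^{2\theta-3}\Big(\sum_{x=n-K}^{n-2}\frac{|H'(x/n)|}{K_x}\Big)^2.
\]

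It then remains to estimate the last sum. Since $H\in\mathcal{S}$ vanishes together with all its derivatives at $u=1$, for any integer $m\ge(\lambda+1)/2$ we have $|H'(u)|\le C(\lambda,H)(1-u)^m\le C(\lambda,H)(1-u)^{(\lambda+1)/2}$ on $[0,1]$; evaluating at $u=x/n$ and using $n-x=K_x+1$ gives $|H'(x/n)|\le C(\lambda,H)\,n^{-(\lambda+1)/2}(K_x+1)^{(\lambda+1)/2}$. As $x$ runs over $\{n-K,\dots,n-2\}$, the scale $K_x=n-1-x$ runs over $\{1,\dots,K-1\}$, so
\[
\sum_{x=n-K}^{n-2}\frac{|H'(x/n)|}{K_x}\le C(\lambda,H)\,n^{-(\lambda+1)/2}\sum_{k=1}^{K-1}\frac{(k+1)^{(\lambda+1)/2}}{k}\le C(\lambda,H)\Big(\frac{K}{n}\Big)^{(\lambda+1)/2}.
\]
Squaring and substituting into the previous bound yields $C(\lambda,H)\,T^2 K^{\lambda+1}n^{2\theta-4-\lambda}=C(\lambda,H)\,T^2 K^{\lambda+1}/n^{4-2\theta+\lambda}$, as claimed.

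The one genuinely new ingredient compared with the interior estimate is the replacement of the orthogonality of the $\psi_x^{K_x}$'s by the Cauchy-Schwarz bound on their covariances, which turns a single sum of squares into the square of a single sum; the arbitrarily strong decay of $H'$ near the right endpoint compensates for this. I expect the only delicate point to be the bookkeeping of the exponent extracted from $H'$, so that the elementary sum over $k$ reproduces exactly the powers $K^{\lambda+1}$ and $n^{-(\lambda+1)}$ needed.
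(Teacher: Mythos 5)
Your proof is correct and follows essentially the same route as the paper: Cauchy--Schwarz in time, the second-moment bound $E_{\nu_{1/2}}[(\psi_x^{K_x})^2]\leq C/K_x^2$, and the arbitrarily fast vanishing of $H'$ at $u=1$ to absorb the boundary sum. The only (harmless) difference is bookkeeping: you bound the spatial square via pairwise Cauchy--Schwarz on the covariances, getting $\big(\sum_x |H'(x/n)|/K_x\big)^2$, whereas the paper uses $E[(\sum_x a_x)^2]\leq K\sum_x E[a_x^2]$; both yield the same power $CT^2K^{\lambda+1}/n^{4-2\theta+\lambda}$.
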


\begin{proof}
	Using the Cauchy-Schwarz inequality and \eqref{psi-E2-est}, the expectation can be bounded by
	\begin{align*}
		& T^2 n^{2\theta - 3} E_{\nu_{1/2}}  \Big[ \Big( \sum\limits_{x = n - K}^{n - 2} \sum\limits_{y = 1}^{n - 1 - x} y a_\gamma(y) \psi_x^{K_x}(\eta)  H'(\tfrac{x}{n}) \Big)^2 \Big]\\
		\leq & C T^2 K n^{2\theta - 3} \sum\limits_{x = n - K}^{n - 2} H'(\tfrac{x}{n})^2  E_{\nu_{1/2}} \Big[ \Big( \psi_x^{K_x}(\eta)  \Big)^2 \Big] \Big(\sum\limits_{y = 1}^{n - 1 - x} y a_\gamma(y) \Big)^2\\
		\leq & C T^2 K n^{2\theta - 3} \sum\limits_{x = n - K}^{n - 2} H'(\tfrac{x}{n})^2 \frac{1}{K_x^2}.
	\end{align*}
	Using the similar argument in the proof of last lemma, the last line is bounded by
	\[ C(\lambda,H) T^2 K n^{2\theta - 4} \int_{1 - \frac{K}{n}}^{1 - \frac{2}{n}} 
	(1 - u)^{\lambda - 1} du
	\leq C T^2 \frac{K^{\lambda + 1}}{n^{4 - 2\theta + \lambda}},\]
	which concludes the proof.
\end{proof}

Adding up the above estimates, we have shown that, if $K \gg n^{\tfrac{2\theta-2}{2\gamma-1}}$, then the expectation in Proposition \ref{pro: A4} is bounded by, for any $\lambda \in \N_+$,
\begin{equation}\label{bound sub}
C \Big\{ T^2 \Big(\frac{n^{2\theta-2}}{K} + \frac{K^{\lambda+1}}{n^{4-2\theta+\lambda}}\Big) + T \Big( \frac{K^{4-2\gamma}}{n^{4-2\theta}} + \frac{K}{n^{4-2\theta}} + \frac{K^{\lambda+1}}{n^{2-2\theta+\lambda}} \Big)\Big\}+o_n (1).
\end{equation}
Since $\theta < 3/2$ and $\gamma> \theta \geq 1$,
\[\frac{2\theta-2}{2\gamma-1} \leq 2\theta-2 < 1.\]
Take $K = n^{1-\delta}$ with $0 < \delta < 3 - 2 \theta$, and take $\lambda > (2-\delta-1)/\delta$.  Then, one can check that
\[2\theta-2 < 1 - \delta, \quad (1-\delta)(\lambda+1) < 2-2\theta+\lambda.\]
Thus, all the  terms in \eqref{bound sub} have order $o_n (1)$, which concludes the proof in the case $\theta < 3/2$.

\subsection{The case $\theta = \frac32$}\label{subsec: critical} In the last section, if we take $\delta$ and $\lambda$ satisfying 
\[1 > 1-\delta > \frac{1}{2\gamma-1}, \quad (1-\delta)(\lambda+1) < 2-2\theta+\lambda,\]
then we have shown that,  for $K =  n^{1 - \delta}$,
\begin{equation}\label{A4-3/2-equi}
	\lim_{ n \rightarrow \infty} \E \Big[ \sup_{0 \leq t \leq T} \Big(\int_0^t \big\{A_s^{n,4} (H) - 2 \sum_{x=1}^{n-K-1}
	\sum\limits_{y = 1}^{K} y a_\gamma(y) \psi_x^{K}( \eta(s)) H'(\tfrac{x}{n}) \big\}ds\Big)^2\Big] = 0.
\end{equation}
The factor $2$ comes from the symmetry between the two regions $\mathbf{I}$ and $\mathbf{II}$.   Note that
\[2 \sum_{y=1}^\infty  y a_\gamma (y) = \sum_{y \in \Z} y p_\gamma (y) = m.\]
Thus,
\begin{equation}\label{eqn 5}
\begin{aligned}
	&m \sum_{x=1}^{n-1}     \psi_x^{\varepsilon n}H'(\tfrac{x}{n}) - 	2 \sum_{x=1}^{n-K-1}
	\sum\limits_{y = 1}^{K} y a_\gamma(y) \psi_x^{K}H'(\tfrac{x}{n}) \\
	&=  m \sum_{x=1}^{n-\varepsilon n} ( \psi_x^{\varepsilon n} - \psi_x^K) H'(\tfrac{x}{n}) + m \sum_{x=n-\varepsilon n+1}^{n-1} \psi_x^{\varepsilon n} H'(\tfrac{x}{n}) - m \sum_{x= n-\varepsilon n+1}^{n-K-1} \psi_x^{K} H'(\tfrac{x}{n}) \\
	&\quad + 2 \sum\limits_{y = K}^{\infty} y a_\gamma(y) \sum_{x=1}^{n-K-1}  \psi_x^{K}H'(\tfrac{x}{n}) 
\end{aligned}
\end{equation}
Note that for $x=n-\varepsilon n, \ldots,n-1$,
\[\psi_x^{\varepsilon n} = E_{\nu_{1/2}} \Big[\bar{\eta}_x \bar{\eta}_{x-1} \Big| \sum^{y=x}_{x-\varepsilon n+1} \eta_y \Big] =: \psi_{x,L}^{\varepsilon n} .\]
Similarly, we denote the average over the right box by $\psi_{x,R}^{\varepsilon n}$. Then, the second and the third terms on \eqref{eqn 5} equals
\begin{equation}\label{eqn 6}
	\begin{aligned}
			&m \sum_{x=n-\varepsilon n+1}^{n-1} \psi_{x,L}^{\varepsilon n} H'(\tfrac{x}{n}) - m \sum_{x= n-\varepsilon n+1}^{n-K-1} \psi_{x,R}^{K} H'(\tfrac{x}{n}) 
			= m \sum_{x=n-\varepsilon n+1}^{n-K-1} \big(\psi_{x,L}^{\varepsilon n} - \psi_{x,L}^{K}\big) H'(\tfrac{x}{n})  \\
			&\quad + m \sum_{x=n-K}^{n-1} \psi_{x,L}^{\varepsilon n} H'(\tfrac{x}{n}) - m \sum_{x= n-\varepsilon n+1}^{n-K-1} \big(\psi_{x,R}^{K} - \psi_{x,L}^{K}\big) H'(\tfrac{x}{n}) 
	\end{aligned}
\end{equation}
The first term on the right-hand side of \eqref{eqn 6} can be dealt with in the same way as in Lemma \ref{lem 1} below.  For the second term, 
\begin{align*}
	 E_{\nu_{1/2}} \Big[\Big( \sum\limits_{x = n -K}^{n - 1} \psi_{x,L}^{\varepsilon n}(\eta) H'(\tfrac{x}{n})\Big)^2\Big]
	&\leq K \sum\limits_{x = n -K}^{n - 1}
	E_{\nu_{1/2}} [\psi_{x,L}^{\varepsilon n}(\eta) ^2 ] H'(\tfrac{x}{n})^2\\
	&\leq  C K \sum\limits_{x = n - K}^{n - 1} \frac{1}{(\varepsilon n)^2} H'(\tfrac{x}{n})^2  \leq \frac{C (H) }{\varepsilon^2 n^{\delta}}.
\end{align*}
For the last term on the right-hand side of \eqref{eqn 6}, by Cauchy-Schwarz inequality and Proposition \ref{pro--1-morm-est}, 
\begin{align*}
	&\E \Big[\sup_{0 \leq t \leq T} \Big(\int_0^t \sum_{x= n-\varepsilon n+1}^{n-K-1} \big(\psi_{x,R}^{K} - \psi_{x,L}^{K}\big) H'(\tfrac{x}{n})  ds\Big)^2\Big] \\ 
	&\leq CT \varepsilon n \sum_{x= n-\varepsilon n+1}^{n-K-1} \frac{K^2}{n^2} E_{\nu_{1/2}} \Big[\Big(\psi_{x,R}^{K} - \psi_{x,L}^{K}\Big)^2\Big] H^\prime (\tfrac{x}{n})^2 \\
	&\leq C(H) T \varepsilon^2. 
\end{align*}
For the last term on the right-hand side of \eqref{eqn 5}, by Lemma \ref{lem-one-order-psi}, 
\begin{align*}
	\E \Big[\sup_{0 \leq t \leq T} \Big(\int_0^t \sum\limits_{y = K}^{\infty} y a_\gamma(y) \sum_{x=1}^{n-K-1}  \psi_x^{K}H'(\tfrac{x}{n})  ds\Big)^2\Big] \leq \frac{CT^2n}{K^{2\gamma-1}},
\end{align*} 
which converges to zero as $n \rightarrow \infty$ since $(1-\delta) (2\gamma-1)  > 1$.

It remains to bound the first term on the right-hand side of \eqref{eqn 5}.

\begin{lemma}\label{lem 1} There exists some constant $C(H) = C \|\nabla H\|_{L^2([0,1])}^2$ for some $C>0$ such that
\[ \mathbb{E} \Big[\sup\limits_{0 \leq t \leq T}  \Big( \int_0^t \sum\limits_{x = 1}^{n - \varepsilon n} (\psi_x^{K}(\eta (s)) - \psi_x^{\epsilon n}(\eta (s))) H'(\tfrac{x}{n}) ds \Big)^2 \Big] \leq C(H) T \varepsilon. \]
\end{lemma}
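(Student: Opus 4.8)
The plan is to interpolate between the two box sizes $K$ and $\varepsilon n$ by a dyadic multiscale scheme, exactly as in the proof of Lemma~\ref{lem-Ltheta-differ}. Assume without loss of generality that $\varepsilon n/K = 2^M$, set $K_i = 2^i K$ for $i=0,\dots,M$, so that $K_0 = K$ and $K_M = \varepsilon n$, and telescope
\[
\psi_x^K - \psi_x^{\varepsilon n} = \sum_{i=0}^{M-1}\big(\psi_x^{K_i}-\psi_x^{K_{i+1}}\big).
\]
Applying Minkowski's inequality in $L^2(\P)$, the lemma reduces to the per-scale estimate
\[
E_i := \E\Big[\sup_{0\le t\le T}\Big(\int_0^t\sum_{x=1}^{n-\varepsilon n}\big(\psi_x^{K_i}-\psi_x^{K_{i+1}}\big)(\eta(s))\,H'(\tfrac xn)\,ds\Big)^2\Big]\le C\,T\,\|\nabla H\|_{L^2([0,1])}^2\,\frac{K_i}{n},
\]
because then $\sqrt{\mathrm{LHS}}\le\sum_{i=0}^{M-1}\sqrt{E_i}\le C\sqrt{T}\,\|\nabla H\|_{L^2}\sqrt{K/n}\sum_{i=0}^{M-1}2^{i/2}\le C\sqrt{T}\,\|\nabla H\|_{L^2}\sqrt{K/n}\cdot\sqrt{\varepsilon n/K}=C\sqrt{T\varepsilon}\,\|\nabla H\|_{L^2}$, which squares to the claimed bound with $C(H)=C\|\nabla H\|_{L^2([0,1])}^2$.

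For the per-scale estimate I would first observe that $\psi_x^\ell$ is admissible for Proposition~\ref{pro--1-morm-est}: from the explicit formula $\psi_x^\ell=\frac{\ell}{\ell-1}\{(\bar\eta_x^\ell)^2-\frac1{4\ell}\}$ one computes $E_{\nu_\rho}[\psi_x^\ell]=(\rho-\tfrac12)^2$ for every $\rho\in[0,1]$, \emph{independently of $\ell$}; hence $\psi_x^{K_i}-\psi_x^{K_{i+1}}$ has zero mean under every $\nu_\rho$, and it is a local function supported on an interval of length $K_{i+1}$. One then decomposes $\sum_{x=1}^{n-\varepsilon n}$ into the $K_{i+1}$ arithmetic progressions of step $K_{i+1}$ (the analogues of the families $\mathcal H_j^{i+1}$ in \eqref{def-H-ij}), so that within each class the supports of the functions $\psi_x^{K_i}-\psi_x^{K_{i+1}}$ are pairwise disjoint; a Cauchy--Schwarz step over the $K_{i+1}$ classes costs a factor $K_{i+1}$, and Proposition~\ref{pro--1-morm-est} applied class by class yields
\[
E_i\le C\,T\,\frac{K_{i+1}^3}{n^2}\sum_{x=1}^{n-\varepsilon n}H'(\tfrac xn)^2\,E_{\nu_{1/2}}\big[(\psi_x^{K_i}-\psi_x^{K_{i+1}})^2\big].
\]
Using \eqref{psi-E2-est} together with $K_i\le K_{i+1}$ to bound $E_{\nu_{1/2}}[(\psi_x^{K_i}-\psi_x^{K_{i+1}})^2]\le C/K_i^2$, the Riemann-sum bound $\tfrac1n\sum_xH'(\tfrac xn)^2\le C\|\nabla H\|_{L^2([0,1])}^2$ (valid for $n$ large since $H\in\mathcal S$), and $K_{i+1}=2K_i$, one arrives at $E_i\le C\,T\,\|\nabla H\|_{L^2([0,1])}^2\,K_i/n$, as needed.

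The part requiring care is twofold. First, the \emph{cancellation of the $\ell$-dependence in $E_{\nu_\rho}[\psi_x^\ell]$}: it is this identity that makes each difference $\psi_x^{K_i}-\psi_x^{K_{i+1}}$ centered with respect to the whole one-parameter family $\{\nu_\rho\}_{\rho\in[0,1]}$, which is precisely the hypothesis of Proposition~\ref{pro--1-morm-est} controlling the $\|\cdot\|_{-1,n}$ norm; losing it would break the argument. Second, the \emph{accounting of the dyadic sum}: since $E_i$ grows geometrically in $K_i$, the series $\sum_i\sqrt{E_i}$ is dominated by its last term $\sqrt{E_{M-1}}$, which is of order $\sqrt{T\varepsilon}\,\|\nabla H\|_{L^2}$, and this is exactly why the final bound is $O(\sqrt\varepsilon)$ with no logarithmic loss. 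A purely technical point, handled in the same way as the boundary-adjacent terms in \eqref{eqn 5}--\eqref{eqn 6}, is the off-by-one at $x$ close to $n-\varepsilon n$ (where the support of $\psi_x^{K_{i+1}}$ just reaches the site $n-1$), which contributes a negligible $O(1/n)$; apart from this, no idea beyond the multiscale scheme of \cite{gonccalves2018density} and the estimates already established for the region $\mathbf I$ is required.
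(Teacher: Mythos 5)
Your proposal is correct and follows essentially the same route as the paper: telescope over dyadic scales $K_i = 2^iK$ up to $\varepsilon n$, apply Minkowski, and bound each scale by $C(H)TK_i/n$ via Proposition \ref{pro--1-morm-est} (disjoint-support blocks of step $K_{i+1}$) together with \eqref{psi-E2-est}, so the geometric sum closes at $C(H)T\varepsilon$. Your explicit verification that $E_{\nu_\rho}[\psi_x^\ell]=(\rho-\tfrac12)^2$ is independent of $\ell$ is a point the paper leaves implicit, but it is exactly the hypothesis needed and does not change the argument.
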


\begin{proof}
	
	Using Proposition \ref{pro--1-morm-est} and \eqref{psi-E2-est}, for $M \in \mathbb{N}_+$,
	\begin{align*}
		& \mathbb{E}  \Big[\sup\limits_{0 \leq t \leq T}  \Big(  \int_0^t \sum\limits_{x = 1}^{n - \varepsilon n} (\psi_x^M(\eta (s)) - \psi_x^{2M}(\eta (s))) H'(\tfrac{x}{n}) ds \Big)^2\Big]\\
		= & \mathbb{E} \Big[ \sup\limits_{0 \leq t \leq T} \Big(  \int_0^t \sum\limits_{j = 1}^{2M} \sum\limits_{x \in \mathcal{H}_j} (\psi_x^M(\eta (s)) - \psi_x^{2M}(\eta (s))) H'(\tfrac{x}{n}) ds \Big)^2\Big]\\
		\leq & C T M \sum\limits_{j = 1}^{2M}\frac{M^2}{n^2} \sum\limits_{x \in \mathcal{H}_j} E_{\nu_{1/2}}\Big[ \Big( (\psi_x^M(\eta (s)) - \psi_x^{2M}(\eta (s))) \Big)^2\Big] H'(\tfrac{x}{n})^2 \\
		\leq & C T \frac{M^3}{n^2} \sum\limits_{x=1}^{n-\varepsilon n} \frac{1}{M^2} H'(\tfrac{x}{n})^2 \leq  C(H) T \frac{M}{n},
	\end{align*}
	where $\mathcal{H}_j$ is the same with \eqref{def-H-j} except that $K$ is replaced with $2M$.
	
Without loss of generality, let us assume $\varepsilon n = 2^l K$ for some integer $l$. Set $K_0 = K$ and $K_i = 2^i K$ for $i \in \mathbb{N}$. Then, using  Minkowski's inequality and the estimate above, we have
	\begin{align*}
		& \mathbb{E} \Big[\sup\limits_{0 \leq t \leq T} \Big(  \int_0^t \sum\limits_{x=1}^{n-\varepsilon n} (\psi_x^K (\eta (s)) - \psi_x^{\varepsilon n} (\eta (s))) H'(\tfrac{x}{n}) ds \Big)^2 \Big]\\
		= & \mathbb{E} \Big[\sup\limits_{0 \leq t \leq T} \Big(  \int_0^t \sum\limits_{x=1}^{n-\varepsilon n} \sum\limits_{i = 0}^{l - 1} (\psi_x^{K_i}(\eta (s)) - \psi_x^{K_{i+1}}(\eta (s))) H'(\tfrac{x}{n}) ds \Big)^2 \Big]\\
		\leq & \left( \sum\limits_{i = 0}^{l - 1} \Big( \mathbb{E} \Big[\sup\limits_{0 \leq t \leq T} \Big(  \int_0^t \sum\limits_{x=1}^{n-\varepsilon n} (\psi_x^{K_i}(\eta (s)) - \psi_x^{K_{i+1}}(\eta (s))) H'(\tfrac{x}{n}) ds \Big)^2 \Big] \Big)^{1/2} \right)^2\\
		\leq & \bigg( \sum\limits_{i = 0}^{l - 1} \sqrt{C(H) T \frac{K_i}{n}} \bigg)^2 \leq C(H) T \varepsilon.
	\end{align*}
This  ends the proof.
\end{proof}

\section{Tightness}\label{sec: tightness}

In this section, we first prove that the sequence $\{\mathcal{Y}_t^n, t \in [0,T]\}_{n \geq 1}$ is tight in  the space $\mathcal{D}([0, T]; \mathcal{S}')$ equipped with the $J_1$-{Skorokhod} topology,  then we prove that any limiting point concentrates on the continuous trajectory.

We first recall Mitoma's criterion \cite{mitoma1983tightness}.

\begin{proposition}[Mitoma's criterion]\label{pro-Mitoma}
A sequence $\{ \mathcal{Y}_t^n, t\in[0,T]\}_{n\in\mathbb{N}}$ of stochastic processes in $\mathcal{D}([0,T]; \mathcal{S}')$ is tight with respect to the $J_1$-{Skorokhod} topology if and only if the sequence of real-valued processes $\{\mathcal{Y}_t^n(H), t\in[0,T]\}_{n\in\mathbb{N}}$ is tight with respect to the $J_1$-{Skorokhod} topology of $\mathcal{D}([0,T]; \mathbb{R})$ for any $H \in\mathcal{S}$.
\end{proposition}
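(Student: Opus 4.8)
The plan is to derive the criterion from the nuclear Fréchet structure of $\mathcal{S}$ together with Jakubowski's tightness criterion for Skorokhod spaces. The ``only if'' direction is immediate: for each fixed $H\in\mathcal{S}$ the map $w\mapsto\big(t\mapsto w_t(H)\big)$ is continuous from $\mathcal{D}([0,T];\mathcal{S}')$ to $\mathcal{D}([0,T];\mathbb{R})$, so if $\{\mathcal{Y}^n\}$ is tight then so is each $\{\mathcal{Y}^n_t(H),\,t\in[0,T]\}$, being a continuous image of a tight family.

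For the converse, I would first fix a Hilbert scale for $\mathcal{S}$. A function in $\mathcal{C}^\infty([0,1])$ vanishing to infinite order at $0$ and $1$ is precisely one that is a limit, in every Sobolev norm, of functions in $\mathcal{C}_c^\infty(0,1)$; hence $\mathcal{S}=\bigcap_{q\ge0}\mathcal{H}_q$ with matching topology, where $\mathcal{H}_q=H^q_0([0,1])$ is a separable Hilbert space, $\mathcal{S}'=\bigcup_q\mathcal{H}_{-q}$, and in one dimension the inclusions $\mathcal{H}_{q+1}\hookrightarrow\mathcal{H}_q$ (hence $\mathcal{H}_{-q}\hookrightarrow\mathcal{H}_{-q-1}$) are Hilbert--Schmidt. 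In particular every ball $\{\|w\|_{-q}\le R\}$ is compact in $\mathcal{H}_{-q-1}$, and, $\mathcal{S}$ being Montel, also in $\mathcal{S}'$. I would then apply Jakubowski's criterion for $\mathcal{S}'$ with the point-separating, addition-closed family of continuous functions $\{w\mapsto w(H):H\in\mathcal{S}\}$: tightness of $\{\mathcal{Y}^n_t,t\in[0,T]\}$ in $\mathcal{D}([0,T];\mathcal{S}')$ follows once we check, first, that for every $H\in\mathcal{S}$ the real process $\{\mathcal{Y}^n_t(H),t\in[0,T]\}$ is tight in $\mathcal{D}([0,T];\mathbb{R})$ (this is exactly the hypothesis), and second, the compact containment condition: for every $\eta>0$ there are $q\in\mathbb{N}$ and $R<\infty$ with $\inf_n\mathbb{P}\big(\sup_{0\le t\le T}\|\mathcal{Y}^n_t\|_{-q}\le R\big)\ge1-\eta$. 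Thus everything reduces to this compact containment.

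To obtain it, I would argue as follows. Since a relatively compact subset of $\mathcal{D}([0,T];\mathbb{R})$ is uniformly bounded, the hypothesis already gives that $\sup_{0\le t\le T}|\mathcal{Y}^n_t(H)|$ is tight in $n$ for each fixed $H$; consequently, for every $\delta>0$ the functional $p_\delta(H):=\sup_n\inf\{M>0:\mathbb{P}(\sup_{t\le T}|\mathcal{Y}^n_t(H)|>M)\le\delta\}$ is finite on all of $\mathcal{S}$, positively homogeneous, sub-additive up to a universal constant, and lower semicontinuous. Its sublevel sets are therefore closed and absorbing, so by the Baire category theorem one of them has nonempty interior, which forces $p_\delta$ to be dominated by a continuous, hence a Hilbertian (in the scale), seminorm. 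Feeding this back: choose by nuclearity an index $q$ and an orthonormal basis $(\chi_j)$ of $\mathcal{H}_q$ such that $p_{\eta 2^{-j}}(\chi_j)$ is summable against $1$; then write $\|\mathcal{Y}^n_t\|_{-q}^2=\sum_j|\mathcal{Y}^n_t(\chi_j)|^2$ and use a union bound over $j$ to conclude that $\sup_n\mathbb{P}(\sup_{t\le T}\|\mathcal{Y}^n_t\|_{-q}>R)<\eta$ for $R$ large.

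The step I expect to be the main obstacle is precisely this last one, for two reasons. First, the Baire-category passage must be made rigorous: the lower semicontinuity and the approximate sub-additivity of the quantile functional $p_\delta$ require care with continuity points of distribution functions and with universal constants. Second, and more seriously, the bookkeeping in the union bound must be arranged so that the Hilbert--Schmidt weights of the scale dominate the $j$-dependent tails of $\mathcal{Y}^n_t(\chi_j)$ uniformly in $n$, even though the seminorm produced by the Baire argument is allowed to depend on $j$; closing this summation is exactly the point where nuclearity of $\mathcal{S}$, rather than mere metrizability, is indispensable, and it is why the criterion fails for a general Fréchet space of test functions. An alternative route replaces Jakubowski's criterion by the classical two-step scheme, namely tightness of the one-time marginals $\{\mathcal{Y}^n_t\}$ in $\mathcal{S}'$ via the Mitoma--Fernique lemma, plus an Aldous/Kurtz oscillation estimate for each $\{\mathcal{Y}^n_t(H)\}$; but it runs into the same core estimate.
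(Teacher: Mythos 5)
The paper itself gives no proof of this proposition: it is recalled verbatim from Mitoma \cite{mitoma1983tightness} and used as a black box, and only the sufficiency direction is ever applied. So your proposal has to be judged against the literature proof. Your ``only if'' direction is correct, and the architecture of your converse — identify $\mathcal{S}$ with the intersection of a Hilbert scale $\mathcal{H}_q$ with Hilbert--Schmidt inclusions, reduce tightness in $\mathcal{D}([0,T];\mathcal{S}')$ to per-test-function tightness plus compact containment (Jakubowski), and extract an equicontinuity statement from the hypothesis by a Baire category argument — is indeed the skeleton of Mitoma's argument.

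The genuine gap is the step you yourself flag as the main obstacle, and it is not closed by what you propose. The Baire step produces, for each \emph{fixed} level $\delta$, one continuous seminorm dominating the quantile functional $p_\delta$; once you let the level depend on $j$ (your $\eta 2^{-j}$), the dominating seminorm may deteriorate with $j$, and nuclearity only gives $\sum_j \|\chi_j\|_p^2 < \infty$ for a fixed index $p$ of the scale — it gives no summability of $p_{\eta 2^{-j}}(\chi_j)^2$, so no choice of $q$ and orthonormal basis with the property you require is shown to exist, and the naive union bound does not close. The standard repair uses only a single-level statement plus positive homogeneity: Baire yields $\epsilon,\delta>0$ and an index $p$ with $\sup_n \P\big(\sup_{t\le T}|\mathcal{Y}^n_t(H)|>\epsilon\big)\le\epsilon$ whenever $\|H\|_p\le\delta$, hence by scaling $\sup_n\P\big(\sup_{t\le T}|\mathcal{Y}^n_t(H)|>\epsilon\|H\|_p/\delta\big)\le\epsilon$ for all $H$; one then passes to $\|\cdot\|_{-q}$, for $\mathcal{H}_q\hookrightarrow\mathcal{H}_p$ Hilbert--Schmidt, not by a union bound but by a randomization or truncation device — for instance writing $\|\mathcal{Y}^n_t\|_{-q}$ as a constant times the Gaussian average $\E_g\big|\mathcal{Y}^n_t\big(\sum_j g_j\chi_j\big)\big|$, where $\sum_j g_j\chi_j$ converges a.s.\ in $\mathcal{H}_p$ precisely because the embedding is Hilbert--Schmidt, or working with $\E\big[1\wedge\sup_{t\le T}|\mathcal{Y}^n_t(\cdot)|^2\big]$ — which converts the single-$\epsilon$ ball estimate into the desired uniform bound on $\sup_{t\le T}\|\mathcal{Y}^n_t\|_{-q}$. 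Two smaller points also need attention: quantiles are sub-additive only in the split form $p_{\delta+\delta'}(H+G)\le p_\delta(H)+p_{\delta'}(G)$, so the Baire argument is cleaner when run directly on the closed sets $\{H:\sup_n\P(\sup_{t\le T}|\mathcal{Y}^n_t(H)|>\epsilon)\le\epsilon\}$; and the continuity in $H$ needed for their closedness should be justified by noting that a c\`adl\`ag $\mathcal{S}'$-valued path has relatively compact, hence equicontinuous, range. With these repairs your route becomes essentially the standard proof; for the purposes of this paper, citing \cite{mitoma1983tightness} suffices.
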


By \eqref{martingale}, \eqref{sep-Ln} and  Proposition \ref{pro-Mitoma}, we only need to prove the tightness of the following processes
\[ \left\{ \mathcal{Y}_0^n(H) \right\}_{n \in \mathbb{N}}, \quad \left\{ \mathcal{M}_t^n(H), t \in [0,T] \right\}_{n \in \mathbb{N}}, \quad \left\{ \int_0^t A_s^{n,j}(H)ds, t \in [0,T] \right\}_{n \in \mathbb{N}},\; j= 1,...,5 \]
for any $H \in \mathcal{S}$, respectively.

\subsection{Convergence of the initial fluctuation field}
In this subsection, we show that the sequence $\{\mathcal{Y}_0^n(H)\}_{n \in \mathbb{N}}$ converges in distribution to a Gaussian random variable $\mathcal{Y}_0(H)$ of mean zero and variance $\frac14 \| H \|_{L^2([0,1])}^2$. In particular, $\{\mathcal{Y}_0^n(H)\}_{n \in \mathbb{N}}$ is tight.

Indeed, by Taylor's expansion, for any $\lambda \in \R$,
\begin{align*}
\lim_{n \rightarrow \infty} \log \mathbb{E} \Big[\exp\{ i \lambda \mathcal{Y}_0^n(H) \} \Big] & =\lim_{n \rightarrow \infty} \log \int \exp \left\{ \frac{i \lambda}{\sqrt{n}} \sum\limits_{x \in \Lambda_n} \left(\eta_x - \tfrac12 \right) H(\tfrac{x}{n}) \right\} d\nu_{1/2}\\
& = \lim_{n \rightarrow \infty} \sum\limits_{x \in \Lambda_n} \log \int \exp \left\{ \frac{i \lambda}{\sqrt{n}} \left(\eta_x - \tfrac12 \right) H(\tfrac{x}{n}) \right\} d\nu_{1/2}\\
& = \lim_{n \rightarrow \infty} \sum\limits_{x \in \Lambda_n} \log \Big( 1 - \frac{\lambda^2}{8n} H(\tfrac{x}{n})^2 + \mathcal{O}_H (n^{-3/2})  \Big)\\
&= -\frac{\lambda^2}{8}\| H \|_{L^2([0,1])}^2.
\end{align*}
This ends the proof.

\subsection{Convergence of the martingale} In this section, we prove Proposition \ref{prop: martingale convergence}, from which the tightness of the martingale follows directly. By \cite{whitt2007proofs}, we only need to check the following conditions:
\begin{itemize}
	\item[{\rm (i)}] for any $n > 1$, the quadratic variation process $\langle \mathcal{M}^n(H) \rangle_t$ has continous trajectories, a.s.;
	
	\item[{\rm (ii)}] the following limit holds
	\[ \lim\limits_{N \to \infty} \mathbb{E} \Big[ \sup\limits_{0 \leq s \leq T} |\mathcal{M}_s^n(H) - \mathcal{M}_{s-}^n(H)|\Big] = 0; \]
	
	\item[{\rm (iii)}] for any $t \in [0,T]$, the sequence $\{\langle \mathcal{M}^n(H) \rangle_t\}_{n \geq 1}$ converges to $\frac{t C_\alpha}{2} \| \nabla H \|_{L^2{[0,1]}}^2$ in probability.
\end{itemize}

By the calculations in Subsection \ref{subsec: calculation}, the quadratic variation of $\mathcal{M}_t^n(H)$ is given by
\begin{align}
	&\langle \mathcal{M}^n(H) \rangle_t = \int_0^t \sum_{x,y \in \Lambda_n} [n s_\alpha (x-y) + n^{\theta-1} p_\gamma (y-x) \eta_x (1-\eta_y)] (\eta_x - \eta_y)^2 \big[ H(\tfrac{y}{n}) - H(\tfrac{x}{n}) \big]^2 ds\notag\\
	&+ \int_0^t \sum_{x \in \Lambda_n} \Big[  \frac{n}{2} r_\alpha (x) + \frac{n^{\theta-1}}{2} \sum_{y \leq 0} p_\gamma (x-y)(1-\eta_x) + \frac{n^{\theta-1}}{2} \sum_{y \geq n} p_\gamma (y-x)\eta_x \Big]  H(\tfrac{x}{n})^2 ds\label{quad-var}. 
\end{align}
Then condition (i)  immediately holds.

By \eqref{martingale}, 
\[ \sup\limits_{0 \leq t \leq T} |\mathcal{M}_t^n(H) - \mathcal{M}_{t-}^n(H)| = \sup\limits_{0 \leq t \leq T} |\mathcal{Y}_t^n(H) - \mathcal{Y}_{t-}^n(H)| \leq \frac{C(H)}{\sqrt{n}}. \]
So condition (ii)  holds.

It remains to check condition (iii). We first prove the convergence of the expectation for $\langle \mathcal{M}^n(H) \rangle_t$.

\begin{lemma}
For any $H \in \mathcal{S}$,
\[ \lim\limits_{n \to \infty} \mathbb{E} [\langle \mathcal{M}^n(H) \rangle_t]= t \frac{C_\alpha}{2} \| \nabla H \|_{L^2([0,1])}^2  \]
with $C_\alpha = \sum_{y \in \mathbb{Z} \setminus \{0\}}|y|^{1 - \alpha}$.
\end{lemma}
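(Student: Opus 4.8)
\emph{Proof sketch.} The plan is to take the expectation in the explicit formula \eqref{quad-var} for $\langle \mathcal M^n(H)\rangle_t$, using that the process is stationary so $\eta(s)$ has law $\nu_{1/2}$ for every $s$, and then to analyze the four resulting deterministic sums. Under $\nu_{1/2}$ one has, for $x\neq y$, $E_{\nu_{1/2}}[(\eta_x-\eta_y)^2]=\tfrac12$ and $E_{\nu_{1/2}}[\eta_x(1-\eta_y)(\eta_x-\eta_y)^2]=E_{\nu_{1/2}}[\eta_x(1-\eta_y)]=\tfrac14$, while $E_{\nu_{1/2}}[\eta_x]=E_{\nu_{1/2}}[1-\eta_x]=\tfrac12$ (the diagonal $x=y$ contributes nothing since $s_\alpha(0)=p_\gamma(0)=0$). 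Hence
\begin{align*}
\mathbb{E}\big[\langle \mathcal M^n(H)\rangle_t\big]=t\Big[\;&\tfrac n2\sum_{x,y\in\Lambda_n}s_\alpha(x-y)\big(H(\tfrac yn)-H(\tfrac xn)\big)^2+\tfrac{n^{\theta-1}}4\sum_{x,y\in\Lambda_n}p_\gamma(y-x)\big(H(\tfrac yn)-H(\tfrac xn)\big)^2\\
&+\tfrac n2\sum_{x\in\Lambda_n}r_\alpha(x)H(\tfrac xn)^2+\tfrac{n^{\theta-1}}4\sum_{x\in\Lambda_n}r_\gamma(x)H(\tfrac xn)^2\;\Big],
\end{align*}
and it suffices to show the first sum converges to $\tfrac{C_\alpha}{2}\|\nabla H\|_{L^2([0,1])}^2$ while the other three vanish.

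For the main (first) sum I would substitute $z=y-x$ and Taylor expand $H(\tfrac{x+z}{n})-H(\tfrac xn)=H'(\tfrac xn)\tfrac zn+\tfrac12 H''(\xi_{x,z})\tfrac{z^2}{n^2}$ with $\xi_{x,z}\in[0,1]$ (legitimate since $x,x+z\in\Lambda_n$). The leading contribution is $\tfrac1{2n}\sum_{x\in\Lambda_n}H'(\tfrac xn)^2\sum_{z}|z|^{1-\alpha}\mathbf 1_{\{x+z\in\Lambda_n\}}$. Writing $\sum_{z}|z|^{1-\alpha}\mathbf 1_{\{x+z\in\Lambda_n\}}=C_\alpha-\sum_{y\notin\Lambda_n}|y-x|^{1-\alpha}$ and using $\sum_{y\notin\Lambda_n}|y-x|^{1-\alpha}\le C(\alpha)\big(x^{2-\alpha}+(n-x)^{2-\alpha}\big)$ (finite because $\alpha>2$), the $z$-truncation error is at most $\tfrac{C(H)}{n}\sum_{x=1}^{n-1}\big(x^{2-\alpha}+(n-x)^{2-\alpha}\big)$, which is $o(1)$ since $\sum_{x=1}^{n-1}x^{2-\alpha}=o(n)$ for $\alpha>2$; what remains is $\tfrac{C_\alpha}{2}\cdot\tfrac1n\sum_{x}H'(\tfrac xn)^2$, a Riemann sum converging to $\tfrac{C_\alpha}{2}\|\nabla H\|_{L^2([0,1])}^2$. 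The Taylor remainders are controlled by $C(H)n^{-2}\sum_{x}\sum_{1\le|z|<n}|z|^{2-\alpha}$ (cross term) and $C(H)n^{-3}\sum_{x}\sum_{1\le|z|<n}|z|^{3-\alpha}$ (square term), and since $\sum_{1\le|z|<n}|z|^{2-\alpha}=o(n)$ and $\sum_{1\le|z|<n}|z|^{3-\alpha}=o(n^2)$ when $\alpha>2$, both tend to $0$.

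For the three error sums: the asymmetric bulk term is bounded, via $(H(\tfrac yn)-H(\tfrac xn))^2\le C(H)(y-x)^2/n^2$, by $C(H)n^{\theta-3}\sum_{x\in\Lambda_n}\sum_{z=1}^{n}z^{1-\gamma}$, which is of order $n^{\theta-\gamma}$, $n^{\theta-2}\log n$ or $n^{\theta-2}$ according as $\gamma<2$, $\gamma=2$ or $\gamma>2$, hence vanishes by Assumption \ref{assump: parameters}. For the two reservoir sums I would use $r_\alpha(x)\le C(\alpha)(x^{-\alpha}+(n-x)^{-\alpha})$, $r_\gamma(x)\le C(\gamma)(x^{-\gamma}+(n-x)^{-\gamma})$, together with the fact that $H\in\mathcal S$ vanishes at $0$ and $1$ faster than any power, so $H(u)^2\le C(H)u^{2\alpha}$ and $H(u)^2\le C(H)(1-u)^{2\alpha}$ on $[0,1]$ (and likewise with exponent $2\gamma$); this gives $\tfrac n2\sum_{x}r_\alpha(x)H(\tfrac xn)^2\le C(H)n^{2-\alpha}\to0$ and $\tfrac{n^{\theta-1}}4\sum_{x}r_\gamma(x)H(\tfrac xn)^2\le C(H)n^{\theta-\gamma}\to0$. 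The only genuine bookkeeping lies in the main term — handling the truncation of the $z$-summation at the reservoirs and the two Taylor remainders — and every estimate there is precisely where the hypothesis $\alpha>2$ (which also makes $C_\alpha<\infty$) is needed; the rest is routine once the moment identities above are in place.
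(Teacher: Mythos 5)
Your proposal is correct and follows essentially the same route as the paper: take the expectation of the explicit quadratic variation under stationarity, Taylor-expand the symmetric bulk sum to extract $\tfrac{C_\alpha}{2}\|\nabla H\|_{L^2([0,1])}^2$ (with the remainder and boundary truncation controlled using $\alpha>2$), and kill the asymmetric bulk term and the two reservoir terms using $\theta<\gamma\wedge 2$ together with the rapid vanishing of $H\in\mathcal{S}$ at $0$ and $1$. The only difference is cosmetic: you track the truncation of the $z$-sum near the reservoirs and the two Taylor remainders a bit more explicitly than the paper does, which is fine.
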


\begin{proof}
By Fubini's theorem, we can pass the expectation inside the integral in \eqref{quad-var}. Consider the first integral in the right-hand side in \eqref{quad-var}. We have
\begin{align*}
& \int_0^t \mathbb{E} \Big[ \sum_{x,y \in \Lambda_n} n s_\alpha (y-x) (\eta_x - \eta_y )^2 \big( H(\tfrac{y}{n}) - H(\tfrac{x}{n}) \big)^2 \Big] ds\\
= & \frac{tn}{2} \sum\limits_{x,y \in \Lambda_n} s_\alpha(y - x) \big(H(\tfrac{y}{n}) - H(\tfrac{x}{n})\big)^2.
\end{align*}
By Taylor's expansion,  the last line can be written as
\[ \frac{t}{2n} \sum\limits_{x,y \in \Lambda_n, \atop y \neq x} \frac{1}{|y - x|^{\alpha - 1}} H'(\tfrac{x}{n})^2 \]
plus an error term which is bounded by
\[ C(H) \frac{t}{n^3} \sum\limits_{x,y \in \Lambda_n, \atop y \neq x} |y-x|^{3 - \alpha}. \]
The latter term vanishes as $n \to \infty$ since $\alpha > 2$, and the former term converges to $\frac{C_\alpha t}{2}  \| \nabla H \|_{L^2([0,1])}^2$.
Furthermore, we can bound 
\begin{align*}
& \int_0^t \mathbb{E} \Big[ \sum\limits_{x,y \in \Lambda_n} n^{\theta - 1} p_\gamma(y - x) \eta_x (1 - \eta_y)  \big(H(\tfrac{y}{n}) - H(\tfrac{x}{n})\big)^2 \Big] ds\\
= &  \frac{tn^{\theta - 1}}{4}  \sum\limits_{x, y \in \Lambda_n} s_\gamma(y - x) \big( H(\tfrac{y}{n}) - H(\tfrac{x}{n}) \big)^2\\
\leq & C t n^{\theta - \gamma} \int\int_{0 \leq u, v \leq 1, |u - v| \geq \frac{1}{n}} \frac{(H(u) - H(v))^2}{|u - v|^{1 + \gamma}} dudv.
\end{align*}
The last line has order $n^{\theta - \gamma}$ if $\gamma < 2$, $n^{\theta-2} \log n$ if $\gamma = 2$, $n^{\theta-2}$ if $\gamma > 2$, which vanishes in the limit $n \rightarrow \infty$ by Assumption \ref{assump: parameters}.

Then we consider the second integral in the right-hand side of \eqref{quad-var}. By \eqref{def-r-alpha}, the first part can be bounded by
\begin{align*}
\frac{nt}{2} \sum\limits_{x \in \Lambda_n} H(\tfrac{x}{n})^2 \Big(\sum\limits_{y \leq 0} s_\alpha(x - y) + \sum\limits_{y \geq n} s_\alpha(x - y) \Big) \leq C nt \sum\limits_{x \in \Lambda_n} H(\tfrac{x}{n})^2  [x^{- \alpha} + (n-x)^{- \alpha}].
\end{align*}
Since $H \in \mathcal{S}$,  for any $ k \in \mathbb{N}$ there is some $C (k, H) > 0$ such that $|H(\frac{x}{n})| \leq C(k, H) |\frac{x}{n}|^k$ for $x \in \Lambda_n$. Thus, choosing $k$ large enough such that $2k-\alpha > 0$, the first term in the last expression is bounded by
\begin{equation}\label{eqn 8}
C(k,H) t n^{1-2k} \sum_{x \in \Lambda_n} x^{2k-\alpha} \leq C (k,H) t n^{2-\alpha},
\end{equation}
which vanishes as $n \to \infty$ since $\alpha > 2$.
The same arguments hold for the other term. By using the same argument as above, the expectation of the remaining part for the second integral in the right-hand side of \eqref{quad-var} can be bounded by
\begin{align*}
& \mathbb{E} \Big[ \int_0^t \sum\limits_{x \in \Lambda_n} \Big(\frac{n^{\theta-1}}{2} \sum_{y \leq 0} p_\gamma (x-y)(1-\eta_x ) + \frac{n^{\theta-1}}{2} \sum_{y \geq n} p_\gamma (y-x)\eta_x \Big) H(\tfrac{x}{n})^2 ds\Big]\\
& \leq C t n^{\theta - 1} \sum_{x \in \Lambda_n}H(\tfrac{x}{n})^2  [x^{- \gamma} + (n-x)^{- \gamma}] \leq C t n^{\theta-\gamma},
\end{align*}
which vanishes as $n \to \infty$ since $\theta < \gamma$.  This ends the proof. 
\end{proof}

Finally, we can verify condition (iii)  by proving the following result.

\begin{lemma}
For any $H \in \mathcal{S}$, 
\[ \lim\limits_{n \to \infty} \mathbb{E} \Big[ \Big( \langle \mathcal{M}^n(H) \rangle_t - \mathbb{E} [\langle \mathcal{M}^n(H) \rangle_t] \Big)^2\Big] = 0, \]
where $\langle \mathcal{M}^n(H) \rangle_t$ is defined in \eqref{quad-var}.
\end{lemma}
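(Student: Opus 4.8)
The plan is to reduce the claim to a static estimate under $\nu_{1/2}$ and then expand the integrand of $\langle\mathcal{M}^n(H)\rangle_t$ into monomials in the centered variables $\bar\eta_x$. Write $\Phi_n(\eta)$ for the integrand in \eqref{quad-var}, so $\langle\mathcal{M}^n(H)\rangle_t=\int_0^t\Phi_n(\eta(s))\,ds$; since $\eta(\cdot)$ is stationary with marginal $\nu_{1/2}$, one has $\E[\langle\mathcal{M}^n(H)\rangle_t]=t\,E_{\nu_{1/2}}[\Phi_n]$, and Cauchy--Schwarz in the time variable together with stationarity gives
\[
\E\Big[\big(\langle\mathcal{M}^n(H)\rangle_t-\E[\langle\mathcal{M}^n(H)\rangle_t]\big)^2\Big]=\E\Big[\Big(\int_0^t\big(\Phi_n(\eta(s))-E_{\nu_{1/2}}[\Phi_n]\big)ds\Big)^2\Big]\le t^2\,\mathrm{Var}_{\nu_{1/2}}(\Phi_n).
\]
So it suffices to prove $\mathrm{Var}_{\nu_{1/2}}(\Phi_n)\to0$.

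Next I would isolate the fluctuating part of $\Phi_n$, the deterministic part contributing nothing. For the symmetric bulk term I use $(\eta_x-\eta_y)^2=\tfrac12-2\bar\eta_x\bar\eta_y$; for the asymmetric bulk term I first note $\eta_x(1-\eta_y)(\eta_x-\eta_y)^2=\eta_x(1-\eta_y)$ on $\{0,1\}^{\Lambda_n}$ and then substitute $\eta_x=\tfrac12+\bar\eta_x$; for the boundary terms I substitute $1-\eta_x=\tfrac12-\bar\eta_x$ and $\eta_x=\tfrac12+\bar\eta_x$. This leaves four fluctuating pieces: a quadratic piece $n\sum_{x,y\in\Lambda_n}s_\alpha(x-y)\bar\eta_x\bar\eta_y\,g_{x,y}$ with $g_{x,y}:=[H(\tfrac yn)-H(\tfrac xn)]^2$, a quadratic piece of the same form with $p_\gamma$ and $n^{\theta-1}$ replacing $s_\alpha$ and $n$, a bulk linear piece $n^{\theta-1}\sum_{x\in\Lambda_n}\bar\eta_x\sum_{y\in\Lambda_n}a_\gamma(y-x)g_{x,y}$ coming from the antisymmetric part of $p_\gamma$, and a boundary linear piece $\tfrac{n^{\theta-1}}2\sum_{x\in\Lambda_n}\bar\eta_x H(\tfrac xn)^2\big[\sum_{y\ge n}p_\gamma(y-x)-\sum_{y\le0}p_\gamma(x-y)\big]$. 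By Minkowski's inequality it is enough to bound the variance of each of these separately.

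For the quadratic pieces I use that under the product measure $\nu_{1/2}$ the mixed fourth moment $E_{\nu_{1/2}}[\bar\eta_{x_1}\bar\eta_{y_1}\bar\eta_{x_2}\bar\eta_{y_2}]$ vanishes unless the indices pair up (and $x\ne y$ in all the sums at play), so
\[
\mathrm{Var}_{\nu_{1/2}}\Big(n\sum_{x,y\in\Lambda_n}s_\alpha(x-y)\bar\eta_x\bar\eta_y\,g_{x,y}\Big)\le Cn^2\sum_{x,y\in\Lambda_n}s_\alpha(x-y)^2 g_{x,y}^2,
\]
and likewise for the $p_\gamma$ piece; for the linear pieces the variance is $\tfrac14$ times the sum of the squares of the coefficients, since the $\bar\eta_x$ are independent and centered. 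Plugging in $s_\alpha(z)^2=|z|^{-2-2\alpha}$, $p_\gamma(z)^2=|z|^{-2-2\gamma}\mathbf{1}_{\{z>0\}}$, the elementary bound $|H(\tfrac yn)-H(\tfrac xn)|\le C(H)\min\{1,|x-y|/n\}$ and, near the endpoints, the fast decay $|H(u)|\le C(k,H)(u^k+(1-u)^k)$ valid for every $k\in\N$ because $H\in\mathcal{S}$, a routine computation shows: the symmetric quadratic piece is $\le C(H)n^{-2}\sum_{x,y\in\Lambda_n}|x-y|^{2-2\alpha}\le C(H)n^{-1}$, which vanishes because $\alpha>2$; and the three remaining pieces are bounded, up to logarithmic corrections, by negative powers of $n$ (of the form $n^{2\theta-2-2\gamma}$ or $n^{2\theta-5}$ for the asymmetric quadratic piece, $n^{2\theta-1-2(\gamma\wedge2)}$ for the bulk linear piece, and $n^{2\theta-1-2\gamma}$ for the boundary linear piece), all of which vanish under Assumption \ref{assump: parameters}, in particular because $\theta<\gamma\wedge2$. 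Summing these estimates gives $\mathrm{Var}_{\nu_{1/2}}(\Phi_n)\to0$, which closes the argument.

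I expect the main work to lie in the careful bookkeeping of the boundary contributions: because of the long jumps these are genuine surface sums weighted by $\sum_{y\notin\Lambda_n}p_\gamma(\cdot)$, which decays only polynomially in the distance to $\{0,n\}$, so one must play this polynomial decay off against the rapid vanishing of $H$ at the endpoints to obtain summability, and this is precisely where the hypothesis $\theta<\gamma$ enters. The bulk terms are comparatively straightforward, with the symmetric quadratic one, of order $n^{-1}$, being the dominant contribution.
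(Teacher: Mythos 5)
Your argument is correct and follows essentially the same route as the paper: reduce by Cauchy--Schwarz in time and stationarity to a static variance bound under the product measure $\nu_{1/2}$, then exploit the vanishing of mixed moments of the $\bar\eta_x$, the Lipschitz bound $|H(\tfrac yn)-H(\tfrac xn)|\leq C|x-y|/n$, and the rapid decay of $H\in\mathcal S$ at the endpoints together with Assumption \ref{assump: parameters}. The only difference is bookkeeping: you split the integrand into monomials in $\bar\eta$ (quadratic and linear pieces), whereas the paper centers the blocks $M_{x,y}$, $N_{x,y}$, $Z_x$ wholesale, which gives slightly cruder but equally sufficient orders in $n$.
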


\begin{proof}
By Cauchy-Schwarz inequality,  the expectation in the lemma is bounded by some constant times the sum of the following three terms
\begin{align}
& \mathbb{E}  \Big[\Big( \int_0^t \sum\limits_{x,y \in \Lambda_n} ns_\alpha(x - y) M_{x,y}(\eta (s)) \left(H(\tfrac{y}{n}) - H(\tfrac{x}{n})\right)^2 ds\Big)^2 \Big]\label{sMar-1},\\
& \mathbb{E}  \Big[\Big( \int_0^t \sum\limits_{x,y \in \Lambda_n} n^{\theta - 1} p_\gamma(y - x) N_{x,y}(\eta (s)) \left(H(\tfrac{y}{n}) - H(\tfrac{x}{n})\right)^2 ds\Big)^2 \Big]\label{sMar-2},\\
& \mathbb{E} \Big[\Big( \int_0^t \sum\limits_{x \in \Lambda_n} \Big\{ \frac{n^{\theta - 1}}{2}\sum\limits_{y \leq 0} p_\gamma(x - y) - \frac{n^{\theta - 1}}{2}\sum\limits_{y \geq n} p_\gamma(y - x) \Big\} Z_{x}(\eta (s)) H(\tfrac{x}{n})^2 ds\Big)^2 \Big]\label{sMar-3},
\end{align}
where $M_{x,y}(\eta) = (\eta_y - \eta_x)^2 - \frac12$, $N_{x,y}(\eta) = \eta_x(1 - \eta_y) - \frac14$ and $Z_x (\eta) = \frac12 - \eta_x$.

We first prove that \eqref{sMar-1} vanishes as $n \rightarrow \infty$. Notice that, $\nu_{1/2}$ is a product measure on $\Lambda_n$ and $E_{\nu_{1/2}} [M_{x,y}(\eta)]$ $= 0, \, \forall x,y \in \Lambda_n$. By Cauchy-Schwarz inequality, \eqref{sMar-1} is bounded by 
\begin{align*}
&C t^2 n^2 \sum\limits_{x \neq y \in \Lambda_n} \sum\limits_{z \in \Lambda_n\setminus\{x\}}  s_\alpha(x - y) s_\alpha(x - z) \left(H(\tfrac{y}{n}) - H(\tfrac{x}{n})\right)^2 \left(H(\tfrac{z}{n}) - H(\tfrac{x}{n})\right)^2
E_{\nu_{1/2}} [M_{x,y}(\eta) M_{x,z}(\eta)]\\
& \leq C t^2 n^2 \sum\limits_{x \in \Lambda_n}\bigg( \sum\limits_{y \in \Lambda_n\setminus\{x\}} \left(H(\tfrac{y}{n}) - H(\tfrac{x}{n})\right)^2 s_\alpha(x - y) \bigg)^2 \\
&\leq Ct^2n^{3-2\alpha} \int_0^1 \Big( \int_{|v-u| \geq n^{-1}, 0 \leq v \leq 1} \frac{(H(u)-H(v))^2}{|u-v|^{1+\alpha}} dv\Big)^2du.
\end{align*}
Since $[H(u)-H(v)]^2 \leq C(H)(u-v)^2$ and $\alpha > 2$, the last term has order $n^{-1}$ and thus  vanishes as $n \to \infty$.

Similarly, \eqref{sMar-2} can be bounded by 
\begin{align*}
&C t^2 n^{2\theta-2} \sum\limits_{x \in \Lambda_n}\bigg( \sum\limits_{y \in \Lambda_n\setminus\{x\}} \left(H(\tfrac{y}{n}) - H(\tfrac{x}{n})\right)^2 p_\gamma (x - y) \bigg)^2 \\
&\leq Ct^2n^{2 \theta - 1 - 2 \gamma} \int_0^1 \Big( \int_{|v-u| \geq n^{-1}, 0 \leq v \leq 1} \frac{(H(u)-H(v))^2}{|u-v|^{1+\gamma}} dv\Big)^2du.
\end{align*}
Note that the last line has order $n^{2\theta-1-2\gamma}$ if $\gamma < 2$, $n^{2\theta - 5} (\log n)^2$ if $\gamma =2$, $n^{2\theta-5}$ if $\gamma > 2$, which converges to zero by Assumption \ref{assump: parameters}.

For \eqref{sMar-3}, we bound it by
\begin{align*}
C t^2 n^{2\theta-2} \sum\limits_{x \in \Lambda_n}  H(\tfrac{x}{n})^4 [x^{-2\gamma} + (n-x)^{-2\gamma}] \leq C t^2 n^{2\theta-2\gamma-1},
\end{align*}
which vanishes as $n \to \infty$.  In the last inequality, we used the same argument as in \eqref{eqn 8}. This concludes the proof.
\end{proof}

\subsection{Tightness of the integral terms}  In this subsection, we prove the tightness of the sequences $\{ \int_0^t A_s^{n,j}(H)ds, t \in [0,T] \}_{n \in \mathbb{N}},$  $j= 1,...,5$. For this purpose, we first recall Kolmogorov-Centsov's tightness criterion, see \cite[page 64]{karatzas1991brownian} for example.

\begin{proposition}\label{pro-tight-KC}
A sequence of continuous processes $\{ X_t^n, t\in[0,T]\}_{n\in\mathbb{N}}$ is tight with respect to the uniform topology of $\mathcal{C}([0,T]; \mathbb{R})$, if there exist constants $K,a,b>0$ such that
	\begin{equation*}
		E\left[ |X_t^n - X_s^n|^a \right]
		\leq K|t-s|^{1+b}
	\end{equation*}
	for any $s,t\in[0,T]$ and any $n\in\mathbb{N}$.
\end{proposition}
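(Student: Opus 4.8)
The plan is to deduce tightness in $\mathcal{C}([0,T];\R)$ from the Arzela--Ascoli theorem, which reduces it to two things: the tightness in $\R$ of the initial values $\{X_0^n\}_n$ --- trivially satisfied when this criterion is applied in the present paper, since each process $\int_0^{\cdot}A^{n,j}_s(H)\,ds$ starts from $0$ --- and the asymptotic equicontinuity statement that, for every $\epsilon>0$,
\[
\lim_{\delta\downarrow 0}\ \limsup_{n\to\infty}\ \P\big(w(X^n,\delta)>\epsilon\big)=0,
\]
where $w(x,\delta):=\sup_{|s-t|\le\delta}|x(t)-x(s)|$ denotes the modulus of continuity. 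I would establish the latter by the classical dyadic chaining argument behind Kolmogorov's continuity theorem, taking care that every bound produced is uniform in $n$.

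First I would fix an exponent $\gamma\in(0,b/a)$ and, for $m\in\N$ and $0\le k<2^m$, set $t_{m,k}:=kT2^{-m}$. Markov's inequality and the hypothesis give
\[
\P\big(|X^n_{t_{m,k+1}}-X^n_{t_{m,k}}|>2^{-\gamma m}\big)\ \le\ 2^{\gamma a m}\,\E\big[|X^n_{t_{m,k+1}}-X^n_{t_{m,k}}|^a\big]\ \le\ KT^{1+b}\,2^{-m(1+b-\gamma a)}.
\]
Writing $\Omega_m^n$ for the event $\big\{\max_{0\le k<2^m}|X^n_{t_{m,k+1}}-X^n_{t_{m,k}}|>2^{-\gamma m}\big\}$, a union bound over the $2^m$ dyadic intervals of level $m$ yields
\[
\P(\Omega_m^n)\ \le\ KT^{1+b}\,2^{-m(b-\gamma a)}\ =:\ p_m,
\]
and $\sum_{m}p_m<\infty$ because $b-\gamma a>0$; crucially $p_m$ does not depend on $n$.

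Next I would run the standard chaining step: on the event $\bigcap_{m\ge m_0}(\Omega_m^n)^c$, expressing any $s<t$ with $0<t-s<T2^{-m_0}$ through its successive dyadic approximations and summing the level-$m$ increments produces, with a deterministic constant $C=C(\gamma,T)$ stemming from $\sum_{m\ge0}2^{-\gamma m}<\infty$, the estimate $|X^n_t-X^n_s|\le C(t-s)^{\gamma}$; hence $w(X^n,\delta)\le C\delta^{\gamma}$ there, for all $\delta<T2^{-m_0}$. Since $\P\big(\bigcup_{m\ge m_0}\Omega_m^n\big)\le\rho_{m_0}:=\sum_{m\ge m_0}p_m$ and $\rho_{m_0}\to0$ as $m_0\to\infty$ uniformly in $n$, the equicontinuity claim follows: given $\epsilon,\eta>0$, pick $m_0$ with $\rho_{m_0}<\eta$ and then $\delta<T2^{-m_0}$ with $C\delta^{\gamma}<\epsilon$, so that $\P(w(X^n,\delta)>\epsilon)\le\rho_{m_0}<\eta$ for every $n$. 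Combined with Arzela--Ascoli and the (trivial) tightness of the initial values, this shows that $\{X^n_t,t\in[0,T]\}_n$ is tight in $\mathcal{C}([0,T];\R)$.

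Since the whole scheme is classical, there is no genuine obstacle; the only point requiring attention --- and it is a mild one --- is to keep all probability estimates uniform in $n$, which is automatic because the hypothesis supplies the same constants $K,a,b$ for every $n$. In the application of this subsection to $X^n_t=\int_0^t A^{n,j}_s(H)\,ds$ one takes $a=2$ and $b=1$, and verifies $\E\big[(X^n_t-X^n_s)^2\big]\le C(H)(t-s)^2$ by Cauchy--Schwarz and $\nu_{1/2}$-stationarity, exactly along the lines of the estimates carried out above for the individual terms $A^{n,j}_s$.
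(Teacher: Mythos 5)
Your argument is correct. Note, however, that the paper does not prove this proposition at all: it is recalled as a classical result with a citation to Karatzas--Shreve, so there is no "paper proof" to compare against; what you have written is precisely the standard proof of the cited criterion, via Markov's inequality at dyadic scales, a union bound giving $\P(\Omega_m^n)\le KT^{1+b}2^{-m(b-\gamma a)}$ summable since $\gamma<b/a$, dyadic chaining to convert the good event into a uniform-in-$n$ H\"older modulus, and the Arzel\`a--Ascoli characterization of tightness in $\mathcal{C}([0,T];\R)$. One point you handle well and is worth keeping explicit: as literally stated the proposition needs, in addition to the increment bound, tightness of the initial values $\{X_0^n\}_n$ (otherwise $X^n_t\equiv n$ is a counterexample); in the paper's application the processes $\int_0^{\cdot}A^{n,j}_s(H)\,ds$ all start from $0$, so this is automatic, exactly as you observe. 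Your closing remark on the application is consistent with the paper for $j=1,2,3,5$ (where the bound is $C(H)(t-s)^2$ or $o_{n,H}(1)(t-s)^2$), though for $j=4$ the paper needs a finer interpolation argument yielding exponents such as $(t-s)^{1+\frac{a}{a+b}}$ and $(t-s)^{4/3}$; that is outside the scope of the proposition itself.
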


For $j=1$, following the proof of Lemma \ref{lem: A^n_1}, we  have
\[\E \Big[\Big(\int_s^t A_r^{n,1}(H)dr\Big)^2 \Big] \leq C(H) (t-s)^2,\]
so this term is  tight. For $j= 2, 3, 5$, in the proof of Lemma \ref{lem: A^n_2}, we actually showed that 
\begin{equation}\label{eqn 9}
\E \Big[\Big(\int_s^t A_r^{n,j}(H)dr\Big)^2 \Big] \leq o_{n,H} (1) (t-s)^2,
\end{equation}
where $o_{n,H} (1)$ is some constant depending on $H$ and vanishes as $n \rightarrow \infty$, which verifies the tightness of the integral terms for $j=2,3,5$.  

Finally, we prove the tightness of
\[\Big\{ \int_0^t A_s^{n,4}(H)ds, t \in [0,T] \Big\}_{n \in \mathbb{N}}.\]

\subsubsection{The case $\theta < 1$}  For $\theta < 1$, we actually proved in section 4.1 that \eqref{eqn 9} also holds for $j=4$, which proves the tightness.

\subsubsection{The case $1 \leq \theta \leq  \frac32$}  By Lemmas \ref{lem-out-region-K} and \ref{lem-Taylor-high-order}, we only need to show the tightness of the following process
\begin{equation}\label{eqn 3*}
\int_0^t n^{\theta-3/2} \sum\limits_{\substack{|x - y| \leq K \\ x,y\in\Lambda_n}} (x-y)a_\gamma(x-y) H^\prime (\tfrac{x}{n})\bar{\eta}_x(s) \bar{\eta}_y(s) ds,
\end{equation}
where $K \gg n^{\frac{2\theta-2}{2\gamma-1}}$.  On one hand, by Lemmas \ref{lem-one-order-psi}, \ref{lem-Ltheta-differ} and \ref{lem: III},  for any positive integer $\lambda$,
\begin{multline}\label{eqn 10}
	\E \Big[ \Big(\int_s^t n^{\theta-3/2} \sum\limits_{\substack{|x - y| \leq K \\ x,y\in\Lambda_n}} (x-y)a_\gamma(x-y) H^\prime (\tfrac{x}{n})\bar{\eta}_x(r) \bar{\eta}_y(r) dr\Big)^2 \Big] \\
	\leq C (t-s)^2 \Big(\frac{n^{2\theta - 2}}{K} + \frac{K^{\lambda+1}}{n^{4-2\theta+\lambda}}\Big)+ C (t-s) \left( \frac{K^{4 - 2\gamma}}{n^{4 - 2\theta}}
	+ \frac{K}{n^{4 - 2\theta}} + \frac{K^{\lambda+1}}{n^{2-2\theta+\lambda}}\right).
\end{multline}
Let $K = n^{1-\delta}$ and let $\lambda$ be large enough such that
\begin{equation}\label{eqn 11}
2 \theta-2 < 1 - \delta, \quad (\lambda+1)(1-\delta) < 2-2\theta+\lambda,
\end{equation}
then the above bound is of order $o_{n,H} (1) (t-s)^2 + C (t-s)/n^a$ for some $a > 0$.  On the other hand, using Cauchy-Schwarz inequality,
\begin{align*}
& \mathbb{E} \Big[\Big(\int_s^t n^{\theta-3/2} \sum\limits_{\substack{|x - y| \leq K \\ x,y\in\Lambda_n}} (x-y)a_\gamma(x-y) H^\prime (\tfrac{x}{n})\bar{\eta}_x(r) \bar{\eta}_y(r) dr \Big)^2\Big]\\
\leq & (t-s)^2 n^{2\theta - 3} E_{\nu_{1/2}} \Big[\Big( \sum\limits_{\substack{|x - y| \leq K \\ x,y\in\Lambda_n}} (x-y)a_\gamma(x-y) H' (\tfrac{x}{n})\bar{\eta}_x \bar{\eta}_{y}  \Big)^2\Big]\\
\leq & C (t-s)^2 n^{2\theta - 3} \sum\limits_{\substack{|x - y| \leq K \\ x,y\in\Lambda_n}} (x-y)^2 a_\gamma (x-y)^2 H' (\tfrac{x}{n})^2
\leq C (t-s)^2 n^{2\theta - 2}.
\end{align*}
Then, the tightness of the sequence $\{\int_0^t A_s^{n,4}(H)ds, t \in [0,T]\}$ follows from the above two estimates, the following lemma and the trivial fact that $(t-s)^2 \leq C(T) (t-s)^{1+\frac{a}{a+b}}$.

\begin{lemma}
For any $a,b>0$, 
\[ \min\Big\{\frac{t}{n^a}, t^2n^b\Big\} \leq t^{1+\frac{a}{a+b}}.\]
\end{lemma}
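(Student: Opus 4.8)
The plan is to reduce the claim to the elementary interpolation inequality
\[
\min\{A,B\} \le A^{1-\lambda} B^{\lambda}, \qquad A,B \ge 0,\ \lambda \in [0,1],
\]
and then to apply it with a suitably chosen exponent. The interpolation inequality itself is immediate: writing $m := \min\{A,B\}$ we have $m = m^{1-\lambda}\,m^{\lambda}$, and since $m \le A$, $m \le B$, while $x \mapsto x^{c}$ is nondecreasing on $[0,\infty)$ for every $c \ge 0$, we get $m^{1-\lambda} \le A^{1-\lambda}$ and $m^{\lambda} \le B^{\lambda}$, hence $m \le A^{1-\lambda} B^{\lambda}$.

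Next I would substitute $A = t n^{-a}$ and $B = t^2 n^{b}$, which yields
\[
\min\Big\{\tfrac{t}{n^{a}},\, t^2 n^{b}\Big\} \le \big(t n^{-a}\big)^{1-\lambda}\big(t^2 n^{b}\big)^{\lambda} = t^{\,1+\lambda}\, n^{-a(1-\lambda)+b\lambda}.
\]
The point is then to choose $\lambda = \tfrac{a}{a+b}$, which lies in $(0,1)$ precisely because $a,b>0$. With this choice the exponent of $n$ vanishes, $-a(1-\lambda)+b\lambda = 0$, and what remains is $t^{\,1+\lambda} = t^{\,1+\frac{a}{a+b}}$, exactly the asserted bound (the degenerate case $t=0$ being trivial).

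There is essentially no obstacle here: the only thing to check is that $\lambda = a/(a+b)\in(0,1)$, which is clear. One could alternatively split into the two cases $t \ge n^{-(a+b)}$ and $t < n^{-(a+b)}$ according to which of the two terms realizes the minimum, and verify the inequality directly in each case; but the interpolation argument above is shorter and avoids the casework.
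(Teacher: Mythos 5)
Your proof is correct. It takes a slightly different route from the paper: you bound the minimum by the weighted geometric mean, $\min\{A,B\}\le A^{1-\lambda}B^{\lambda}$, and then choose $\lambda=\tfrac{a}{a+b}$ so that the powers of $n$ cancel, giving $t^{1+\frac{a}{a+b}}$ in one stroke with no casework. The paper instead splits at the threshold $t=n^{-(a+b)}$, i.e.\ according to which of the two quantities realizes the minimum: for $t\le n^{-(a+b)}$ it writes $t^2n^b=t^{1+\frac{a}{a+b}}(tn^{a+b})^{\frac{b}{a+b}}\le t^{1+\frac{a}{a+b}}$, and for $t>n^{-(a+b)}$ it uses $n^{-a}<t^{\frac{a}{a+b}}$ to get $tn^{-a}\le t^{1+\frac{a}{a+b}}$. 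The two arguments are essentially the same computation organized differently: your interpolation version is marginally cleaner and generalizes immediately (any competing powers of $t$ and $n$ can be balanced by the same choice of $\lambda$), while the paper's case split makes explicit the crossover scale $t\sim n^{-(a+b)}$ at which each of the two bounds from the tightness estimate is the operative one, which is the way the lemma is actually used there.
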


\begin{proof}
	If $t \leq n^{-(a+b)}$, then
	\[t^2 n^b = t^{1+\frac{a}{a+b}} (tn^{a+b})^{\frac{b}{a+b}} \leq t^{1+\frac{a}{a+b}}.\]
	If $t > n^{-(a+b)}$, then $n^{-a} < t^{\frac{a}{a+b}}$, and thus
	\[tn^{-a} \leq t^{1+\frac{a}{a+b}}, \]
	concluding the proof.
\end{proof}

\subsubsection{The case $\theta=3/2$} In this case, we take $K=n^{1-\delta}$ and $\lambda > 0$ large enough such that $(2\gamma-1)(1-\delta) > 1$ and that the second condition in \eqref{eqn 11} holds. Note that for $\theta = 3/2$, the first condition in \eqref{eqn 11} cannot hold any more. The problem comes from the bound $n^{\theta-2}/K$ in \eqref{eqn 10}. By Lemma \ref{lem-one-order-psi}, this bound comes from the term
\begin{equation}\label{me}
	 \int_0^t \sum_{x = 1}^{n - K - 1} \sum_{y=1}^K y a_\gamma (y)\psi_x^K(\eta(s)) H'(\tfrac{x}{n}) ds.
\end{equation}
Thus, we only need to prove tightness of the above term.

In Subsection \ref{subsec: critical}, we have shown that
\begin{multline*}
	\E \Big[\Big(\int_s^t 2 \sum_{x = 1}^{n - K - 1} \sum_{y=1}^K y a_\gamma (y)\psi_x^K(\eta(r)) H'(\tfrac{x}{n}) dr - m \int_s^t \sum_{x=1}^{n-1} \psi_x^{\varepsilon n} H^\prime (\tfrac{x}{n}) dr \Big)^2\Big] \\
	\leq C(H) (t-s)^2 \Big(\frac{1}{\varepsilon^2 n^\delta} + \frac{n}{n^{(2\gamma-1)(1-\delta)}}\Big)  + C(H) (t-s) \big(\varepsilon^2 + \varepsilon\big).
\end{multline*}
Taking $K = \varepsilon n$ in Lemma \ref{lem-one-order-psi},  we have
\[	\E \Big[\Big( m \int_s^t \sum_{x=1}^{n-1} \psi_x^{\varepsilon n} H^\prime (\tfrac{x}{n}) dr \Big)^2\Big] \leq C(H) (t-s)^2 \varepsilon^{-1}.\]
Thus, by Cauchy-Schwarz inequality, and since $\varepsilon < 1$,
\begin{multline}\label{eqn 12}
	\E \Big[\Big(\int_s^t 2 \sum_{x = 1}^{n - K - 1} \sum_{y=1}^K y a_\gamma (y)\psi_x^K(\eta(r)) H'(\tfrac{x}{n}) dr  \Big)^2\Big] \\
	\leq C(H) (t-s)^2 \Big(\frac{1}{\varepsilon^2 n^\delta} + \varepsilon^{-1} + o_n (1) \Big)  + C(H) (t-s) \varepsilon.
\end{multline}
Note also that by Lemma \ref{lem-one-order-psi}, the last expectation is bounded by $C(H) (t-s)^2 n^\delta$. Next, we consider the following three cases:
\begin{itemize}
	\item[{\rm (i)}] If $n^{-3\delta} < t-s < 1$, then we take $\varepsilon = (t-s)^{1/3}$,  and bound the term in \eqref{eqn 12} by 
	\[C(H) \big((t-s)^{4/3} + (t-s)^{5/3} \big) \leq C(H) (t-s)^{4/3}.\]
	\item[{\rm (ii)}] If $t -s \leq n^{-3\delta} $, then 
	\[(t-s)^2 n^\delta \leq (t-s)^{5/3} \leq (t-s)^{4/3}.\]
	\item[{\rm (iii)}] If $t-s \geq 1$, then we take $\varepsilon = 1/2$,  and bound the term in \eqref{eqn 12} by  $C(H)(t-s)^2 \leq C(H,T) (t-s)^{4/3}$.
\end{itemize}
In any cases, we have
\[\E \Big[\Big(\int_s^t 2 \sum_{x = 1}^{n - K - 1} \sum_{y=1}^K y a_\gamma (y)\psi_x^K(\eta(r)) H'(\tfrac{x}{n}) dr  \Big)^2\Big] \leq C(H,T)(t-s)^{4/3},\]
thus concluding the proof.

\section{Proof of Proposition \ref{pro: definitions equivalent}}\label{sec: pf definitions equivalent}

In this section, we prove  Proposition \ref{pro: definitions equivalent} and therefore derive the uniqueness for the solution of the martingale problem in Definition \ref{def burgers 2}. Let $\mathcal{Y}_t$ be defined as  the solution of the martingale problem in Definition \ref{def burgers 2}. The target  is to construct an extension $\tilde{\mathcal{Y}}_t \in \mathcal{C} ([0,T]; \mathcal{S}'_{\textrm{Dir}})$ of $\mathcal{Y}_t$, which solve the martingale problem in Definition \ref{def burgers} and satisfies the energy estimations therein. We will follow the strategy  in \cite{bernardin2022equilibrium}. The main difference from the precious work comes from the non-linear term $\mathcal{A}_t$.

By definition,  for any $H \in \mathcal{S}$,
\[\mathcal{M}_t (H) := \mathcal{Y}_t (H) - \mathcal{Y}_0 (H) - A \int_0^t \mathcal{Y}_s (\Delta H) ds - B \mathcal{A}_t (H)\]
is a continuous martingale with quadratic variation 
\[\<\mathcal{M} (H)\>_t = t D \|\nabla H\|_{L^2 ([0,1])}^2,\]
where $\mathcal{A}_t (H)$ is the $L^2 (\P)$-limit of $\mathcal{A}_{0,t}^\varepsilon (H)$ as $\varepsilon \rightarrow 0$. Furthermore, $\{\mathcal{Y}_t \in \mathcal{S}', t \in [0,T]\}$ is stationary in the sense of \eqref{stationary def} and for any $u \in \{0, 1\}$,
\begin{equation}\label{cond-bu}
\lim\limits_{\varepsilon \to 0} \mathbb{E} \Big[ \sup\limits_{0 \leq t \leq T} \Big( \int_0^t \mathcal{Y}_s (\iota_{\varepsilon,u}) ds \Big)^2 \Big] = 0. 
\end{equation}

Consider the space
\[ \mathcal{J}= \left\{\{x_t, t \in [0,T]\}\, \text{is progressively measurable};\; \Big(\mathbb{E}\Big[\int_0^T |x_t|^2 dt\Big]\Big)^{1/2} < \infty \right\}.\]
Then, by \eqref{stationary def} and since $\mathcal{S}$ is dense in $L^2 ([0,1])$, for any $H \in L^2 ([0,1])$, we can define a unique process $\mathcal{Y}_t (H)$ on $\mathcal{J}$, such that $\{\mathcal{Y}_t(H), t\in[0,T]\}$ for $H \in L^2 ([0,1])$ coincides with $\{\mathcal{Y}_t(H), t\in[0,T]\}$ for $H \in \mathcal{S}$, see \cite[Lemma 5.1]{bernardin2022equilibrium} for example. Particularly, $\mathcal{Y}_t(H)$ can be defined for $H \in \tilde{\mathcal{S}}$ and $H \in \mathcal{S}_{\rm Dir}$, where
\[ \tilde{\mathcal{S}} = \{ H \in \mathcal{H}^2([0, 1]): H(0) = H(1) = H'(0) = H'(1) = 0\}, \]
with $\mathcal{H}^2([0,1])$ being the classical Sobolev space $\mathcal{W}^{2,2}([0,1])$ on $[0,1]$. Denote by $\mathcal{B}$  the Banach space of real processes $\{x_t; t \in [0,T]\}$ such that $\mathbb{E}[\sup_{t \in [0,T]} |x_t|^2] < \infty$.

The next result states that the martingale problem in Definition \ref{def burgers 2} can also be defined in $\tilde{\mathcal{S}}$.

\begin{lemma}

Let $\{\mathcal{Y}_t, t \in [0,T]\}$ be defined as in Definition \ref{def burgers 2}. Then, for any $H \in \tilde{\mathcal{S}}$,
\[ \tilde{\mc{M}}_t(H) = \mathcal{Y}_t(H) - \mathcal{Y}_0(H) - A\int_0^t \mathcal{Y}_s(\Delta H) ds - B\mathcal{A}_t(H) \]
is also a continuous martingale with quadratic variation $t \rightarrow tD\|\nabla H\|_{L^2([0,1])}^2$. Furthermore, $\mathcal{Y}_{t}$ satisfies the energy estimates: for any $\delta^\prime < \delta$,
\[ \mathbb{E}\Big[\Big( \mathcal{A}_{s,t}^\delta(H) - \mathcal{A}_{s,t}^{\delta'}(H)\Big)^2 \Big] 
\leq C (t - s) \delta \|\nabla H\|_{L^2([0,1])}^2,\qquad \forall H \in \tilde{\mathcal{S}}, \]
where $\mathcal{A}_{s,t}^\delta$ is defined in \eqref{def-A-delta}.

\end{lemma}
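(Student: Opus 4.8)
The plan is to argue by density. Since $\mathcal{H}^2([0,1])$ embeds continuously into $\mathcal{C}^1([0,1])$, the set $\tilde{\mathcal{S}}$ is exactly the closure in the $\mathcal{H}^2([0,1])$-norm of $\mathcal{C}_c^\infty((0,1))$, and $\mathcal{C}_c^\infty((0,1)) \subset \mathcal{S} \subset \tilde{\mathcal{S}}$; hence for $H \in \tilde{\mathcal{S}}$ we may pick $H_k \in \mathcal{S}$ with $H_k \to H$, $\nabla H_k \to \nabla H$ and $\Delta H_k \to \Delta H$ in $L^2([0,1])$. The idea is then to pass to the limit $k \to \infty$ in the martingale decomposition and the energy estimate of Definition \ref{def burgers 2}, which hold for each $H_k$.

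First I would handle the linear terms. Using the stationarity relation \eqref{stationary def} extended to $L^2([0,1])$ test functions, $\E[(\mathcal{Y}_t(H_k) - \mathcal{Y}_t(H))^2] = \tfrac{D}{2A}\|H_k - H\|_{L^2([0,1])}^2 \to 0$ (and likewise for $\mathcal{Y}_0$), while the Cauchy--Schwarz inequality in the time variable gives $\int_0^\cdot \mathcal{Y}_s(\Delta H_k)\,ds \to \int_0^\cdot \mathcal{Y}_s(\Delta H)\,ds$ in $\mathcal{B}$. For the martingale term, $\mathcal{M}_\cdot(H_k) - \mathcal{M}_\cdot(H_j) = \mathcal{M}_\cdot(H_k - H_j)$ is a continuous martingale of quadratic variation $t \mapsto tD\|\nabla(H_k - H_j)\|_{L^2([0,1])}^2$, so Doob's inequality yields $\E[\sup_{t \le T}(\mathcal{M}_t(H_k) - \mathcal{M}_t(H_j))^2] \le 4TD\|\nabla(H_k - H_j)\|_{L^2([0,1])}^2 \to 0$; thus $\mathcal{M}_\cdot(H_k)$ is Cauchy in $\mathcal{B}$ and converges to a continuous martingale $\tilde{\mathcal{M}}_\cdot(H)$ with $\langle \tilde{\mathcal{M}}(H)\rangle_t = tD\|\nabla H\|_{L^2([0,1])}^2$.

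The substantial step is the nonlinear term $\mathcal{A}_t$. I would first extend the energy estimate to $\tilde{\mathcal{S}}$: since $\mathcal{A}_{s,t}^\varepsilon$ is linear in $\nabla(\cdot)$ and $\E[\mathcal{Y}_r(\iota_{\varepsilon,u})^2] = \tfrac{D}{2A}\varepsilon^{-1}$, one has, for each fixed $\varepsilon$, the crude bound $\E[|\mathcal{A}_{s,t}^\varepsilon(G)|] \le \tfrac{D}{2A}\varepsilon^{-1}(t-s)\|\nabla G\|_{L^1([0,1])}$, whence $\mathcal{A}_{s,t}^\varepsilon(H_k) - \mathcal{A}_{s,t}^\delta(H_k) \to \mathcal{A}_{s,t}^\varepsilon(H) - \mathcal{A}_{s,t}^\delta(H)$ in probability; Fatou's lemma combined with the energy estimate of Definition \ref{def burgers 2} for $H_k$ then gives $\E[(\mathcal{A}_{s,t}^\varepsilon(H) - \mathcal{A}_{s,t}^\delta(H))^2] \le C(t-s)\varepsilon\|\nabla H\|_{L^2([0,1])}^2$ for all $H \in \tilde{\mathcal{S}}$. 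A dyadic telescoping in $\varepsilon$ now shows $\mathcal{A}_{0,t}^\varepsilon(H)$ converges in $L^2(\P)$ as $\varepsilon \to 0$; I would define $\mathcal{A}_t(H)$ to be this limit, with the companion bound $\|\mathcal{A}_{0,t}^\varepsilon(H) - \mathcal{A}_t(H)\|_{L^2(\P)} \le C T^{1/2}\varepsilon^{1/2}\|\nabla H\|_{L^2([0,1])}$. Taking $\varepsilon = 1/2$ and combining with the crude $L^1$ bound (and $\|\nabla G\|_{L^1} \le \|\nabla G\|_{L^2}$ on $[0,1]$) yields the a priori continuity estimate $\|\mathcal{A}_t(G)\|_{L^1(\P)} \le C(T)\|\nabla G\|_{L^2([0,1])}$ for all $G \in \tilde{\mathcal{S}}$; since $\mathcal{A}_t$ is linear, applying this with $G = H - H_k$ shows $\mathcal{A}_t(H_k) \to \mathcal{A}_t(H)$ in $L^1(\P)$.

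Finally I would assemble the pieces. For $H_k \in \mathcal{S}$, Definition \ref{def burgers 2} gives $B\mathcal{A}_t(H_k) = \mathcal{Y}_t(H_k) - \mathcal{Y}_0(H_k) - A\int_0^t\mathcal{Y}_s(\Delta H_k)\,ds - \mathcal{M}_t(H_k)$; letting $k \to \infty$, the right-hand side converges in $L^2(\P)$ by the previous two paragraphs, so $\mathcal{A}_t(H_k)$ converges in $L^2(\P)$, and its $L^2$-limit must coincide with the $L^1$-limit $\mathcal{A}_t(H)$ found above. This produces exactly $\tilde{\mathcal{M}}_t(H) = \mathcal{Y}_t(H) - \mathcal{Y}_0(H) - A\int_0^t\mathcal{Y}_s(\Delta H)\,ds - B\mathcal{A}_t(H)$ with $\mathcal{A}_t(H) = \lim_{\varepsilon\to0}\mathcal{A}_{0,t}^\varepsilon(H)$ in $L^2(\P)$; the continuity of $\tilde{\mathcal{M}}_\cdot(H)$ and the value of its quadratic variation were obtained above, and the energy estimate on $\tilde{\mathcal{S}}$ is the one established in the third paragraph. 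The hard part is precisely this control of $\mathcal{A}_t$: because the solution $\mathcal{Y}$ is not assumed Gaussian, there is no direct way to bound fourth moments of $\mathcal{Y}_r(\iota_{\varepsilon,u})$, so the $L^2$-continuity of $\mathcal{A}_t$ needed to pass to the limit must be bootstrapped from the energy estimate itself (available on $\tilde{\mathcal{S}}$ only after the Fatou argument), rather than proved by a naive second-moment computation; everything else is routine.
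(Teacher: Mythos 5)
Your proposal is correct, and its skeleton is the same as the paper's: approximate $H\in\tilde{\mathcal{S}}$ in $\mathcal{H}^2$ by functions of $\mathcal{S}$ (the paper invokes \cite[Lemma A.4]{bernardin2022equilibrium}, you use $\mathcal{H}^2_0$-density of $\mathcal{C}_c^\infty((0,1))\subset\mathcal{S}$, both fine), pass to the limit term by term using stationarity for $\mathcal{Y}_t(H_k)$ and $\int_0^t\mathcal{Y}_s(\Delta H_k)\,ds$, Doob for the martingales, and a triangle/Fatou argument to extend the energy estimate and define $\mathcal{A}_t(H)$. Where you genuinely diverge is the control of the nonlinear term: the paper bounds $\E\big[\big(\mathcal{A}^{\delta}_{s,t}(H)-\mathcal{A}^{\delta}_{s,t}(H_\varepsilon)\big)^2\big]\le C(\delta)\|\nabla(H-H_\varepsilon)\|^2_{L^2([0,1])}$ ``by the definition of $\mathcal{A}^{\delta}_{s,t}$'', which implicitly requires a fourth-moment bound on $\mathcal{Y}_r(\iota_{\delta,u})$ (immediate if the fixed-time marginal is the Gaussian white noise, but not a literal consequence of the covariance condition \eqref{stationary def}); you instead use only second moments, via the crude bound $\E[|\mathcal{A}^{\varepsilon}_{s,t}(G)|]\le \tfrac{D}{2A}\varepsilon^{-1}(t-s)\|\nabla G\|_{L^1([0,1])}$, Fatou along an a.s.\ subsequence to transfer the energy estimate to $\tilde{\mathcal{S}}$, and the bootstrapped continuity $\|\mathcal{A}_t(G)\|_{L^1(\P)}\le C(T)\|\nabla G\|_{L^2([0,1])}$, finally identifying the $L^1$-limit $\mathcal{A}_t(H)$ with the $L^2$-limit produced by the rearranged martingale identity. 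This buys you independence from any Gaussianity/fourth-moment assumption on the marginal, at the price of a slightly longer identification step; the paper's route is shorter but rests on the stronger (standard, yet unstated) reading of condition (1). Your brief assertions that the limit martingale is continuous with quadratic variation $tD\|\nabla H\|^2_{L^2([0,1])}$ are at the same level of detail as the paper's and are standard consequences of the $L^2$-uniform convergence, so I see no gap.
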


\begin{proof}

For any $H \in \tilde{\mathcal{S}}$, there is a sequence $\{H_\varepsilon\}_{\varepsilon > 0} \subset \mathcal{S}$ such that $\lim\limits_{\varepsilon \to 0} H_\varepsilon^{(k)} = H^k$ in $L^2 ([0,1])$ for $k = 0,1,2$ (c.f. \cite[Lemma A.4]{bernardin2022equilibrium}). Furthermore,
\[ \mc{M}_t(H_\varepsilon) = \mathcal{Y}_t(H_\varepsilon) - \mathcal{Y}_0(H_\varepsilon) - A\int_0^t \mathcal{Y}_s(\Delta H_\varepsilon) ds - B\mathcal{A}_t(H_\varepsilon) \]
is a continuous martingale with quadratic variation $t \rightarrow tD\|\nabla H_\varepsilon\|_{L^2([0,1])}^2$.

With the similar arguments  in \cite[Section 5.1]{bernardin2022equilibrium}, by Doob's inequality and since the process is stationary, the processes $\mathcal{Y}_\cdot (H_\varepsilon)$ and $\mathcal{Y}_\cdot (\Delta H_\varepsilon)$ converge to $\mathcal{Y}_\cdot (H)$, and respectively $\mathcal{Y}_\cdot (\Delta H)$. Moreover, there is a martingale $\tilde{\mc{M}}_\cdot (H)$ such that $\mc{M}_\cdot (H_\varepsilon)$ converges to $\tilde{\mc{M}}_\cdot (H)$ in the space $\mathcal{B}$.

It remains to prove the convergence of $\mathcal{A}_t(H_\varepsilon)$. Notice that $\mathcal{A}_t(H_\varepsilon)$ is the $L^2(\mathbb{P})$-limit of $\mathcal{A}_{0,t}^\delta(H_\varepsilon)$ as $\delta \to 0$ and the energy estimate is satisfied: for any $\delta' < \delta$,
\begin{equation}\label{cond-ene}
\mathbb{E}\Big[\Big( \mathcal{A}_{s,t}^\delta(H_\varepsilon) - \mathcal{A}_{s,t}^{\delta'}(H_\varepsilon) \Big)^2\Big] \leq C (t - s) \delta \|\nabla H_\varepsilon\|_{L^2([0,1])}^2, \qquad \forall\, 0 < s < t. 
\end{equation}
Then, for $\mathcal{A}_{s,t}^\delta(H), H \in \tilde{\mathcal{S}}$, by Cauchy-Schwarz inequality,
\begin{align*}
&\mathbb{E}\Big[\Big( \mathcal{A}_{s,t}^\delta(H) - \mathcal{A}_{s,t}^{\delta'}(H) \Big)^2\Big]\\
\leq & 3 \mathbb{E}\Big[\Big( \mathcal{A}_{s,t}^\delta(H) - \mathcal{A}_{s,t}^{\delta}(H_\varepsilon) \Big)^2\Big]
+ 3 \mathbb{E}\Big[\Big( \mathcal{A}_{s,t}^\delta(H_\varepsilon) - \mathcal{A}_{s,t}^{\delta'}(H_\varepsilon) \Big)^2\Big] + 3 \mathbb{E}\Big[\Big( \mathcal{A}_{s,t}^{\delta'}(H_\varepsilon) - \mathcal{A}_{s,t}^{\delta'}(H) \Big)^2\Big]\\
\leq & C(\delta) \|\nabla(H - H_\varepsilon)\|_{L^2([0,1])}^2 + C (t - s) \delta \|\nabla H_\varepsilon\|_{L^2([0,1])}^2
+ C(\delta') \|\nabla(H_\varepsilon - H)\|_{L^2([0,1])}^2,
\end{align*}
where in the last inequality we used the definition of $\mathcal{A}_{s,t}^\delta$ and \eqref{cond-ene}. Let $\varepsilon \to 0$, then the last line is bound by 
\[ C (t - s) \delta \|\nabla H\|_{L^2([0,1])}^2. \]
Therefore, $\mathcal{A}_t^\delta(H) := \mathcal{A}_{0,t}^\delta(H)$ is a Cauchy sequence in $\mathcal{B}$ and we can define the limiting process
\[ \mathcal{A}_t(H) := \lim\limits_{\delta \to 0} \mathcal{A}_t^\delta(H) \qquad \textrm{in}\; L^2 (\P), \quad \forall\, H \in \tilde{\mathcal{S}}.\]

Now, we prove that \[ \lim_{\varepsilon \to 0} \mathcal{A}_t(H_\varepsilon) = \mathcal{A}_t(H)\] in $L^2(\mathbb{P})$. For any $\delta > 0$, by Cauchy-Schwarz inequality,
\begin{align*}
&\mathbb{E}\Big[\big( \mathcal{A}_t(H_\varepsilon) - \mathcal{A}_t(H) \big)^2\Big] \\
& \leq 3 \mathbb{E}\Big[\big( \mathcal{A}_t(H_\varepsilon) - \mathcal{A}^\delta_t(H_\varepsilon) \big)^2\Big] + 3 \mathbb{E}\Big[\big( \mathcal{A}_t^\delta (H_\varepsilon) - \mathcal{A}_t^\delta (H) \big)^2\Big] + 3 \mathbb{E}\Big[\big( \mathcal{A}_t^\delta (H) - \mathcal{A}_t(H) \big)^2\Big]\\
& \leq C t \delta \|\nabla H_\varepsilon\|_{L^2([0,1])}^2 + C(\delta) \|\nabla (H_\varepsilon - H)\|_{L^2([0,1])}^2 + o_\delta(1),
\end{align*}
where $o_\delta (1) \rightarrow 0$ as $\delta \rightarrow 0$. Letting $\varepsilon \to 0$ first and then $\delta \to 0$, we conclude the proof.
\end{proof}

Define  functions $a, \phi, \tilde{\phi}: \mathbb{R} \rightarrow \mathbb{R}$ as
\begin{align*}
& a(u) = c e^{-\frac{1}{u(1 - u)}}\textbf{1}_{(0,1)}(u), \qquad c = \Big(\int_0^1 e^{-\frac{1}{u(1-u)}} du\Big)^{-1},\\
& \tilde{\phi}(u) = \int_0^u a(t) dt, \qquad \phi(u) = 1 - \int_0^u a(t) dt.
\end{align*}
For any $\beta \in (0,1)$ and $\alpha > (1 - \beta)^{-1}$, let
\[ \phi_{\alpha, \beta}(u) = \phi(\alpha(u - \beta)), \qquad \psi_{\alpha, \beta}(u) = u\phi_{\alpha,\beta}(u). \]
Let $\tilde{\psi}_{\alpha, \beta} (u)= \psi_{\alpha, \beta} (1-u)$, $u \in [0,1]$.

\begin{lemma}

Suppose \eqref{cond-bu} is satisfied. Giving $\beta \in (0,1)$ and $\alpha > (1-\beta)^{-1}$, we have that for $H \in \{\psi_{\alpha, \beta}, \tilde{\psi}_{\alpha, \beta}\}$,
\[ \tilde{\mc{M}}_t(H) = \mathcal{Y}_t(H) - \mathcal{Y}_0(H) - A\int_0^t \mathcal{Y}_s(\Delta H) ds - B \mathcal{A}_t(H)\]
is a continuous martingale with quadratic variation $tD\|\nabla H\|_{L^2}^2$. Moreover, $\mathcal{Y}_t$ satisfies the energy estimate: for any $\delta^\prime < \delta$,
\[ \mathbb{E}\Big[\Big( \mathcal{A}_{s,t}^\delta(H) - \mathcal{A}_{s,t}^{\delta'}(H) \Big)^2\Big] \leq C (t - s) \delta \|\nabla H\|_{L^2([0,1])}^2. \]

\end{lemma}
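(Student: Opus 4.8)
The plan is to mirror the approximation scheme of the preceding lemma, the single genuinely new point being that $\psi_{\alpha,\beta}$ admits no approximation in $\mathcal{H}^2([0,1])$ by elements of $\mathcal{S}$ --- indeed $\nabla\psi_{\alpha,\beta}(0)=\phi_{\alpha,\beta}(0)=1\neq0$ --- so the term $\mathcal{Y}_s(\Delta H)$ must be handled by hand, and this is precisely where \eqref{cond-bu} enters. I treat only $H=\psi_{\alpha,\beta}$; the case $H=\tilde\psi_{\alpha,\beta}(u)=\psi_{\alpha,\beta}(1-u)$ is identical with the boundary point $0$ replaced by $1$. First I would introduce, for $0<\varepsilon<\beta$, the function $\psi_{\alpha,\beta}^\varepsilon(u):=\psi_{\alpha,\beta}(u)\,\tilde\phi(u/\varepsilon)$. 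Since $\tilde\phi$ vanishes to infinite order at $0$ and is $\equiv1$ on $[1,\infty)$, while $\psi_{\alpha,\beta}\equiv0$ on $[\beta+\alpha^{-1},1]$ and $\psi_{\alpha,\beta}(u)=u$ on $[0,\beta]$, one has $\psi_{\alpha,\beta}^\varepsilon\in\mathcal{S}$, $\psi_{\alpha,\beta}^\varepsilon=\psi_{\alpha,\beta}$ on $[\varepsilon,1]$, and the remainder $g_\varepsilon:=\psi_{\alpha,\beta}-\psi_{\alpha,\beta}^\varepsilon=u\,(1-\tilde\phi(u/\varepsilon))$ is supported in $[0,\varepsilon]$ with $\|g_\varepsilon\|_{L^2([0,1])}^2\le C\varepsilon^3$, $\|\nabla g_\varepsilon\|_{L^2([0,1])}^2\le C\varepsilon$, $\|\nabla g_\varepsilon\|_{L^1([0,1])}\le C\varepsilon$, and $-\Delta g_\varepsilon(u)=\varepsilon^{-1}\rho(u/\varepsilon)$ where $\rho(v)=2\tilde\phi'(v)+v\tilde\phi''(v)$, so that $\int_0^1\Delta g_\varepsilon\,du=-1$ and $\|\Delta g_\varepsilon\|_{L^\infty}\le C\varepsilon^{-1}$.

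Since $\psi_{\alpha,\beta}^\varepsilon\in\mathcal{S}$, Definition \ref{def burgers 2}(3) yields that $\mathcal{M}_t(\psi_{\alpha,\beta}^\varepsilon)$ is a continuous martingale with quadratic variation $tD\|\nabla\psi_{\alpha,\beta}^\varepsilon\|_{L^2([0,1])}^2$. Both $\psi_{\alpha,\beta}$ and $g_\varepsilon$ lie in $\mathcal{H}^1([0,1])$ and vanish at $\{0,1\}$, so, exactly as in the preceding lemma (by $\mathcal{H}^1$-approximation by elements of $\mathcal{S}$), $\mathcal{Y}_t(\cdot)$, $\mathcal{A}_t(\cdot)$ and the energy estimate extend to them; using that $\mathcal{Y}_t$ is linear on $L^2([0,1])$ and $\mathcal{A}_{s,t}^\delta$ is linear in $\nabla(\cdot)$, I would rewrite $\mathcal{M}_t(\psi_{\alpha,\beta}^\varepsilon)=\tilde{\mathcal{M}}_t(\psi_{\alpha,\beta})-R_t^\varepsilon$, where $\tilde{\mathcal{M}}_t(\psi_{\alpha,\beta})$ is the process in the statement and $R_t^\varepsilon:=\mathcal{Y}_t(g_\varepsilon)-\mathcal{Y}_0(g_\varepsilon)-A\int_0^t\mathcal{Y}_s(\Delta g_\varepsilon)\,ds-B\mathcal{A}_t(g_\varepsilon)$. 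For $\varepsilon'<\varepsilon$ the difference $\mathcal{M}_\cdot(\psi_{\alpha,\beta}^\varepsilon)-\mathcal{M}_\cdot(\psi_{\alpha,\beta}^{\varepsilon'})=\mathcal{M}_\cdot(g_{\varepsilon'}-g_\varepsilon)$ is a continuous martingale of quadratic variation $\le tD(\|\nabla g_\varepsilon\|+\|\nabla g_{\varepsilon'}\|)^2\le C(\varepsilon+\varepsilon')$, so Doob's $L^2$ inequality makes $\{\mathcal{M}_\cdot(\psi_{\alpha,\beta}^\varepsilon)\}_\varepsilon$ Cauchy in $\mathcal{B}$, converging there to a continuous martingale with quadratic variation $t\mapsto tD\|\nabla\psi_{\alpha,\beta}\|_{L^2([0,1])}^2$. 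It then remains to show $R_t^\varepsilon\to0$ in probability for each $t$. The terms $\mathcal{Y}_t(g_\varepsilon),\mathcal{Y}_0(g_\varepsilon)$ vanish in $L^2(\mathbb{P})$ since $\mathbb{E}[\mathcal{Y}_t(g_\varepsilon)^2]=\tfrac{D}{2A}\|g_\varepsilon\|_{L^2}^2\le C\varepsilon^3$; and $\mathcal{A}_t(g_\varepsilon)\to0$ in probability because telescoping the energy estimate over dyadic scales gives $\mathbb{E}[(\mathcal{A}_t(g_\varepsilon)-\mathcal{A}_{0,t}^1(g_\varepsilon))^2]\le Ct\|\nabla g_\varepsilon\|_{L^2}^2\le Ct\varepsilon$, while $\mathbb{E}[|\mathcal{A}_{0,t}^1(g_\varepsilon)|]\le \tfrac{D}{2A}\,t\,\|\nabla g_\varepsilon\|_{L^1}\le Ct\varepsilon$.

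The hard part, and the step I expect to be the main obstacle, is $\int_0^t\mathcal{Y}_s(\Delta g_\varepsilon)\,ds\to0$: this does not follow from $L^2$-continuity of $\mathcal{Y}_s$ because $\|\Delta g_\varepsilon\|_{L^2}\to\infty$. The idea is to realise $\Delta g_\varepsilon$ as a signed mixture of the kernels $\iota_{\cdot,0}$ appearing in \eqref{cond-bu}. Since $\rho(1)=0$ and $\int_0^1\rho=1$, one checks the identity in $L^2([0,1])$
\[-\Delta g_\varepsilon=\int_0^1 w(r)\,\iota_{r\varepsilon,0}\,dr,\qquad w(r):=-r\rho'(r)\in L^1([0,1]),\quad \int_0^1 w(r)\,dr=1,\]
using $\int_0^1 w(r)\,\iota_{r\varepsilon,0}(u)\,dr=\varepsilon^{-1}\int_{u/\varepsilon}^1(-\rho'(r))\,dr=\varepsilon^{-1}\rho(u/\varepsilon)$. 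Hence $\int_0^t\mathcal{Y}_s(\Delta g_\varepsilon)\,ds=-\int_0^1 w(r)\big(\int_0^t\mathcal{Y}_s(\iota_{r\varepsilon,0})\,ds\big)dr$, and by Minkowski's inequality
\[\Big(\mathbb{E}\Big[\sup_{0\le t\le T}\Big(\int_0^t\mathcal{Y}_s(\Delta g_\varepsilon)\,ds\Big)^2\Big]\Big)^{1/2}\le\int_0^1|w(r)|\,\Big(\mathbb{E}\Big[\sup_{0\le t\le T}\Big(\int_0^t\mathcal{Y}_s(\iota_{r\varepsilon,0})\,ds\Big)^2\Big]\Big)^{1/2}dr,\]
which tends to $0$ as $\varepsilon\to0$ by \eqref{cond-bu} at $u=0$ (which also furnishes, being bounded near $\delta=0$, the domination needed for the limit in $r$).

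Putting the pieces together gives $R_t^\varepsilon\to0$, so $\tilde{\mathcal{M}}_t(\psi_{\alpha,\beta})$ coincides with the $\mathcal{B}$-limiting martingale and is therefore a continuous martingale with quadratic variation $tD\|\nabla\psi_{\alpha,\beta}\|_{L^2([0,1])}^2$. The energy estimate $\mathbb{E}[(\mathcal{A}_{s,t}^\delta(\psi_{\alpha,\beta})-\mathcal{A}_{s,t}^{\delta'}(\psi_{\alpha,\beta}))^2]\le C(t-s)\delta\|\nabla\psi_{\alpha,\beta}\|_{L^2([0,1])}^2$ then follows verbatim from the argument of the preceding lemma: since $\psi_{\alpha,\beta}^\varepsilon\to\psi_{\alpha,\beta}$ in $\mathcal{H}^1([0,1])$, one inserts $\psi_{\alpha,\beta}^\varepsilon$ by the triangle inequality, bounds the middle term by \eqref{cond-ene} and the outer terms by the definition of $\mathcal{A}_{s,t}^\delta$, and lets $\varepsilon\to0$. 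The same argument gives the claim for $H=\tilde\psi_{\alpha,\beta}$, now approximating near $u=1$ and invoking \eqref{cond-bu} at $u=1$. The only new input beyond the preceding lemma is the control of the divergent Laplacian contribution $\int_0^t\mathcal{Y}_s(\Delta g_\varepsilon)\,ds$; this is the mechanism by which the boundary behaviour \eqref{cond-bu} propagates from $\mathcal{S}$ to the functions $\psi_{\alpha,\beta}$ and $\tilde\psi_{\alpha,\beta}$.
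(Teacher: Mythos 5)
Your argument is correct and follows the paper's strategy in outline --- approximate $\psi_{\alpha,\beta}$ by functions whose derivative vanishes at the boundary, pass to the limit in the martingale problem, and use \eqref{cond-bu} to kill the divergent Laplacian contribution coming from the correction near $u=0$ (resp.\ $u=1$) --- but your implementation differs at the key technical point. The paper takes the piecewise-quadratic profile $h_\varepsilon$ (with $h_\varepsilon(u)=u^2/(2\varepsilon)$ on $[0,\varepsilon]$) and approximates by $\psi_\varepsilon=h_\varepsilon\phi_{\alpha,\beta}\in\tilde{\mathcal{S}}$, so that $h_\varepsilon''=\iota_{\varepsilon,0}$ and, since $\phi_{\alpha,\beta}\equiv1$ on $[0,\beta]$, the singular part of $\Delta\psi_\varepsilon$ is \emph{exactly} $\iota_{\varepsilon,0}$; condition \eqref{cond-bu} then applies directly, at the cost of routing the argument through the preceding lemma's extension to $\tilde{\mathcal{S}}$. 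You instead use the smooth cutoff $\psi_{\alpha,\beta}\,\tilde\phi(\cdot/\varepsilon)$, which lands in $\mathcal{S}$ itself (so Definition \ref{def burgers 2} applies verbatim, and only the $\mathcal{H}^1$-type extension of $\mathcal{A}_t$ from the preceding lemma is needed), but then $-\Delta g_\varepsilon=\varepsilon^{-1}\rho(\cdot/\varepsilon)$ is no longer a single kernel and you must resolve it as the signed mixture $\int_0^1 w(r)\,\iota_{r\varepsilon,0}\,dr$ with $w(r)=-r\rho'(r)$, handled by Minkowski plus dominated convergence (the domination you invoke is indeed available: the quantity in \eqref{cond-bu} is finite for each fixed width by stationarity and tends to $0$, hence is bounded on $(0,1]$). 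That mixture identity checks out ($\rho(1)=0$, $\int_0^1\rho=1$), as do your estimates on $g_\varepsilon$, the Cauchy property of $\mathcal{M}_\cdot(\psi^\varepsilon_{\alpha,\beta})$ in $\mathcal{B}$, the treatment of $\mathcal{A}_t(g_\varepsilon)$, and the transfer of the energy estimate. Two cosmetic remarks: your use of the top scale $\delta_0=1$ in $\mathcal{A}^1_{0,t}(g_\varepsilon)$ sits at the boundary of the range $\delta\in(0,1)$ allowed in Definition \ref{def burgers 2}(2), so take e.g.\ $\delta_0=1/2$; and the interchange $\mathcal{Y}_s(\Delta g_\varepsilon)=-\int_0^1 w(r)\mathcal{Y}_s(\iota_{r\varepsilon,0})\,dr$ deserves a sentence noting that the $L^2([0,1])$-extension $H\mapsto\mathcal{Y}_\cdot(H)$ is a bounded linear map and hence commutes with the Bochner integral. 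In short: correct, same mechanism as the paper, with a smoother cutoff that trades the paper's exact identity $h_\varepsilon''=\iota_{\varepsilon,0}$ for your mixture-of-kernels step.
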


\begin{proof}

For $\varepsilon > 0$, define 
\[ h_\varepsilon(u) = \begin{cases}
\, \frac{u^2}{2\varepsilon}, \qquad & u \in [0, \varepsilon],\\
\, u - \frac{\varepsilon}{2}, \qquad & u \in [\varepsilon, 1].
\end{cases}
 \]
Then, $\psi_\varepsilon = h_\varepsilon \phi_{\alpha, \beta} \in \tilde{\mathcal{S}}$ and $\psi_\varepsilon^{(k)} \rightarrow \psi_{\alpha, \beta}^{(k)}$ in $L^2$ for $k = 0,1$ as $\varepsilon \to 0$. So 
\[ \tilde{\mc{M}}_t(\psi_\varepsilon) = \mathcal{Y}_t(\psi_\varepsilon) - \mathcal{Y}_0(\psi_\varepsilon) - A\int_0^t \mathcal{Y}_s(\Delta \psi_\varepsilon) ds - B\mathcal{A}_t(\psi_\varepsilon) \]
is a continuous martingale with quadratic variation $t \to tD\|\nabla H\|_{L^2}^2$. Define \[\mathcal{A}_t(\psi_{\alpha, \beta}) = \lim_{\delta \to 0} \mathcal{A}_t^\delta(\psi_{\alpha, \beta}).\] 
Since $\mathcal{A}_{t}^\delta(\psi_\varepsilon)$ satisfies the energy estimate, similar arguments as above show that $\mathcal{A}_t(\psi_\varepsilon) \rightarrow \mathcal{A}_t(\psi_{\alpha, \beta})$ in $\mathcal{B}$. Since $\psi_\varepsilon^{(k)} \rightarrow \psi_{\alpha, \beta}^{(k)}$ for $k = 0,1$,one could check that the martingale $\tilde{\mc{M}}_\cdot (\psi_\varepsilon)$, as $\varepsilon \rightarrow 0$, converges to a martingale denote by $\tilde{\mc{M}}_\cdot (\psi)$, and by  \eqref{cond-bu}, $\mathcal{Y}_\cdot (\psi_\varepsilon)$ converges to $\mathcal{Y}_\cdot (\psi)$ in $\mathcal{B}$. 

The similar results hold for $\tilde{\psi}_{\alpha, \beta}$. Thus, 
\[ \tilde{\mc{M}}_t(H) = \mathcal{Y}_t(H) - \mathcal{Y}_0(H) - A\int_0^t \mathcal{Y}_s(\Delta H) ds - B\mathcal{A}_t(H), \quad H \in \{\psi_{\alpha, \beta}, \tilde{\psi}_{\alpha, \beta}\} \]
is a continuous martingale with quadratic variation $tD\|\nabla H\|_{L^2}^2$, and $\mathcal{Y}_t$ satisfies the energy estimate, which concludes the proof.
\end{proof}

\begin{proof}[Proof of Proposition \ref{pro: definitions equivalent}]
For any $H \in \mathcal{S}_{\rm Dir}$, it can be written as
\[ H = H'(0)\psi_{\alpha, \beta} - H'(1)\tilde{\psi}_{\alpha, \beta} + G_{\alpha, \beta}, \]
where $G_{\alpha, \beta} := H - H'(0)\psi_{\alpha, \beta} + \psi'(1)\tilde{\psi}_{\alpha, \beta}$.  One could check directly that $G_{\alpha, \beta} \in \tilde{S}$. By the last two lemmas, \[\tilde{\mc{M}}^{\alpha, \beta}_t (H) := H'(0) \tilde{\mc{M}}_t (\psi_{\alpha, \beta}) - H'(1) \tilde{\mc{M}}_t (\tilde{\psi}_{\alpha, \beta}) + \tilde{\mc{M}}_t(G_{\alpha, \beta})\] is a continuous martingale,
 and \[\mathcal{A}_t(H) := H'(0) \mathcal{A}_t(\psi_{\alpha, \beta}) - H'(1) \mathcal{A}_t(\tilde{\psi}_{\alpha, \beta}) + \mathcal{A}_t(G_{\alpha, \beta})\]
is well defined. This implies that
\[ \tilde{\mc{M}}_t^{\alpha, \beta}(H) = \mathcal{Y}_t(H) - \mathcal{Y}_0(H) - A\int_0^t \mathcal{Y}_s(\Delta H) ds - B\mathcal{A}_t(H). \]

Let $\beta = 1 / \alpha$, then one could check directly that $\|\nabla \psi_{\alpha, \alpha^{-1}}\|_{L^2 ([0,1])}$ and  $\|\nabla \tilde{\psi}_{\alpha, \alpha^{-1}}\|_{L^2 ([0,1])}$ vanish as $\alpha \to \infty$.  In particular, \[\lim_{\alpha \to \infty}\|\nabla (G_{\alpha, \beta} - H)\|_{L^2 ([0,1])} = 0.\]
Using Doob's inequality, it is easy to see that $\{\tilde{\mc{M}}^{\alpha, \alpha^{-1}}_t, t \in [0,T]\}_{\alpha > 0}$ is a Cauchy sequence in $\mathcal{B}$, and thus it converges to a martingale denoted by $\tilde{\mc{M}}_\cdot (H)$ whose quadratic variation is given by 
\[\langle \tilde{\mc{M}}(H) \rangle_t = \lim_{\alpha \to \infty} \langle \tilde{\mc{M}}^{\alpha, \alpha^{-1}}(H) \rangle_t = t D \|\nabla H\|_{L^2}^2.\]
This concludes the proof.
\end{proof}

\bibliographystyle{plain}
\bibliography{zhaoreference.bib}

\end{document}